\theoremstyle{theorem}
\newtheorem{theorem}{Theorem}
\newtheorem{corollary}[theorem]{Corollary}
\newtheorem{proposition}[theorem]{Proposition}
\newtheorem{lemma}[theorem]{Lemma}
\theoremstyle{definition}
\newtheorem{definition}[theorem]{Definition}
\newtheorem{remark}[theorem]{Remark}
\def\RR{\mathcal{R}}
\def\R{\mathbb{R}}
\def\L{\mathcal{L}}
\def\D{\mathcal{D}}
\def\N{\mathbb{N}}
\def\Mat{\mathrm{Mat}}
\def\D{\mathcal{D}}
\def\Rh{\mathfrak{Rh}}
\def\L{\mathfrak{L}}
\numberwithin{theorem}{section}
\begin{document}

\title{Tight Complexity Bounds for Diagram Commutativity Verification}
\thanks{This work is an output of a research project implemented as part of the Basic Research Program at HSE University}
\author{Artem Malko}
\address{A. Malko: \newline National Research University Higher School of Economics, Moscow 101000, Russia}
\email{metraoklam@gmail.com}

\author{Igor Spiridonov}
\address{I. Spiridonov: \newline National Research University Higher School of Economics, Moscow 119048, Russia}
\email{spiridonovia@ya.ru}

\maketitle

\begin{abstract}
    A diagram \(\mathcal{D} = (G, l)\) over a monoid \(M\) is an oriented graph \(G = (V, E)\) endowed with a labeling \(l\colon E \to M\). A diagram is commutative if and only if for any two oriented paths with the same endpoints, the products in \(M\) of their edge labels coincide.
    We propose the first asymptotically optimal algorithm for diagram commutativity verification applicable to all graph families. For graphs with \(\lvert V\rvert \preceq \lvert E\rvert \preceq \lvert V\rvert^2\), which covers most practically relevant cases, our algorithm runs in
    \[
      O\bigl(|V|\,|E|\bigr) \cdot \bigl(T_{\mathrm{equal}} + T_{\mathrm{multi}}\bigr)
    \]
    time; here \(T_{\mathrm{equal}}\) and \(T_{\mathrm{multi}}\) denote the times to perform an equality check and a multiplication in \(M\), respectively.
    We also establish new lower bounds on the numbers of equality checks and multiplications necessary for commutativity verification, which asymptotically match our algorithm’s cost and thus prove its tightness.
\end{abstract}

\section{Introduction}

\subsection{Results overview}
Verifying diagram commutativity is a foundational algorithmic problem in theoretical computer science and combinatorics. The best-known result in this area is the Murota’s algorithm \cite[Theorem 5.2]{murota84}, which applies only to acyclic graphs and requires \(O(|V|^2|E|)\) equality checks. Modern implementations of this algorithm can be found in \cite{kabra20}.

We propose an asymptotically optimal algorithm (see Section \ref{S6}) for diagram commutativity verification that runs in
\begin{equation}\label{eq:complexity}
  \begin{aligned}[c]
    &\hspace{40pt}%
     O\bigl(\min(|V|^2,|E|) \cdot \min(|V|,|E|)+|E|\bigr)\;T_{\mathrm{equal}}+
    \\[-1pt]
    &\hspace{40pt}%
     O\bigl(\min(|V|^2,|E|) \cdot \min(|V|,|E|)\bigr)\;T_{\mathrm{multi}} + O(|V|),
  \end{aligned}
\end{equation}
where \(T_{\mathrm{equal}}\) and \(T_{\mathrm{multi}}\) denote the times to perform an equality check and a multiplication in \(M\), respectively, see Theorem \ref{full_comlexity}.
For the most important graph families -- namely those satisfying \(\lvert V\rvert \preceq \lvert E\rvert \preceq \lvert V\rvert^2\) -- the running time (\ref{eq:complexity}) automatically simplifies to
    \[
    O \left(|V|\,|E|\bigr) \cdot \bigl(T_{\mathrm{equal}} + T_{\mathrm{multi}}\right),
    \]
which strictly improves on the \(O(|V|^2\,|E|)\) bound of all previously known algorithms. Moreover, the exact number of equality checks performed by our algorithm is bounded from above by
\begin{equation}\label{eq:num_checks}
    \min\bigl(|V|^2,|E|\bigr) \cdot \min\bigl(|V|,|E|+1\bigr) + |E|,
\end{equation}
and the exact number of multiplications is bounded from above by
\begin{equation}\label{eq:num_mult}
    \min\bigl(|V|^2,|E|\bigr) \cdot \min\bigl(|V|,|E|+1\bigr),
\end{equation}
see Theorems \ref{algo_eq} and \ref{algo_mult}, respectively.

To complement our upper‐bound analysis, we derive the first exact mathematical lower bounds on the number of elementary operations -- equality checks and multiplications -- required to verify commutativity of a diagram over an arbitrary graph \(G\) with \(\lvert V\rvert\) vertices and \(\lvert E\rvert\) edges.  In particular, we prove that any verification procedure must perform at least
\begin{equation}\label{eq:bound_checks}
    \Omega \left( \min( |V|^2, |E|) \cdot \min(|V|, |E|) + |E| \right)
\end{equation}
equality checks and at least
\begin{equation}\label{eq:bound_mult}
    \Omega \left( \min(|V|^2, |E|) \cdot \min(|V|, |E|) \right)
\end{equation}
multiplications in the worst case, see Theorems \ref{lower_bound} and \ref{lower_bound_mult}, respectively. The hidden constant in both \(\Omega\)-notations can be taken as \(2^{-14}\).

Note that each lower‐bound in \eqref{eq:bound_checks} and \eqref{eq:bound_mult} matches the exact operation counts in \eqref{eq:num_checks} and \eqref{eq:num_mult} up to the universal constant factor. Moreover, by inspecting the bound in \eqref{eq:complexity}, we note that, aside from the equality‐check and multiplication operations, the algorithm performs only
\begin{equation}\label{eq:ram_ops}
O\bigl(\min(|V|^2,|E|) \cdot \min(|V|,|E|)+|E|\bigr) + O(|V|)
\end{equation}
additional basic RAM‐model operations (e.g., indexing, branching, assignment), so that its overall running time obeys the same asymptotic bound. Consequently, these bounds are tight and the algorithm is asymptotically optimal.

In this framework, for an oriented graph \(G = (V, E)\), the minimal number of equality checks is defined as the smallest size of a system of equations between products of edge labels (each product taken over an arbitrary sequence of edges, not necessarily forming a path) whose joint satisfaction guarantees diagram commutativity. We call this the \emph{commutativity rank} \(\eta(G)\) of \(G\), see Definition \ref{def_com_rank}. Similarly, the minimal number of multiplications is the least number of monoid multiplications required to build all left- and right-hand side products appearing in those equations. We call this the \emph{multiplication rank} \(\nu(G)\) of \(G\), see Definition \ref{def_mult_rank}. The algorithm we propose constructs and verifies a family of such equations, proving that they can be constructed in (\ref{eq:ram_ops}) time for any graph \(G = (V, E)\).
Thus, our algorithm provides upper bounds on \(\eta(G)\) and \(\nu(G)\) in terms of \(\lvert V\rvert\) and \(\lvert E\rvert\), see Theorems \ref{upper_bound} and \ref{upper_bound_mult}. 

Crucially, the algorithm only ever multiplies labels along actual paths -- that is, it uses compositions of labels corresponding to consecutive, composable edges. The lower bounds on commutativity and multiplication ranks are proved in the more permissive model that allows arbitrary formal products of edge labels; restricting to a categorical setting, where only composable arrows admit multiplication, can only shrink the space of allowed products, so any commutativity certificate or multiplication scheme there still satisfies the same rank bounds. Hence, the asymptotic lower bounds and the algorithm's optimality carry over immediately to diagrams valued in categories, since the algorithm's operations remain valid and the cost measures can only increase under this restriction.

We work in the full generality of an arbitrary (possibly non-commutative and without inverses) monoid as the target space, thus covering matrix semirings, groups, and most algebraic structure arising in theory or applications. Moreover, our algorithm applies to all oriented graphs -- including those with cycles, multiple edges, loops, and arbitrary connectivity patterns -- without any acyclicity or connectivity assumptions. The only minor technical restriction, used only for the lower bounds, is that $|V|,|E| \geq 4$.

\subsection{Motivation} 
Ever since Eilenberg and Mac Lane’s 1945 landmark ``General Theory of Natural Equivalences''~\cite{eilenberg_maclane45}, commutative diagrams have become a unifying language across mathematics. In the decade that followed, Cartan–Eilenberg’s ``Homological Algebra''~\cite{cartan_eilenberg56} and Gabriel–Zisman’s ``Calculus of Fractions''~\cite{gabriel_zisman67} turned tedious chain-level proofs into systematic arrow-chasing. Grothendieck’s 1957 contributions~\cite{grothendieck57} advanced the art further by defining abelian, fibered, and eventually derived categories entirely through the commutativity of cubes and prisms of morphisms. Lawvere’s 1964 work~\cite{lawvere64} revealed that logical deduction itself is diagrammatic, with quantifiers and implication arising as universal pullbacks and exponentials. By the early 1970s, Scott’s domain-theoretic framework~\cite{scott72} supplied a diagrammatic semantics for the untyped~\(\lambda\)-calculus, while Lambek’s insights~\cite{lambek58} showed that the syntax of natural language can be read from compact-closed string diagrams. Whether one is ensuring that locally defined objects fit together, defining pushouts for programming-language semantics, or studying compositionality in cognitive science, today’s work still comes down to diagram chasing.

As a physical example, in an oriented electrical network with edges labeled by voltage drops (composition as addition), diagram commutativity is equivalent to Kirchhoff’s voltage law -- the total drop around any closed loop is zero.

A further motivation comes from the fact that for any poset \(S\) and any category \(\mathcal{C}\) with finite limits, the category of \(\mathcal{C}\)-valued sheaves on \(S\), endowed with the Alexandrov topology, is equivalent to the category of commutative diagrams in \(\mathcal{C}\) indexed by \(S\)~\cite{curry}. Thanks to this correspondence, verifying diagram commutativity has become an important practical task in computer science, driven by the growing use of sheaf-theoretic frameworks in machine learning and topological data analysis~\cite{ayzenberg25}. Notable examples include heat diffusion on a sheaf~\cite{ghrist19}, sheaf learning~\cite{hancen_ghrist19}, message passing on sheaves~\cite{bodnar20}, sheaf attention~\cite{barbero22}, and simplification of finite sheaved spaces~\cite{malko24}.
     
The problem of diagram commutativity verification has been studied from several perspectives. In combinatorial graph theory, Kainen~\cite{kainen12} introduced the notion of a robust cycle basis to reduce commutativity checks to labels on a specialized cycle basis; Hammack and Kainen~\cite{hammackKainen18} subsequently extended this framework to a broad class of graphs, providing a detailed analysis of the complexity of constructing such bases. In a purely algorithmic setting, Murota~\cite[Theorem 5.2]{murota84} proposed the first algorithm with an explicit asymptotic analysis for acyclic graphs, achieving an \(O(|V|^2|E|)\) bound on equality verifications. Namely, the following result was obtained.
\begin{theorem}\cite[Theorem 5.2]{murota84}\label{murota_thm}
    The commutativity of an acyclic diagram $(G = (V, E), l)$ where $l: E \to F$ can be checked in $O(T_{mult}|V|^2|E|)$ if one multiplication operation in the semigroup $F$ can be done in a constant time $T_{mult}$.
\end{theorem}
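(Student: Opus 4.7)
The plan is to exploit acyclicity to collapse the potentially exponential family of path-products to a linear-size collection of canonical representatives, and then certify agreement of these representatives one source at a time. First, I would compute a topological order $v_1 \prec v_2 \prec \cdots \prec v_n$ of $V$, so that every edge of $G$ points from a smaller vertex to a larger one; this costs $O(|V|+|E|)$ in the RAM model.

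Next, for each vertex $s\in V$ considered in turn as a potential source, I would maintain a partial map $P_s$ on $V$ and process the remaining vertices in topological order, sweeping edge by edge. When an edge $(u,v)$ with label $l(u,v)$ is encountered and $P_s(u)$ is already defined, I form the candidate $c := P_s(u)\cdot l(u,v)$ at cost $T_{mult}$; if $P_s(v)$ is undefined I assign $P_s(v):=c$, otherwise I perform one equality check $P_s(v)\stackrel{?}{=}c$ and declare the diagram non-commutative on failure. If no check ever fails across all sources, I declare the diagram commutative.

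Correctness has two directions. The easy direction is necessity: any pair of co-terminal paths with distinct products will, when their common initial vertex is chosen as $s$, generate a conflicting assignment to some $P_s(v)$. For sufficiency I would induct on the topological rank of $v$: given the inductive hypothesis ``for every $w\prec v$, every directed path from $s$ to $w$ has product $P_s(w)$,'' any path from $s$ to $v$ factors through some in-neighbor $u$ of $v$ and thus has product $P_s(u)\cdot l(u,v)$, which the checks performed at $v$ force to equal $P_s(v)$. Since every pair of co-terminal paths automatically shares a source, iterating the procedure over all $s\in V$ certifies every pairwise equality required by commutativity.

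For the complexity bookkeeping, a single source $s$ incurs at most one multiplication and one equality check per edge, giving cost $O(T_{mult}\cdot|E|)$; summing over the $|V|$ possible sources yields $O(T_{mult}\cdot|V|\cdot|E|)$, which is actually slightly sharper than the stated $O(T_{mult}\cdot|V|^2\cdot|E|)$ bound but certainly suffices to establish it. The main obstacle I anticipate is a subtle correctness point: one must range $s$ over \emph{every} vertex of $V$, not only over vertices that are topologically minimal in $G$, because two co-terminal paths may share a common source that is itself not a source of $G$; overlooking this would leave genuinely non-commutative diagrams undetected and is the place where the argument most easily goes wrong.
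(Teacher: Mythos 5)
This theorem is cited by the paper from Murota (1984), not proved in the paper itself; the Introduction notes that Murota's original argument proceeds via matroid theory and the notion of a homotopy base. Your proposal is therefore a reproof by a genuinely different and more elementary route: a topological-order sweep from each vertex. This is essentially the same idea as the paper's own Section~\ref{S6} algorithm (there a DFS replaces the topological sort, but the per-source propagate-and-check structure and the $|V|\cdot|E|$ accounting match Theorems~\ref{algo_mult}--\ref{full_comlexity}). Since $O(T_{mult}|V||E|) \subseteq O(T_{mult}|V|^2|E|)$, the stated bound certainly follows, and your argument in fact delivers the strict improvement over Murota's bound that the paper advertises as its own contribution; you should be aware you have reproved a weaker statement by the paper's new method rather than reconstructing the cited proof.

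Two points need repair before the argument is airtight. First, you never say how $P_s$ acquires its first entry: the sweep must be seeded with $P_s(s)$ set to an identity element, and the base case of your induction on topological rank should be the empty path at $s$. Second, and more substantively, Murota's $F$ is only a semigroup, so there may be no identity to seed with. The standard fix is to adjoin a formal identity and pass to the monoid $F^1 = F \sqcup \{1\}$, seed $P_s(s) := 1$, and run the sweep over $F^1$. Acyclicity guarantees that the only path from $s$ to $s$ is the empty one, so the algorithm never demands a spurious equality of the form $1 = x$ with $x \in F$, and the equalities it does check are exactly those between co-terminal paths of positive length -- which is precisely commutativity over $F$. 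With these two additions the proposal is complete and correct, and the vague ``necessity'' paragraph can be dropped, since it is just the contrapositive of your (correct) induction for soundness.
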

Murota’s proof is based on the technique of matroids and uses the notion of a homotopy base~\cite{murotaFujishige87,murota84}. Note that Theorem~\ref{murota_thm} applies only to acyclic diagrams but works over semigroups rather than monoids. Implementations of this algorithm can be found in~\cite{kabra20}.

\subsection{Structure of the paper}
Section \ref{S2} introduces all necessary notation and definitions, and gives formal statements of our main theorems -- both the upper and lower bounds.

Sections \ref{S3}--\ref{S5} develop the lower‐bound side of our results. In Section \ref{S3}, we introduce the key notion of disjoint rhomboids and, for any fixed numbers of vertices and edges, construct graphs containing a sufficiently large collection of disjoint rhomboids; several technical derivations are deferred to Appendix~\ref{appendix}. In Section \ref{S4}, we develop results on the multiplication rank of graphs, which we subsequently employ in our lower‐bound analysis. Section \ref{S5} completes the lower‐bound proof. The key idea is that, in graphs containing a large number of disjoint rhomboids, commutativity cannot be verified with fewer than the asymptotically claimed numbers of multiplications and equality checks.

Sections \ref{S6}--\ref{S7} present the algorithmic side and yield the matching upper bounds. Section \ref{S6} presents the algorithm for verifying diagram commutativity and proves its correctness. In Section \ref{S7}, we compute the exact numbers of multiplications and equality checks executed by our algorithm and derive its asymptotic running time. This simultaneously confirms the algorithm’s optimality and the exactness of our lower‐bound estimates.

\subsection{Acknowledgements}
The authors are grateful to A.~Ayzenberg, M.~Babenko, V.~Gorbounov and F.~Pavutnitskiy for useful comments.

\section{Main results} \label{S2}

\subsection{Main definitions} An \textit{oriented graph} $G$ is a pair $(V, E)$ of finite nonempty sets (called vertices and edges) equipped with two functions $t, o : E \to V$ assigning to every edge its \textit{tail} and \textit{origin}, respectively.

Let $M$ be a monoid, i.e a set equipped with an associative binary operation and an identity element $\mathbf{1}_M \in M$.

\begin{definition}\label{defD}
A diagram $\D = (G, l)$ over a monoid $M$ is a oriented graph $G = (V, E)$ with a labeling $l: E \to M$ on the set of its edges.
\end{definition}

Let $\D = (G, l)$ be a diagram over a monoid $M$ and let $s = (e_1, e_2, \dots, e_k)$ be any sequence of its edges (not necessarily consecutive).
Then we set $$l(s) = l(e_1) l(e_2) \cdots l(e_k) \in M.$$
For the empty sequence $s$ we define $l(s) = \mathbf{1}_M$.

A path in a graph $G = (V, E)$ is a (possibly empty) sequence of edges $p = (e_1, e_2, \dots, e_k)$ equipped with a pair of vertices $o(p), t(p) \in V$ satisfying $t(e_i) = o(e_{i+1})$ for all $1 \leq i \leq k - 1$. If $k > 0$, then we additionally require that $o(p) = o(e_1)$ and $t(p) = t(e_k)$, otherwise the condition $o(p) = t(p)$ should be satisfied. This definition allows us to consider paths of zero length ``located'' at any vertex of $G$. For a path $p = (e_1, e_2, \dots, e_k)$, we denote
$$l(p) = l((e_1, e_2, \dots, e_k)).$$

\begin{definition}\label{defCD}
A diagram $\D = (G, l)$ over $M$ is called \textit{commutative} if for any two paths $p_1, p_2$ satisfying $o(p_1) = o(p_2)$ and $t(p_1) = t(p_2)$ the equality $l(p_1) = l(p_2)$ holds in $M$.
\end{definition}

\begin{definition} \label{system}
    Let $G = (V, E)$ be an oriented graph. A \textit{system of relations} for $G$ is a set of pairs
    $$\RR = \{(s_1, s'_1), (s_2, s'_2), \dots, (s_n, s'_n)\},$$
    where $s_i$ and $s'_i$ are sequences of (not necessarily consecutive) edges of $G$.
    The system of relations $\RR$ is called \textit{complete} if for any monoid $M$ and for any labeling $l: E \to M$ the diagram $D = (G, l)$ is commutative if and only if $l(s_i) = l(s'_i)$ holds for each $1 \leq i \leq n$.
\end{definition}

A complete system of relations $\RR$ is called \textit{minimal} if any of its proper subset $\RR' \subsetneq \RR$ is not a complete system of relations. Note that any minimal complete system of relations does not contain relations of the form $((a), (a))$ and $((), ())$, where $a \in E$ and $()$ denotes the empty edge sequence.

\begin{definition} \label{def_com_rank}
    Let $G$ be an oriented graph. Then \textit{commutativity rank} of $G$
    is defined as the number
    $$\eta(G) = \min_\RR |\RR|,$$
    where the minimum is taken over all complete system of relations for $G$.
\end{definition}

Note that $\eta(G)$ is always achieved on some minimal complete system of relations $\RR$.

The next goal is to define multiplication rank of a graph $\nu(G)$. Roughly speaking, $\nu(G)$ is the minimal amount of multiplications needed to construct a complete system of relations for $G$. In order to give a formal definition, we introduce the following notation. 

Let $S, S'$ be two finite set of edge sequences of $G$. We say that $S'$ can be obtained from $S$ by a multiplication if
$$S' = S \cup \{s \circ s'\},$$
where $s, s' \in S$ and $s \circ s'$ denotes their concatenation.
Denote by $m(S, S') \in \N \cup \{ \infty \}$ the minimal number of multiplications needed to obtain $S'$ from $S$ using multiplications.

\begin{definition} \label{def_mult_m}
    Let $G = (V, E)$ be an oriented graph and let $\RR$ be a system of relations for $G$. Consider the set
    $$S_{\RR} = \bigcup_{(s, s') \in \RR} \{s, s'\}.$$
    We define 
    $$m(\RR) = \min_{S \supseteq S_{\RR}} m(\mathcal{E}, S),$$
    where 
    $$\mathcal{E} = \left( \cup_{e \in E} \{(e)\} \right) \cup \{()\}.$$
\end{definition}

\begin{definition} \label{def_mult_rank}
    Then \textit{multiplication rank} of $G$
    is defined by
    $$\nu(G) = \min_\RR m(\RR),$$
    where the minimum is taken over all complete system of relations for $G$.
\end{definition}

\subsection{Results} In this paper, we prove both lower and upper bounds on the commutativity rank and multiplication rank of graphs. We further establish that these bounds are asymptotically tight: the lower and upper estimates differ by at most a uniform, explicit constant.

Additionally, in Section \ref{S7}, we present an explicit algorithm for diagram commutativity verification. Given a graph \(G\), our algorithm implicitly constructs and verifies a complete system of relations \(\RR\) for \(G\), achieving the upper bounds on the commutativity rank \(\eta(G)\) and the multiplication rank \(\nu(G)\), respectively, see Theorems \ref{algo_eq} and \ref{algo_mult}. 

Our main upper‐bound results are as follows.
\begin{theorem} \label{upper_bound}
For any oriented graph $G = (V, E)$ we have $$\eta(G) \leq \min(|V|^2, |E|) \cdot \min(|V|, |E| + 1) + |E|.$$
\end{theorem}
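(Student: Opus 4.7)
The plan is to construct, for any oriented graph $G = (V, E)$, an explicit complete system of relations $\RR$ with $|\RR| \le \min(|V|^2, |E|) \cdot \min(|V|, |E| + 1) + |E|$. My approach is to designate a family of canonical paths in $G$ and then enforce two types of relations that together ensure commutativity.

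First, I would fix a family of canonical paths $\{P(u,v)\}$, chosen coherently (for instance, from in- or out-arborescences) so that the representative of each reachable ordered pair $(u,v)$ lies in a tree-like substructure. The system $\RR$ is then built from two families. Family (A) contains, for every edge $e\colon u\to w$, the relation $((e),\, P(u,w))$; this contributes at most $|E|$ entries and accounts for the ``$+|E|$'' term in the bound. Family (B) contains, for every canonical path $P(u,v)$ and every edge $e\colon v\to w$ that extends it, the relation $(P(u,v)\circ (e),\, P(u,w))$, encoding the one-step compatibility of canonical paths with edges. A careful choice of canonical paths keeps the number of them bounded by $\min(|V|^2, |E|)$, and the number of useful extensions at each endpoint is bounded by the number of vertices reachable from there, which is at most $\min(|V|, |E|+1)$. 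Multiplying these yields the leading term of the bound.

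Completeness of $\RR$ is established by induction on path length $k$. The cases $k \in \{0, 1\}$ are handled by family (A) together with the convention $P(u,u) = ()$. For $k \ge 2$, a path $p = (e_1, \dots, e_k)$ from $a$ to $b$ decomposes as $p = p' \circ (e_k)$ with $p'$ of length $k-1$ terminating at $o(e_k)$; the inductive hypothesis gives $l(p') = l(P(a, o(e_k)))$, and a single relation from family (B) then yields $l(p) = l(P(a, b))$. Hence any two paths sharing endpoints have equal labels under $\RR$, which is precisely the commutativity condition of Definition~\ref{defCD}.

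The main obstacle I anticipate is the sharpness of the counting. In particular, verifying that canonical paths can be chosen so that their total number satisfies the nontrivial bound $\min(|V|^2, |E|)$ --- rather than the easier $|V|^2$ --- requires a delicate argument balancing the tree structure of the chosen arborescences against edge scarcity in sparse regimes. One must also ensure that degenerate pairs of the form $((a),(a))$ or $((),())$ are excluded from $\RR$, so that the final count matches the statement exactly; this is the step I expect to be the most technical part of the proof.
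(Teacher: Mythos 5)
Your high-level construction mirrors the paper's DFS-based proof quite closely --- canonical paths through arborescences, family (A) anchoring each edge, family (B) for one-step extensions, and completeness by induction on path length --- and the inductive completeness argument is sound. But the counting, which you yourself flag as the delicate point, does not hold as stated, and no ``careful choice of canonical paths'' can rescue it. The claim that the number of canonical paths can be kept $\le \min(|V|^2,|E|)$ is simply false: for the directed path $v_1\to v_2\to\cdots\to v_n$ one has $|E|=n-1$, yet there are $\binom{n}{2}=\Theta(n^2)$ reachable ordered pairs, each of which needs its own canonical path, so the count is $\Theta(n^2)\gg \min(n^2,n-1)=n-1$. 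Moreover, ``the number of useful extensions at each endpoint $v$ is bounded by the number of vertices reachable from $v$'' is not right either --- it is the out-degree of $v$, a local quantity with no relation to downstream reachability, and without preprocessing it can be as large as $|E|$.

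The way to make the bound go through, and what the paper actually does, is to reorganize the count around edges rather than around paths. First spend the $+|E|$ budget on removing loops (checking $l(e)=\mathbf{1}_M$) and multiple edges (checking $l(e)=l(e')$), yielding a simplified graph $G'=(V,E')$ with $|E'|\le\min(|V|^2,|E|)$. Then count family (B) as $\text{(edges)}\times\text{(sources)}$: each remaining edge $e\in E'$ contributes one relation for every vertex $u$ from which $o(e)$ is reachable, and the number of such sources is bounded by the size of the connected component containing $e$, which is at most $\min(|V|,|E'|+1)$ since a connected component on $k$ vertices uses at least $k-1$ edges. Summing over $e\in E'$ gives $|E'|\cdot\min(|V|,|E'|+1)\le\min(|V|^2,|E|)\cdot\min(|V|,|E|+1)$, which is the leading term. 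The grouping in your proposal --- outer sum over canonical paths, inner bound on extensions per endpoint --- cannot produce this bound, because the outer factor is not bounded by $\min(|V|^2,|E|)$; you must iterate over edges in the outer sum to make the arithmetic close.
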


\begin{theorem} \label{upper_bound_mult}
For any oriented graph $G = (V, E)$ we have $$\nu(G) \leq \min(|V|^2, |E|) \cdot \min(|V|, |E| + 1).$$
\end{theorem}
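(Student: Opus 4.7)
The plan is to exhibit a complete system of relations $\RR$ witnessing the claimed bound via a BFS-based construction. First I would collapse parallel edges: whenever $a, b \in E$ share origin and tail, I add $((a),(b))$ to $\RR$, which requires no multiplications since both sequences already lie in $\mathcal{E}$. Choosing one representative per ordered pair of endpoints yields a subset $E' \subseteq E$ with $|E'| \leq \min(|V|^2, |E|)$ and no parallel edges. Set $V'' = \{v \in V : v$ is the origin of at least one edge of $E'\}$, so that $|V''| \leq \min(|V|, |E|) \leq \min(|V|, |E|+1)$; these will serve as the reference vertices.

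For each $r \in V''$, I would run a BFS from $r$ in $G'=(V,E')$. For each non-root vertex $v$ in the reachable set $V_r$, reached via a tree edge $e\colon u \to v$, I define the canonical sequence $c(r, v) = c(r, u) \circ (e)$ at the cost of one multiplication (with $c(r, r) = ()$). For each non-tree edge $e \colon u \to v$ of $E'$ with $u \in V_r$, I spend one more multiplication to build $c(r, u) \circ (e)$ and add the relation $(c(r, u)\circ (e),\, c(r, v))$ to $\RR$. The multiplications per reference vertex thus total $(|V_r|-1)+(|E_r|-(|V_r|-1)) = |E_r| \leq |E'|$, where $E_r \subseteq E'$ consists of edges originating in $V_r$; summing over $r \in V''$ gives the desired bound $|V''| \cdot |E'| \leq \min(|V|, |E|+1) \cdot \min(|V|^2, |E|)$.

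The crux is correctness of $\RR$. The parallel-edge relations let me freely replace any edge by its representative when evaluating labels, so it suffices to consider paths in $G'$. For each reference vertex $r \in V''$, I would prove by induction on path length that $l(p) = l(c(r, t(p)))$ for every path $p$ with $o(p) = r$: the base case is trivial, and the inductive step uses the tree-edge definition of $c(r, \cdot)$ when the last edge is a tree edge and the added relation otherwise. Hence all paths from $r$ to a fixed endpoint carry equal labels. For arbitrary paths $p_1, p_2$ from $u$ to $w$, either both are empty (in which case $l(p_1) = \mathbf{1}_M = l(p_2)$) or the non-empty one forces $u$ to have an outgoing edge, so $u \in V''$ and the preceding argument yields $l(p_1) = l(p_2)$.

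The main obstacle is this last reduction: one must recognize that checking commutativity only from the origins of edges in $E'$ already implies global diagram commutativity, since the source of any non-empty path must lie in $V''$. Once this observation is in place, the per-vertex multiplication count and the bookkeeping of representative edges are routine.
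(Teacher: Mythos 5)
Your proposal is correct and follows essentially the same strategy as the paper's: collapse parallel edges with zero-cost length-one relations, then run a tree search from a set of roots, defining one canonical edge sequence per reached vertex and recording one relation per non-tree edge. Your bookkeeping differs only superficially — BFS instead of DFS, roots restricted to the set $V''$ of vertices with an outgoing edge (giving $|V''|\le\min(|V|,|E|)$ directly, where the paper instead roots a DFS at every vertex and extracts the $\min(|V|,|E|+1)$ factor via a connected-component count in Lemma~\ref{amount_calls}), and loops absorbed into the non-tree-edge relations rather than removed in a separate preprocessing pass — but the construction of $\RR$, the inductive correctness argument, and the resulting bound on $m(\RR)$ are the same.
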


To prove the lower bounds, we construct a family of graphs exhibiting large commutativity and multiplication ranks. Our main lower‐bound results are as follows.

\begin{theorem} \label{lower_bound}
There exists a uniform constant $C > 0$ (it is enough to take $C = 2^{-14}$) such that for any integers $n, m \geq 4$ there exists a graph $G = (V, E)$ with $|V| = n$ and $|E| = m$, such that
$$\eta(G) \geq C \cdot \left(\min(|V|^2, |E|) \cdot \min(|V|, |E|) + |E|\right).$$
\end{theorem}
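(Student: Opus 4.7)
The strategy is to construct, for any $n, m \geq 4$, an explicit graph $G$ with $|V|=n$ and $|E|=m$ exhibiting enough independent ``obstructions to commutativity'' that any complete system of relations must contain at least one pair per obstruction. I follow the blueprint of Sections \ref{S3}--\ref{S5}.

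First, I would introduce and employ the notion of a \emph{disjoint rhomboid}, i.e.\ a pair of internally vertex-disjoint paths between a common pair of endpoints, as developed in Section \ref{S3}. For any $n,m \geq 4$, I would construct $G$ containing $\Omega(\min(|V|^2,|E|)\cdot\min(|V|,|E|))$ pairwise disjoint rhomboids. The construction splits naturally into three regimes: the sparse regime $|E|\lesssim|V|$, handled by disjoint small cycles yielding $\Theta(|E|^2)$ rhomboids; the intermediate regime $|V|\lesssim|E|\lesssim|V|^2$, handled by bundling short parallel paths between many vertex pairs to yield $\Theta(|V||E|)$ rhomboids; and the dense regime $|E|\gtrsim|V|^2$, handled by distributing multi-edges to yield $\Theta(|V|^3)$ rhomboids. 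The case-analytic combinatorics is routine and is deferred to Appendix \ref{appendix}. In the dense regime I would additionally ensure the construction contains $\Omega(|E|)$ independent parallel-edge obstructions, which is automatic when the average multiplicity of edges between vertex pairs is large.

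The second step is the adversarial labeling argument carried out in Section \ref{S5}. Given any complete system of relations $\RR$ for $G$, I suppose for contradiction that $|\RR|$ is smaller than a fixed constant times the number of disjoint rhomboids in $G$, then construct a monoid $M$ (a suitable quotient of a free monoid, or a carefully chosen matrix semigroup, works) together with a labeling $l:E\to M$ that satisfies every pair $(s_i,s_i')\in\RR$ but violates commutativity on some ``uncertified'' rhomboid. The vertex-disjointness of the rhomboid interiors is essential here: because their internal edges do not overlap, the free-monoid structure ensures that each relation in $\RR$ can non-trivially constrain only a bounded number of rhomboids, so a pigeonhole argument forces $|\RR|$ to grow linearly in the rhomboid count. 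This produces the first summand $\Omega(\min(|V|^2,|E|)\cdot\min(|V|,|E|))$ of the claimed bound.

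Finally, I would obtain the additive $\Omega(|E|)$ contribution. In regimes where the construction already contains $\Omega(|E|)$ independent parallel-edge pairs, this follows from an analogous adversarial argument: each parallel-edge pair is a degenerate rhomboid, and any relation certifying it must mention one of the two edges, so $\Omega(|E|)$ distinct relations are forced. In the remaining regimes the term $|E|$ is already dominated by the first summand, so the combined bound $C\cdot(\min(|V|^2,|E|)\cdot\min(|V|,|E|)+|E|)$ follows. Tracking the constants through the three-regime construction, the rhomboid count, and the adversarial argument yields the explicit $C=2^{-14}$. The main obstacle, I anticipate, is the adversarial labeling step: guaranteeing that no single clever relation in $\RR$ can simultaneously certify many disjoint rhomboids requires choosing the target monoid and the labels $l$ so that the two path-products associated to each rhomboid remain ``independent'' of all others in a free-monoid sense, and it is here that the multiplication-rank machinery of Section \ref{S4} is likely to be invoked.
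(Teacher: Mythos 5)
Your outline is headed in the right direction structurally (construct a rhomboid-rich graph, then build an adversarial labeling), but there is a concrete gap in the combinatorics that would sink the sparse and intermediate regimes, and it stems from a misreading of what ``disjoint'' must mean for the counts to work. You describe disjoint rhomboids as having non-overlapping internal edges (``because their internal edges do not overlap, the free-monoid structure ensures\dots''), and you propose ``disjoint small cycles'' in the sparse regime $|E|\lesssim|V|$ to get $\Theta(|E|^2)$ rhomboids. But if rhomboids are required to be edge- or vertex-disjoint, a graph with $|E|$ edges contains at most $O(|E|)$ of them, which falls far short of the needed $\Omega(|E|^2)$ in the sparse regime and $\Omega(|V|\cdot|E|)$ in the intermediate one. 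The paper's notion of disjointness is deliberately much weaker: two rhomboids $(a,b,c,d)$ and $(a',b',c',d')$ are ``disjoint'' if they do not share a \emph{consecutive edge pair} -- they may freely share individual edges and vertices. This is what makes the triploid work: with two middle vertices $v^2_1,v^2_2$ fanned out to $\approx m/4$ vertices on each side, one gets $\approx m^2/16$ rhomboids that all pass through the same two middle vertices and heavily reuse edges, yet are pairwise disjoint in the weak sense. Any construction built from genuinely edge-disjoint gadgets cannot reach the target bound.

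The second issue is that the adversarial step cannot rest on a free-monoid independence heuristic precisely because the rhomboids are not edge-disjoint. The paper instead proves (Lemma~\ref{claim_1}) that every relation in a minimal complete system has both sides of length exactly $2$ or both of length $\geq 3$, and then uses $3$-vanishing labelings into $\mathrm{Mat}_3(\mathbb{R})$ (matrix units $E_{1,2},E_{2,3}$ on a single uncertified consecutive pair $(a,b)$, zero elsewhere). This zeroes out every length-$\geq 3$ product and every length-$2$ pair other than $(a,b)$, so all relations in $\RR$ hold while the rhomboid $(a,b,c,d)$ breaks commutativity. This is sharper than ``each relation constrains a bounded number of rhomboids'': one shows each of the $2\Rh(G)$ distinct consecutive pairs must literally occur as a side of some relation, giving $|\RR|\geq \Rh(G)$ directly. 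The $\Omega(|E|)$ term then comes from loops (a triploid with $m>n^2$ has $\L(G)\geq m/2$ loops, each forcing a relation via Lemma~\ref{rl_lg}), not from multi-edges; the multi-edge construction you sketch would need its own adaptation of Lemma~\ref{claim_1}, which as stated is only proved for graphs without multiple edges.
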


\begin{theorem} \label{lower_bound_mult}
There exists a uniform constant $C > 0$ (it is enough to take $C = 2^{-14}$) such that for any integers $n, m \geq 4$ there exists a graph $G = (V, E)$ with $|V| = n$ and $|E| = m$, such that
$$\nu(G) \geq C \cdot \left(\min(|V|^2, |E|) \cdot \min(|V|, |E|)\right).$$
\end{theorem}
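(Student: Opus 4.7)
The plan is to mirror the strategy used for Theorem~\ref{lower_bound}, replacing the commutativity-rank argument with an analogous multiplication-rank argument applied to the same extremal graph.

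First, given $n, m \geq 4$, I invoke the rhomboid construction from Section~\ref{S3} to produce a graph $G = (V, E)$ with $|V| = n$ and $|E| = m$ containing a collection of $R$ pairwise disjoint rhomboids, where $R \geq c_1 \cdot \min(|V|^2, |E|) \cdot \min(|V|, |E|)$ for an explicit absolute constant $c_1 > 0$. The pairwise disjointness is essential: it prevents any complete system of relations from recycling edges or partial products across distinct rhomboids.

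Second, I invoke the multiplication-rank machinery developed in Section~\ref{S4}. The central fact to use is that in any complete system of relations $\RR$ for $G$, the two paths of each rhomboid must be witnessed (up to chains of equalities in $\RR$) by sequences whose products agree on opposite sides of some relation; hence the set $S \supseteq S_\RR$ realizing $m(\RR)$ must contain enough non-trivial concatenations to build these witnesses from the singletons $\mathcal{E}$. Because the rhomboids are disjoint in vertices and edges, any concatenation $s \circ s'$ produced along the way involves edges of at most one rhomboid (edges outside all rhomboids are useless for certifying commutativity of any particular rhomboid, and edges of distinct rhomboids cannot occur in a single composable sequence without bridging through shared vertices, which do not exist). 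This localization yields a per-rhomboid lower bound $\nu(G) \geq c_2 \cdot R$ for some absolute constant $c_2 > 0$. Note that this statement is in fact slightly easier than its counterpart used for Theorem~\ref{lower_bound}, since $\nu(G)$ carries no additive $|E|$ term accounting for edges appearing in no relation.

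Combining the two steps gives $\nu(G) \geq c_1 c_2 \cdot \min(|V|^2, |E|) \cdot \min(|V|, |E|)$, and tracking the constants through Sections~\ref{S3} and~\ref{S4} suffices to certify $c_1 c_2 \geq 2^{-14}$. The main obstacle I foresee is in the second step: one must rule out every clever factorization in which a single multiplication $s \circ s'$ appears to contribute simultaneously to certifying several rhomboids by participating in auxiliary sequences that later branch across rhomboid boundaries. The disjointness of both vertex and edge sets of the rhomboids, combined with the formal bookkeeping of $m(\RR)$ from Definition~\ref{def_mult_m}, should force a clean partition of the multiplications according to which rhomboid (if any) they serve, and once this partition is established the lower bound follows by summing the per-rhomboid contribution.
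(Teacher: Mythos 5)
Your high-level route (build a triploid with many disjoint rhomboids, then show each rhomboid forces at least one multiplication) matches the paper's overall shape, but the key second step as you describe it contains a genuine error. You assert that the pairwise disjoint rhomboids are ``disjoint in vertices and edges,'' so that ``edges of distinct rhomboids cannot occur in a single composable sequence without bridging through shared vertices, which do not exist.'' This is false for the paper's notion of disjointness (conditions (1)--(4) only forbid two rhomboids from sharing an entire directed side, i.e.\ a common pair of consecutive edges), and it is false for the extremal triploid itself: the family $Rh = \{(v^1_i, v^2_{2j-1}, v^3_k, v^2_{2j})\}$ has rhomboids with the same $i$ sharing the source vertex $v^1_i$, and rhomboids with the same $(i,j)$ but different $k$ even sharing the edges from $v^1_i$ to $v^2_{2j-1}$ and $v^2_{2j}$. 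Consequently the proposed ``localization'' of multiplications to individual rhomboids does not hold, and the claimed clean partition of multiplications by rhomboid cannot be established along these lines.

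The paper instead proves $\nu(G) \geq \Rh(G)$ via a global counting argument that never requires vertex- or edge-disjointness of the rhomboids. Using the algebraic labelings of Section~\ref{S5} (Lemmas \ref{NZedge}--\ref{efneqg}) one shows that in any minimal complete system $\RR$ for such a graph, both sides of every relation have length at least $2$ (Lemma~\ref{claim_1}). Then $m(\RR) \geq |S_{\RR_B}|$ because each multiplication adds at most one new sequence and $S_{\RR_B}$ is disjoint from the starting set $\mathcal{E}$ (Lemma~\ref{m_bound}); and $|S_{\RR_B}| \geq |\RR_B| = |\RR|$ because otherwise the ``relation graph'' on the sides would have more edges than vertices, hence a cycle, contradicting minimality (Lemma~\ref{complete_ineq}). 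Finally $|\RR| \geq \Rh(G)$ is Proposition~\ref{no_loops}, which is the commutativity-rank bound you already invoke for Theorem~\ref{lower_bound}. The loop-removal reduction (Lemmas~\ref{lmln}--\ref{rn_rg}, Corollary~\ref{nu_reduction}) handles the extension from loopless to general quasi-acyclic graphs. So the corrected step you need is precisely the chain $\nu(G) = m(\RR) \geq |\RR| \geq \Rh(G)$, not a per-rhomboid multiplicative accounting.
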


Note that our upper bounds are strictly better than all previously known, the same is true for our algorithm complexity.
Since the lower bounds differ by a uniform constant, it follows that our algorithm is asymptotically optimal.

\begin{remark}
In the proofs of Theorems \ref{lower_bound} and \ref{lower_bound_mult}, we fixed the constant to~\mbox{\(C = 2^{-14}\)} purely to simplify the parameter choices. A more careful optimization of the same constructions shows that the same lower bounds remain valid with a significantly larger constant, at the cost of only minor modifications to the numerical estimates.
\end{remark}

\section{Disjoint rhomboids and triploid graphs} \label{S3}

\subsection{Disjoint rhomboids} In this section, we introduce the notion of disjoint rhomboids and prove that for any fixed number of vertices and edges there exist graphs with enough number of disjoint rhomboids. Informally, rhomboid is just a standard (in the sense of diagrams) square inside a graph.
The formal definition is as follows.
    
\begin{definition}
    A $4$-tuple $(a, b, c, d)$ edges of $G$ is said to form a \textit{rhomboid} if $$o(a) = o(c), \; t(b) = t(d), \; t(a) = o(b), \; t(c) = o(d)$$
    and the vertices $o(a), t(a), o(d), t(d)$ are pairwise distinct.
\end{definition}

\begin{figure}[H]
    \centering
    \begin{minipage}{\textwidth}
    \centering
        \begin{tikzpicture}[scale=0.5]
            \filldraw[black] (0,4) circle (3pt);
            \filldraw[black] (-2,0) circle (3pt);
            \filldraw[black] (2,0) circle (3pt);
            \filldraw[black] (0,-4) circle (3pt);

            \draw[thick, ->] (-0.1, 3.8) -- (-1.9, 0.2);
            \draw[thick, ->] (0.1, 3.8) -- (1.9, 0.2);
            \draw[thick, ->] (-1.9, -0.2) -- (-0.15, -3.7);
            \draw[thick, ->] (1.9, -0.2) -- (0.15, -3.7);

            \node at (-1.4, 2.4) {$a$};
            \node at (-1.4, -2.4) {$b$};
            \node at (1.4, 2.4) {$c$};
            \node at (1.4, -2.4) {$d$};
        \end{tikzpicture}
    \end{minipage}%
    \caption{An example of a rhomboid.}
    \label{Rhomboid}
\end{figure}
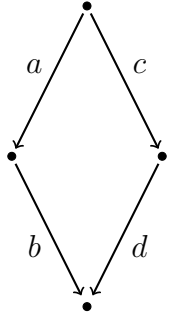

A general rhomboid is shown in Fig. \ref{Rhomboid}. Now we need to define the notion of disjoint rhomboids. Roughly speaking, two rhomboids are said to be disjoint if they do not have a pair of common consecutive oriented edges. The formal definition is as follows.

\begin{definition}
    Two rhomboids $(a, b, c, d)$ and $(a', b', c', d')$ are said to be \textit{disjoint} if the following four conditions are satisfied.

    (1) $a \neq a'$ or $b \neq b'$.

    (2) $a \neq c'$ or $b \neq d'$.

    (3) $c \neq c'$ or $d \neq d'$.

    (4) $c \neq a'$ or $d \neq  b'$.
\end{definition}

\begin{figure}[H]
  \centering
  \begin{subfigure}[b]{0.3\textwidth}
    \centering
    \begin{tikzpicture}[scale=0.5]
      \filldraw[black] (0,0) circle (3pt);
      \filldraw[black] (-3,4) circle (3pt);
      \filldraw[black] (3,4) circle (3pt);
      \filldraw[black] (-3,0) circle (3pt);
      \filldraw[black] (3,0) circle (3pt);
      \filldraw[black] (0,-4) circle (3pt);

      \draw[thin,->] (0, -0.3) -- (0, -3.7);
      \draw[thin,->] (-2.8, -0.3) -- (-0.2, -3.7);
      \draw[thin,->] (2.8, -0.3) -- (0.2, -3.7);
      \draw[thin,->] (-3, 3.7) -- (-3, 0.3);
      \draw[thin,->] (-2.8, 3.7) -- (-0.2, 0.3);
      \draw[thin,->] (2.8, 3.7) -- (0.2, 0.3);
      \draw[thin,->] (3, 3.7) -- (3, 0.3);

      \draw[thick,red]   (-0.1, -0.3) -- (-0.1, -3.7);
      \draw[thick,red]   (-2.95, -0.3) -- (-0.35, -3.7);
      \draw[thick,red]   (-3.1,  3.7) -- (-3.1,  0.5);
      \draw[thick,red]   (-2.95, 3.7) -- (-0.35,  0.3);

      \draw[thick,blue]  (0.1,  -0.3) -- (0.1,  -3.7);
      \draw[thick,blue]  (2.95, -0.3) -- (0.35, -3.7);
      \draw[thick,blue]  (3.1,   3.7) -- (3.1,   0.5);
      \draw[thick,blue]  (2.95,  3.7) -- (0.35,   0.3);
    \end{tikzpicture}
    \subcaption{}\label{fig:2a}
  \end{subfigure}\hfill
  \begin{subfigure}[b]{0.3\textwidth}
    \centering
    \begin{tikzpicture}[scale=0.5]
      \filldraw[black] (-2,4) circle (3pt);
      \filldraw[black] (2,4)  circle (3pt);
      \filldraw[black] (-2,0) circle (3pt);
      \filldraw[black] (2,0)  circle (3pt);
      \filldraw[black] (0,-4) circle (3pt);

      \draw[thin,->] (-2, 3.7) -- (-2, 0.3);
      \draw[thin,->] (2,  3.7) -- (2,  0.3);
      \draw[thin,->] (-1.7,3.7) -- (1.7, 0.3);
      \draw[thin,->] (1.7, 3.7) -- (-1.7,0.3);
      \draw[thin,->] (-1.85,-0.3) -- (-0.15,-3.7);
      \draw[thin,->] (1.85,-0.3) -- (0.15,-3.7);

      \draw[thick,red]  (-2.1,  3.7) -- (-2.1, 0.5);
      \draw[thick,red]  (-1.85, 3.7) -- (1.55, 0.3);
      \draw[thick,red]  (-2,   -0.3) -- (-0.3, -3.7);
      \draw[thick,red]  (1.7,  -0.3) -- (0,    -3.7);

      \draw[thick,blue] (2.1,   3.7) -- (2.1,   0.5);
      \draw[thick,blue] (1.85,  3.7) -- (-1.55, 0.3);
      \draw[thick,blue] (2,    -0.3) -- (0.3,  -3.7);
      \draw[thick,blue] (-1.7, -0.3) -- (0,    -3.7);
    \end{tikzpicture}
    \subcaption{}\label{fig:2b}
  \end{subfigure}\hfill
  \begin{subfigure}[b]{0.3\textwidth}
    \centering
    \begin{tikzpicture}[scale=0.5]
      \filldraw[black] (0,0) circle (3pt);
      \filldraw[black] (0,4) circle (3pt);
      \filldraw[black] (-3,0) circle (3pt);
      \filldraw[black] (3,0) circle (3pt);
      \filldraw[black] (0,-4) circle (3pt);

      \draw[thin,->] (0,  -0.3) -- (0,   -3.7);
      \draw[thin,->] (-2.8,-0.3) -- (-0.2, -3.7);
      \draw[thin,->] (2.8,-0.3) -- (0.2,  -3.7);
      \draw[thin,->] (-0.2,3.7) -- (-2.8,  0.3);
      \draw[thin,->] (0.2, 3.7) -- (2.8,   0.3);
      \draw[thin,->] (0,   3.7) -- (0,     0.3);

      \draw[thick,red]  (-0.1, -0.3) -- (-0.1, -3.7);
      \draw[thick,red]  (-2.95,-0.3) -- (-0.35,-3.7);
      \draw[thick,red]  (-0.35,3.7) -- (-2.95, 0.3);
      \draw[thick,red]  (-0.1, 3.7) -- (-0.1,   0.3);

      \draw[thick,blue] (0.1,  -0.3) -- (0.1,   -3.7);
      \draw[thick,blue] (2.95, -0.3) -- (0.35, -3.7);
      \draw[thick,blue] (0.35,  3.7) -- (2.95,  0.3);
      \draw[thick,blue] (0.1,   3.7) -- (0.1,    0.3);
    \end{tikzpicture}
    \subcaption{}\label{fig:2c}
  \end{subfigure}

  \caption{On Figures \subref{fig:2a} and \subref{fig:2b} the pairs of rhomboids are disjoint, on \subref{fig:2c} – not disjoint.}
  \label{fig:2}
\end{figure}
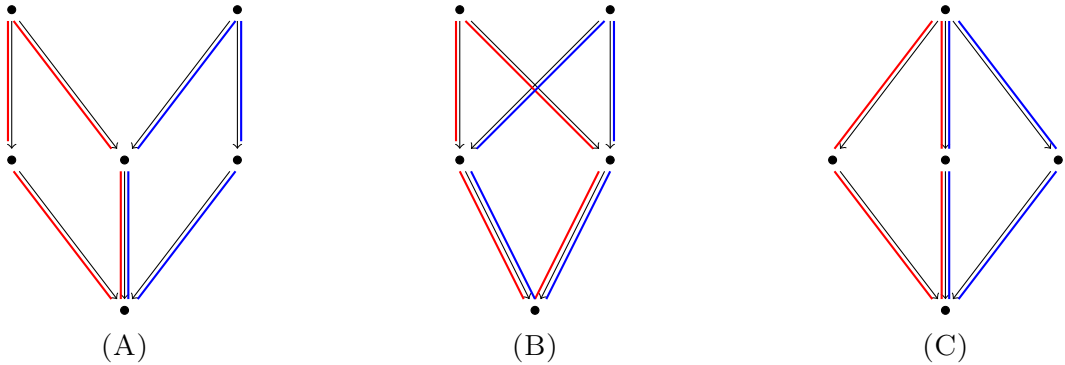

Figures \ref{fig:2a} and \ref{fig:2b} provide examples of disjoint pairs of rhomboids; on Figure \ref{fig:2c} one can find an example of two rhomboids which are not disjoint. The key definition is as follows.

\begin{definition}
Let $G$ be a graph. Then $\Rh(G)$ is defined as 
$$\Rh(G) = \max_{Rh}|Rh|$$
where maximum is taken over all families $Rh$ of pairwise disjointed rhomboids contained in $G$.
\end{definition}

The key idea is that, if $\Rh(G)$ is large enough, then $\eta(G)$ and $\nu(G)$ should also be large, if some additional conditions are satisfied. Roughly speaking, the reason is, under these conditions, each rhomboid from a family of disjoint ones ``requires'' an additional commutativity check.

\subsection{Triploids} Let us provide an explicit construction of a series of graphs with $\Rh(G)$ large enough.
We call these graphs \textit{triploids}.
This construction is based on the following definition.

\begin{figure}[H]
    \centering
    \begin{minipage}{\textwidth}
    \centering
        \begin{tikzpicture}[scale=0.5]
            \filldraw[black] (-8,4) circle (3pt);
            \filldraw[black] (-5,4) circle (3pt);
            \filldraw[black] (1,4) circle (3pt);
            \filldraw[black] (4,4) circle (3pt);
            
            \filldraw[black] (-2.5,4) circle (1pt);
            \filldraw[black] (-2,4) circle (1pt);
            \filldraw[black] (-1.5,4) circle (1pt);
            \filldraw[black] (-2.5,0) circle (1pt);
            \filldraw[black] (-2,0) circle (1pt);
            \filldraw[black] (-1.5,0) circle (1pt);
            \filldraw[black] (-2.5,-4) circle (1pt);
            \filldraw[black] (-2,-4) circle (1pt);
            \filldraw[black] (-1.5,-4) circle (1pt);

            \filldraw[black] (8,1) circle (3pt);
            \filldraw[black] (10,1) circle (3pt);
            \filldraw[black] (9,-1) circle (3pt);
            
            \filldraw[black] (8.7,0.4) circle (1pt);
            \filldraw[black] (9.3,0.4) circle (1pt);
            \filldraw[black] (9,-0.2) circle (1pt);
            \filldraw[black] (9,0.1) circle (1pt);

            \filldraw[black] (0,0) circle (3pt);
            \filldraw[black] (-4,0) circle (3pt);

            \filldraw[black] (-8,-4) circle (3pt);
            \filldraw[black] (-5,-4) circle (3pt);
            \filldraw[black] (1,-4) circle (3pt);
            \filldraw[black] (4,-4) circle (3pt);

            \draw[thin, ->] (-7.8, 3.8) -- (-4.2, 0.2);
            \draw[thin, ->] (-7.6, 3.8) -- (-0.2, 0.2);
            \draw[thin, ->] (-4.9, 3.8) -- (-4.1, 0.2);
            \draw[thin, ->] (-4.8, 3.8) -- (-0.1, 0.2);
            \draw[thin, ->] (0.8, 3.8) -- (-3.9, 0.2);
            \draw[thin, ->] (0.9, 3.8) -- (0.1, 0.2);
            \draw[thin, ->] (3.6, 3.8) -- (-3.8, 0.2);
            \draw[thin, ->] (3.8, 3.8) -- (0.2, 0.2);

            \draw[thin, <-] (-7.8, -3.8) -- (-4.2, -0.2);
            \draw[thin, <-] (-7.6, -3.8) -- (-0.2, -0.2);
            \draw[thin, <-] (-4.9, -3.8) -- (-4.1, -0.2);
            \draw[thin, <-] (-4.8, -3.8) -- (-0.1, -0.2);
            \draw[thin, <-] (0.8, -3.8) -- (-3.9, -0.2);
            \draw[thin, <-] (0.9, -3.8) -- (0.1, -0.2);
            \draw[thin, <-] (3.6, -3.8) -- (-3.8, -0.2);
            \draw[thin, <-] (3.8, -3.8) -- (0.2, -0.2);

            \filldraw[black] (-8,2.3 + 3) circle (1pt);
            \filldraw[black] (-8,2.5 + 3) circle (1pt);
            \filldraw[black] (-8,2.7 + 3) circle (1pt);

            \draw (-7.9,4.1) to [out=150,in=180,looseness=1] (-8,3 + 3);
            \draw[->] (-8,3 + 3) to [out=0,in=30,looseness=1] (-8.1,1.1 + 3);
            \draw (-7.9,4.1) to [out=150,in=180,looseness=1] (-8,3.5 + 3);
            \draw[->] (-8,3.5 + 3) to [out=0,in=30,looseness=1] (-8.1, 1.1+ 3);
            \draw (-7.9,4.1) to [out=150,in=180,looseness=1] (-8, 2 + 3);
            \draw[->] (-8,2 + 3) to [out=0,in=30,looseness=1] (-8.1, 1.1+ 3);

            \node at (-10, 4) {$V^1$};
            \node at (-6, 0) {$V^2$};
            \node at (-10, -4) {$V^3$};
            \node at (7, 0) {$V^0$};
            \node at (3, 2) {$E^{1, 2}$};
            \node at (3, -1.5) {$E^{2, 3}$};
            \node at (-7, 3+ 3) {$L$};
        \end{tikzpicture}
    \end{minipage}%
    \caption{\textit{Triploid visualization}.}
    \label{fig:gen_triploid}
\end{figure}
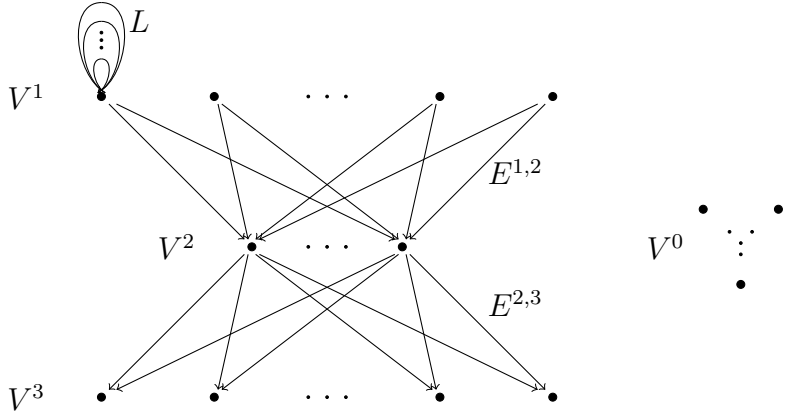

\begin{definition} \label{def_rhomb}
    Let $n_0, n_1, n_2, n_3, e$ be non-negative integers such that $e \geq n_2 (n_1 + n_3)$ and $n_1 > 0$. We define the \textit{triploid} $T(n_1, n_2, n_3, n_0, e)$ as the following graph $G = (V, E)$.

    $\bullet$ $V =  V^0 \sqcup V^1 \sqcup V^2 \sqcup V^3 $ 

    $\bullet$ $V^i = \{ v_1^i, v_2^i, \dots, v_{n_i}^i \}, \ i = 0, 1, 2, 3$

    $\bullet$ $E = E^{1, 2} \sqcup E^{2, 3} \sqcup L$

    $\bullet$ $E^{i, j} = \{ e^{i, j}_{k, l} \ | \ k \in \{ 1, \dots, n_i \}, l \in \{ 1, \dots, n_j \} \}$, $(i, j) = (1, 2), (2, 3)$

    $\bullet$ $t(e^{i, j}_{k, l}) = v^j_l$, $o(e^{i, j}_{k, l}) = v^i_k$

    $\bullet$ $ L = \{ l_1, \dots, l_{e - n_2(n_1 + n_3)} \}$, $o(l_i) = t(l_i) = v_1^1$
\end{definition}

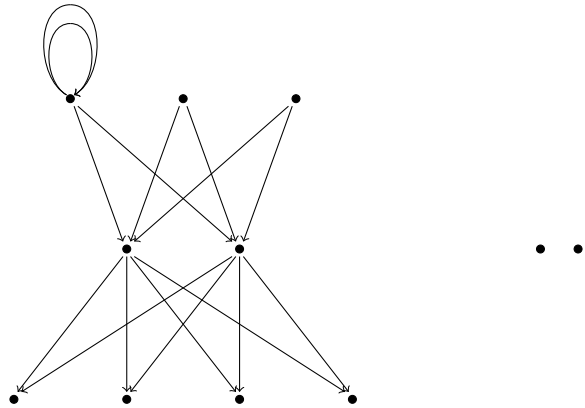
\begin{figure}[H]
    \centering
    \begin{minipage}{\textwidth}
    \centering
        \begin{tikzpicture}[scale=0.5]
            \filldraw[black] (0, 0) circle (3pt);
            \filldraw[black] (-3, 0) circle (3pt);

            \filldraw[black] (-1.5, 4) circle (3pt);
            \filldraw[black] (-4.5, 4) circle (3pt);
            \filldraw[black] (1.5, 4) circle (3pt);

            \filldraw[black] (0, -4) circle (3pt);
            \filldraw[black] (-3, -4) circle (3pt);
            \filldraw[black] (-6, -4) circle (3pt);
            \filldraw[black] (3, -4) circle (3pt);

            \draw[thin, ->] (-4.4, 3.8) -- (-3.1, 0.2);
            \draw[thin, ->] (-4.3, 3.8) -- (-0.2, 0.2);
            \draw[thin, ->] (-1.4, 3.8) -- (-0.1, 0.2);
            \draw[thin, ->] (-1.6, 3.8) -- (-2.9, 0.2);
            \draw[thin, ->] (1.4, 3.8) -- (0.1, 0.2);
            \draw[thin, ->] (1.3, 3.8) -- (-2.8, 0.2);

            \draw[thin, ->] (0, -0.2) -- (0, -3.8);
            \draw[thin, ->] (-3, -0.2) -- (-3, -3.8);
            
            \draw[thin, ->] (-0.1, -0.2) -- (-2.9, -3.8);
            \draw[thin, ->] (-0.2, -0.2) -- (-5.8, -3.8);
            \draw[thin, ->] (0.1, -0.2) -- (2.9, -3.8);
            
            \draw[thin, ->] (-3.1, -0.2) -- (-5.9, -3.8);
            \draw[thin, ->] (-2.9, -0.2) -- (-0.1, -3.8);
            \draw[thin, ->] (-2.8, -0.2) -- (2.8, -3.8);

            \filldraw[black] (8,0) circle (3pt);
            \filldraw[black] (9,0) circle (3pt);
            \draw (-4.6, 4.1) to [out=150,in=180,looseness=1] (-4.5 , 6);
            \draw[->] (-4.5 , 6) to [out=0,in=30,looseness=1] (-4.4, 4.1);
            \draw (-4.6, 4.1) to [out=150,in=180,looseness=1] (-4.5 , 6.5);
            \draw[->] (-4.5 , 6.5) to [out=0,in=30,looseness=1] (-4.4, 4.1);
        \end{tikzpicture}
    \end{minipage}%
    \caption{\textit{Triploid $T(3, 2, 4, 2, 16)$.}}
    \label{fig:triploid}
\end{figure}

Informally, a general triploid $T(n_1, n_2, n_3, n_0, e)$ consists of three rows of vertices $V^1, V^2, V^3$ and a separate group of vertices $V^0$, where $|V^i| = n_i$. There is an oriented edge between each pair of vertices 
from $V^1$ and $V^2$, as well as each pair from $V^2$ and $V^3$. Other edges are loops with endpoints at the first 
vertex of $V^1$, and there are $e$ edges in total.
Figure \ref{fig:gen_triploid} provides a general visualization of a triploid; on Figure \ref{fig:triploid} one can find the triploid $T(3, 2, 4, 2, 16)$.

Denote by $\L(G)$ the number of loops in $G$. 
The following lemma provides a lower bound for the amount of disjoint rhomboids in a general triploid.

\begin{lemma} \label{triploid_rhombs}
    Let $G$ be a triploid $T(n_1, n_2, n_3, n_0, e)$. Then $\Rh(G) \geq n_1 \cdot n_3 \cdot \lfloor \frac{n_2}{2} \rfloor$ and $\L(G) = e - n_2 \cdot (n_1 + n_3)$.
\end{lemma}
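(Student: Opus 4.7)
The loop count is immediate from the definition: since $|E^{1,2}|=n_1n_2$ and $|E^{2,3}|=n_2n_3$, and these are the only non-loop edges of $T(n_1,n_2,n_3,n_0,e)$, we have $\L(G)=|L|=e-n_2(n_1+n_3)$. The content is the lower bound on $\Rh(G)$, which I plan to establish by writing down an explicit family of $n_1\cdot n_3\cdot\lfloor n_2/2\rfloor$ rhomboids indexed by triples and then checking pairwise disjointness via a parity argument.

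The plan is to parametrize the family by $(k,i,l)\in\{1,\dots,n_1\}\times\{1,\dots,\lfloor n_2/2\rfloor\}\times\{1,\dots,n_3\}$ and, for each such triple, set
\[
  a_{k,i,l}=e^{1,2}_{k,\,2i-1},\qquad
  b_{k,i,l}=e^{2,3}_{2i-1,\,l},\qquad
  c_{k,i,l}=e^{1,2}_{k,\,2i},\qquad
  d_{k,i,l}=e^{2,3}_{2i,\,l}.
\]
I would then verify that $(a_{k,i,l},b_{k,i,l},c_{k,i,l},d_{k,i,l})$ is a rhomboid: the required endpoint identifications follow directly from $t(e^{i,j}_{p,q})=v^j_q$ and $o(e^{i,j}_{p,q})=v^i_p$, giving $o(a)=o(c)=v^1_k$, $t(a)=o(b)=v^2_{2i-1}$, $t(c)=o(d)=v^2_{2i}$ and $t(b)=t(d)=v^3_l$; the four vertices $v^1_k,v^2_{2i-1},v^2_{2i},v^3_l$ are pairwise distinct because they lie either in different parts $V^j$ or, in $V^2$, have different indices $2i-1\neq 2i$.

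Next I would check pairwise disjointness for two distinct triples $(k,i,l)\neq(k',i',l')$. Examining the four conditions in the disjointness definition, the crucial observation is a \emph{parity obstruction}: the second index of $a_{k,i,l}$ is odd $(2i-1)$, while the second index of $c_{k',i',l'}$ is even $(2i')$, so the identity $a_{k,i,l}=c_{k',i',l'}$ is impossible; likewise $c_{k,i,l}=a_{k',i',l'}$ never holds. This immediately grants conditions (2) and (4) for free. For conditions (1) and (3), a coincidence $(a,b)=(a',b')$ or $(c,d)=(c',d')$ would force $k=k'$, $i=i'$ and $l=l'$, contradicting distinctness of the triples.

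No step should pose a serious obstacle; the only subtle point is choosing the parametrization so that the parity of the middle index automatically rules out the ``crossed'' disjointness conditions (2) and (4). Once the family is written down in this staggered way, counting its cardinality as $n_1\cdot\lfloor n_2/2\rfloor\cdot n_3$ yields the claim.
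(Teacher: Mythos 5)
Your proof is correct and takes essentially the same approach as the paper: you construct the identical family of $n_1\cdot\lfloor n_2/2\rfloor\cdot n_3$ rhomboids (indexed by a vertex in $V^1$, a pair $\{v^2_{2i-1},v^2_{2i}\}$ in $V^2$, and a vertex in $V^3$), verify each is a rhomboid, and check pairwise disjointness; the loop count is read off directly from the definition, as in the paper. Your write-up is in fact more explicit than the paper's one-line disjointness justification: you correctly isolate the parity obstruction that automatically disposes of the ``crossed'' conditions (2) and (4), whereas the paper only gestures at conditions (1) and (3) via shared vertices and leaves the parity point implicit.
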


\begin{proof}
    Since there are no multiple edges in triploids (except loops), any rhomboid is uniquely determined by its vertices.
    Consider the following family of rhomboids.
    $$Rh = \{(v^1_i, v^2_{2j-1}, v^3_k, v^2_{2j}) \; | \; 1 \leq i \leq n_1, \; 1 \leq j \leq \lfloor \tfrac{n_2}{2} \rfloor, \; 1 \leq k \leq n_3 \}, $$
    $$|Rh| = n_1 \cdot n_3 \cdot \lfloor \tfrac{n_2}{2} \rfloor.$$

    One can easily check that any two distinct rhomboids from $Rh$ are disjoint. Indeed, otherwise they would have a common vertex from each of $V^1, V^2, V^3$ simultaneously; this would contradict the construction of $Rh$.

    The claim about the number of loops immediately follows from Definition \ref{def_rhomb}, since the edges of $T(n_1, n_2, n_3, n_0, e)$ forming loops are precisely $L = \{ l_1, \dots, l_{e - n_2(n_1 + n_3)} \}$, so 
    $$\L(G) = |L| = e - n_2 \cdot (n_1 + n_3).$$
\end{proof}

The central result of this section is as follows.

\begin{theorem} \label{nu_ge}
    There exists a uniform constant $C > 0$ (it is sufficient to take $C = 2^{-14}$) such that for any integers $n, m \geq 4$ there exists an oriented graph $G = (V, E)$ with $|V| = n$ and $|E| = m$ such that 
    \[
    \Rh(G) + \L(G) \geq C \cdot \left(\min(m, n^2) \cdot \min(m, n) + m\right)
    \]
    and
    \[
    \Rh(G) \geq C \cdot \left(\min(m, n^2) \cdot \min(m, n)\right).
    \]
    Moreover, the graph $G$ can be chosen to be isomorphic to a triploid $T(n_1, n_2, n_3, n_0, e)$ for some non-negative integers $n_1, n_2, n_3, n_0, e$.
\end{theorem}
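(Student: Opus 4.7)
The strategy is to construct, for each admissible pair $(n, m)$, an explicit triploid $G = T(n_1, n_2, n_3, n_0, m)$ with $|V| = n$ and $|E| = m$, then invoke Lemma~\ref{triploid_rhombs}, which guarantees $\Rh(G) \geq n_1 n_3 \lfloor n_2/2 \rfloor$ and $\L(G) = m - n_2(n_1 + n_3)$. The parameters must satisfy $n_0 + n_1 + n_2 + n_3 = n$, $n_1 \geq 1$, and $n_2(n_1 + n_3) \leq m$. Because $C = 2^{-14}$ is extremely small, every estimate has generous slack, so only rough arithmetic is required.

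I would split into three regimes corresponding to the functional form of $\min(m, n^2)\min(m, n)$. In Regime I ($m \leq n$, target $\Omega(m^2)$), set $n_1 = n_3 = \lfloor m/8 \rfloor$ and $n_2 = 2$, absorbing the leftover vertices into $V^0$; since $n_2(n_1+n_3) \leq m/2$, this yields $\Rh(G) \gtrsim m^2$ and $\L(G) \gtrsim m$, which covers both the pure rhomboid bound and the $+m$ summand in the sum bound. In Regime II ($n < m \leq n^2$, target $\Omega(mn)$), take $n_1 = n_3 \approx n/4$ and $n_2 \approx \min(2m/n,\, n/2)$, so that either the edge or the vertex budget binds; either way one checks that $\Rh(G) \gtrsim \min(mn,\, n^3)$, which dominates the target, and the $+m$ summand is absorbed since $mn \geq m$. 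In Regime III ($m \geq n^2$, target $\Omega(n^3 + m)$), choose $n_1 = n_3 = n_2 = \lfloor n/4 \rfloor$, using about $n^2/8$ edges for the bipartite part and leaving the remaining $\gtrsim m$ edges as loops; this delivers $\Rh(G) \gtrsim n^3$ and $\L(G) \gtrsim m$ simultaneously.

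The main obstacle will be the boundary corners where floor operations collapse a parameter — for instance $\lfloor m/8 \rfloor = 0$ when $m \leq 7$, or $\lfloor n/4 \rfloor \leq 1$ near $n = 4$. At such corners the right-hand side $C\bigl(\min(m,n^2)\min(m,n) + m\bigr)$ is itself at most a small constant, so the minimal admissible triploid $T(1, 2, 1, n - 4, m)$ — which is well-defined for every $n, m \geq 4$ and contains at least one rhomboid — discharges them uniformly. These boundary adjustments, together with the exact integer rounding in each of the three main regimes and the verification that the explicit constant $2^{-14}$ indeed works, are the routine but fiddly bookkeeping steps that the paper defers to Appendix~\ref{appendix}.
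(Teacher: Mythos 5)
Your proposal takes essentially the same approach as the paper: construct an explicit triploid $T(n_1,n_2,n_3,n_0,m)$ with parameters chosen according to which of $m$, $n$, $n^2$ dominates, invoke Lemma~\ref{triploid_rhombs} for the rhomboid and loop counts, and discharge the small-parameter corners (where floors collapse a side of the triploid to zero) with the minimal triploid $T(1,2,1,n-4,m)$. The paper's case split in Lemmas~\ref{m16}--\ref{mbign} differs in the regime boundaries (it uses $m \le 2n-4$ where you use $m \le n$, and separates out all pairs with $n \le 16$ or $m \le 16$) and, in the intermediate regime $2n-4 < m \le n^2$, parametrizes $n_2$ via $t = \lceil (n - \sqrt{\max(0,n^2-4m)})/2\rceil$ rather than your more direct $n_2 \approx \min(2m/n, n/2)$; the square-root choice is tuned to make $n_2(n_1+n_3) \le m$ hold with near-equality, which streamlines the chain of inequalities certifying $C = 2^{-14}$. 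Both parametrizations yield the claimed asymptotics, so the distinction is one of bookkeeping rather than strategy; you have correctly identified the structure of the argument while leaving the verification of the explicit constant (which is exactly what Appendix~\ref{appendix} does) as the remaining routine work.
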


The proof of Theorem \ref{nu_ge} is deferred to Appendix \ref{appendix} due to its technical complexity.

\section{Graph multiplication rank} \label{S4}

\subsection{Monotonicity}
In this section we prove some properties of graph multiplication rank. We begin with a series of results on the monotonicity properties of the $m$ function, introduced in Definition \ref{def_mult_m}. The first observation is as follows.

\begin{lemma} \label{m_momotonous}
    Let $\RR$ and $\RR'$ be systems of relations for an oriented graph $G$ such that $\RR' \subseteq \RR$. Then $m(\RR') \leq m(\RR)$.
\end{lemma}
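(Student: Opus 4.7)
The plan is to argue directly from Definition~\ref{def_mult_m}. The quantity $m(\RR)$ is defined as the minimum of $m(\mathcal{E}, S)$ taken over all finite edge-sequence sets $S$ with $S \supseteq S_{\RR}$, where $S_{\RR} = \bigcup_{(s,s') \in \RR} \{s, s'\}$. The first and only substantive observation is that the operation $\RR \mapsto S_{\RR}$ is monotone under inclusion: since every pair $(s,s') \in \RR'$ also lies in $\RR$, we have $S_{\RR'} \subseteq S_{\RR}$.

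From this inclusion, any candidate set $S$ feasible in the minimization defining $m(\RR)$, i.e.\ satisfying $S \supseteq S_{\RR}$, automatically satisfies $S \supseteq S_{\RR'}$ and is therefore also feasible in the minimization defining $m(\RR')$. Hence the set of admissible $S$ for $\RR'$ is a superset of that for $\RR$, and the minimum of $m(\mathcal{E}, S)$ over a larger feasible set can only be smaller or equal. This gives $m(\RR') \leq m(\RR)$.

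I do not anticipate any obstacle here: the lemma is purely a consequence of the definition, specifically of the fact that $m(\RR)$ is a minimum over a family that enlarges when $\RR$ shrinks. No analysis of the inner quantity $m(\mathcal{E}, S)$ (the minimal number of concatenation steps) is needed; it is simply being minimized over a larger or equal domain. The proof can thus be written in two or three lines.
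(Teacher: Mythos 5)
Your proof is correct and is essentially identical to the paper's own argument: both observe that $S_{\RR'} \subseteq S_{\RR}$, so the feasible set in the minimization defining $m(\RR')$ contains that for $m(\RR)$, giving $m(\RR') \le m(\RR)$.
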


\begin{proof}
    This is straightforward since $S_{\RR'} \subseteq S_{\RR}$ and hence
    $$m(S_{\RR'}) = \min_{S \supseteq S_{\RR'}} m(\mathcal{E}, S) \leq \min_{S \supseteq S_{\RR}} m(\mathcal{E}, S) = m(S_{\RR}).$$
\end{proof}

Lemma \ref{m_momotonous} automatically implies the following useful fact.

\begin{corollary} \label{minimal_nu}
    Let $G$ be an oriented graph. Then $\nu(G) = m(\RR)$ for some minimal complete system of relations $\RR$ for $G$.
\end{corollary}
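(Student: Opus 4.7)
The plan is to start from any minimizer of $\nu(G) = \min_{\RR} m(\RR)$ and successively remove redundant relations until minimality is attained, using Lemma \ref{m_momotonous} to control what happens to $m$ along the way.

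More concretely, first I would invoke the definition of $\nu(G)$ to pick some complete system of relations $\RR_0$ for $G$ with $m(\RR_0) = \nu(G)$; such an $\RR_0$ exists because $\nu(G)$ is defined as a minimum over a nonempty set of values in $\N$, and completeness of at least one system follows from the trivial ``all-pairs-of-equal-paths'' system. I would then note that, by Definition \ref{system}, every complete system of relations is a finite set of pairs, so $|\RR_0| < \infty$.

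Next I would perform a finite descent. If $\RR_0$ is already minimal, take $\RR = \RR_0$ and we are done. Otherwise, by definition there is a proper subset $\RR_1 \subsetneq \RR_0$ which is still a complete system of relations. Lemma \ref{m_momotonous} gives $m(\RR_1) \leq m(\RR_0) = \nu(G)$, while the minimum in the definition of $\nu(G)$ forces $m(\RR_1) \geq \nu(G)$, hence $m(\RR_1) = \nu(G)$. Because $|\RR_1| < |\RR_0|$ and each $|\RR_i|$ is a non-negative integer, iterating this removal terminates after at most $|\RR_0|$ steps, yielding a minimal complete system $\RR$ with $m(\RR) = \nu(G)$.

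I do not expect any real obstacle here; the only thing to watch is the subtle logical point that the definition of ``minimal complete system of relations'' is the negation of ``some proper subset is still complete,'' so the descent step is immediate once the previous step has not yet produced a minimal system, and Lemma \ref{m_momotonous} handles the sole arithmetic concern by ensuring that shrinking $\RR$ cannot increase $m(\RR)$.
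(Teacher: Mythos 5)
Your proposal is correct and takes exactly the route the paper intends: the paper simply states that Lemma~\ref{m_momotonous} ``automatically implies'' the corollary, and your finite descent (or, equivalently, picking a minimizer of smallest cardinality and checking it is minimal) is precisely the fleshed-out version of that one-line argument.
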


We also need the following result.

\begin{lemma} \label{nu_is_monotonous}
    Let $G$ be a graph and $\RR$ be any system of relations for $G$. 
    Let $\RR'$ be a system of relations obtained from $\RR$ by removing all loop-edges from all edge sequences. Then $m(\RR') \leq m(\RR)$.
\end{lemma}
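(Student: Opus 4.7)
The plan is to push forward an optimal multiplication scheme for $\RR$ through the loop-removal map and show that the result is a scheme of no greater cost for $\RR'$. Formally, let $\phi$ be the map on edge sequences of $G$ that deletes every loop entry, so $\phi(e_1,\dots,e_k)$ is the subsequence of $(e_1,\dots,e_k)$ consisting of all non-loop edges. Two properties of $\phi$ drive the argument. First, $\phi$ is a homomorphism with respect to concatenation, $\phi(s \circ s') = \phi(s) \circ \phi(s')$, which follows immediately from its definition as a letterwise deletion on the concatenated word. Second, $\phi(\mathcal{E}) \subseteq \mathcal{E}$, because $\phi$ fixes the empty sequence and every non-loop singleton $(e)$, while it sends each loop singleton to $() \in \mathcal{E}$. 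From the way $\RR'$ is obtained from $\RR$, one also checks at once that $S_{\RR'} = \phi(S_\RR)$.

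By the definition of $m(\RR)$, fix an optimal chain $\mathcal{E} = S_0, S_1, \dots, S_k$ with $S_k \supseteq S_\RR$, $k = m(\RR)$, and $S_i = S_{i-1} \cup \{u_i \circ v_i\}$ for some $u_i, v_i \in S_{i-1}$ at every step. I will mirror this construction to build $\mathcal{E} = T_0, T_1, \dots, T_k$: at step $i$ I attempt to add the product $\phi(u_i) \circ \phi(v_i)$, and if this element already lies in $T_{i-1}$ I set $T_i = T_{i-1}$ and perform no multiplication, while otherwise I set $T_i = T_{i-1} \cup \{\phi(u_i) \circ \phi(v_i)\}$. A routine induction, using the two properties of $\phi$ above, shows $\phi(S_i) \subseteq T_i$; in particular $\phi(u_i), \phi(v_i) \in T_{i-1}$, so every attempted multiplication is legal.

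The number of multiplications actually performed is at most $k$, and by construction $T_k \supseteq \mathcal{E} \cup \phi(S_k) \supseteq \mathcal{E} \cup \phi(S_\RR) = \mathcal{E} \cup S_{\RR'}$. Hence $m(\mathcal{E}, T_k) \leq k$, and minimizing over supersets of $S_{\RR'}$ gives $m(\RR') \leq m(\mathcal{E}, T_k) \leq k = m(\RR)$. I do not anticipate any serious obstacle: the only subtlety is that some mirrored multiplications may be vacuous, either when one of $\phi(u_i), \phi(v_i)$ equals $()$ or when the intended product is already present in $T_{i-1}$, but such vacuous steps only make the resulting scheme cheaper, never more expensive, which is precisely the direction of the inequality we need.
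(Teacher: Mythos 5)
Your proof is correct and follows essentially the same strategy as the paper's: push an optimal multiplication chain for $\RR$ forward through the loop-deletion map and observe that this yields a chain of no greater length certifying $m(\RR')$. Your version is somewhat more careful than the paper's one-line verification, in that you explicitly handle the points the paper leaves as "straightforward" — namely that $\phi(\mathcal{E})\subsetneq\mathcal{E}$ when loops are present (so one must take $T_0=\mathcal{E}$ rather than $\phi(S_0)$) and that some mirrored steps may be vacuous and should simply be skipped — but these are refinements within the same argument, not a different route.
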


\begin{proof}
    Let
    $$\mathcal{E} = S_0 \subseteq S_1 \subseteq S_2 \subseteq \dots \subseteq S_{m(\RR)} \supseteq S_{\RR}$$
    is a sequence of sets realizing $m(\RR)$, i.e. $S_{i}$ can be obtained by multiplication from $S_{i-1}$ for all $i \in \{1, 2, \dots, m(\RR)\}$.
    Denote by $S'_i$ the set obtained from $S_i$ by removing all loop-edges from all edge sequences of $S_i$. Then it is straightforward to check that the sequence 
    $$\mathcal{E} = S_0 \subseteq S'_1 \subseteq S'_2 \subseteq \dots \subseteq S'_{m(\RR)} \supseteq S_{\RR'}$$
    satisfies the same property. Therefore, $m(\RR') \leq m(\RR)$.
\end{proof}

\subsection{Some lower bounds}
The next goal is to prove a lower bound for the $m$ function in some special case.

Let us introduce some additional notation. Let $\RR$ be a complete system of relations for an oriented graph $G$. We denote by $\RR_B \subseteq \RR$ the subset consisting of all relations $(s, s') \in \RR$ such that $|s| \geq 2$ and $|s'| \geq 2$. Consider the set
    $$S_{\RR_B} = \bigcup_{(s, s') \in \RR_B} \{s, s'\}.$$
First, we prove the following.

\begin{lemma} \label{m_bound}
    Let $\RR$ be a complete system of relations for an oriented graph $G$.
    In the above notation we have $m(\RR) \geq |S_{\RR_B}|$.
\end{lemma}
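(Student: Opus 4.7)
The plan is to exploit the fact that every multiplication adds at most one new edge sequence to the current set, combined with the fact that the elements of $S_{\RR_B}$ are ``too long'' to be present in the initial set $\mathcal{E}$. Since $\mathcal{E}$ consists only of the empty sequence and length-one sequences, while every element of $S_{\RR_B}$ has length at least $2$, none of the sequences in $S_{\RR_B}$ are available for free at the start.

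More concretely, I would first note that $\RR_B \subseteq \RR$ implies $S_{\RR_B} \subseteq S_{\RR}$, so any $S \supseteq S_{\RR}$ satisfies $S \supseteq S_{\RR_B}$. Next, I would fix an arbitrary such $S$ together with a witnessing chain
\[
\mathcal{E} = S_0 \subseteq S_1 \subseteq \cdots \subseteq S_k = S
\]
realizing the minimum $m(\mathcal{E}, S) = k$, where each $S_i$ is obtained from $S_{i-1}$ by a single multiplication. By definition of multiplication, $S_i = S_{i-1} \cup \{s_i \circ s'_i\}$ for some $s_i, s'_i \in S_{i-1}$; consequently $|S_i| \leq |S_{i-1}| + 1$, so $|S_k| \leq |\mathcal{E}| + k$. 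More importantly, every sequence in $S_k \setminus \mathcal{E}$ must be produced at some step, which gives $|S_k \setminus \mathcal{E}| \leq k$.

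To conclude, I would observe that every element of $S_{\RR_B}$ has length at least $2$ by definition of $\RR_B$, whereas every element of $\mathcal{E}$ has length $0$ or $1$. Therefore $S_{\RR_B} \cap \mathcal{E} = \emptyset$, and combined with $S_{\RR_B} \subseteq S_k$ this gives
\[
|S_{\RR_B}| = |S_{\RR_B} \setminus \mathcal{E}| \leq |S_k \setminus \mathcal{E}| \leq k = m(\mathcal{E}, S).
\]
Taking the minimum over all $S \supseteq S_{\RR}$ yields $m(\RR) \geq |S_{\RR_B}|$.

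There is no real obstacle here; the argument is essentially a counting observation. The only subtlety to be careful about is making explicit that the inequality $|S_i| \leq |S_{i-1}|+1$ is inherent in the definition of a multiplication step (each step introduces at most one new sequence, namely $s_i \circ s'_i$), which is what lets us charge each element of $S_{\RR_B}$ to a distinct multiplication.
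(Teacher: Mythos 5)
Your proof is correct and takes essentially the same approach as the paper: you realize $m(\RR)$ by a witnessing chain starting from $\mathcal{E}$, observe that each multiplication step adds at most one new sequence, and use the disjointness $S_{\RR_B}\cap\mathcal{E}=\varnothing$ (since elements of $S_{\RR_B}$ have length at least $2$ while $\mathcal{E}$ contains only sequences of length at most $1$) to charge each element of $S_{\RR_B}$ to a distinct step. Your version is marginally more explicit in spelling out the length argument for disjointness and in passing from a fixed $S\supseteq S_{\RR}$ to the minimum, but the underlying counting argument is identical to the paper's.
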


\begin{proof}
    Let
    $$\mathcal{E} = S_0 \subseteq S_1 \subseteq S_2 \subseteq \dots \subseteq S_{m(\RR)} \supseteq S_{\RR}$$
    is a sequence of sets realizing $m(\RR)$, i.e. $S_{i}$ can be obtained by multiplication from $S_{i-1}$ for all $i \in \{1, 2, \dots, m(\RR)\}$.
    
    Note that $S_{\RR_B} \cap \mathcal{E} = \varnothing$ and $\mathcal{E}, S_{\RR_B} \subseteq S_{m(\RR)}$, so 
    $$|S_{m(\RR)}| \geq |\mathcal{E}| + |S_{\RR_B}|.$$ 
    On the other hand, $|S_i| \leq |S_{i-1}| + 1$ for all $i \in \{1, 2, \dots, m(\RR)\}$, hence 
    $$|S_{m(\RR)}| \leq |\mathcal{E}| + m(\RR).$$
    Combining these two inequalities, we obtain that $m(\RR) \geq |S_{\RR_B}|$.
\end{proof}

The next step is the following lemma.

\begin{lemma} \label{complete_ineq}
    Let $\RR$ be a minimal complete system of relations for an oriented graph $G$.
    In the above notation we have $m(\RR) \geq |\RR_B|$.
\end{lemma}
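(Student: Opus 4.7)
The plan is to combine Lemma~\ref{m_bound}, which already yields $m(\RR) \geq |S_{\RR_B}|$, with the combinatorial inequality $|S_{\RR_B}| \geq |\RR_B|$. The intuition behind this inequality is that if many relations in $\RR_B$ shared only a few edge sequences, one of them would be forced by the others through transitivity, contradicting the minimality of $\RR$.

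Concretely, I would introduce an auxiliary undirected graph $H$ whose vertex set is $S_{\RR_B}$ and whose edges are the unordered pairs $\{s, s'\}$ coming from $(s, s') \in \RR_B$. Before counting anything, I would verify that $H$ is \emph{simple}: a self-loop in $H$ corresponds to a relation of the form $(s, s)$, which is trivially satisfied and therefore cannot belong to any minimal complete system; a pair of parallel edges corresponds to $\RR$ containing both $(s, s')$ and $(s', s)$, but these two relations are logically equivalent, so removing either preserves completeness and again violates minimality. This gives $|V(H)| = |S_{\RR_B}|$ and $|E(H)| = |\RR_B|$.

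Next I would suppose toward a contradiction that $|\RR_B| > |S_{\RR_B}|$. Since a simple graph whose edge count strictly exceeds its vertex count is not a forest, $H$ contains a cycle $s_1 - s_2 - \cdots - s_k - s_1$ of length $k \geq 3$, corresponding to distinct relations $r_1, \dots, r_k \in \RR_B$. Any labeling $l$ satisfying $r_1, \dots, r_{k-1}$ forces $l(s_1) = l(s_2) = \cdots = l(s_k)$, which in particular yields $l(s_k) = l(s_1)$, so $r_k$ is automatically satisfied. Hence any labeling satisfying $\RR \setminus \{r_k\}$ satisfies all of $\RR$, and since $\RR$ is complete, so is $\RR \setminus \{r_k\}$, contradicting minimality. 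Thus $|S_{\RR_B}| \geq |\RR_B|$, and combining with Lemma~\ref{m_bound} proves the lemma.

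The only delicate step will be the verification that $H$ is simple: one must spell out that a minimal complete system contains no trivially satisfied or logically duplicated relations, extending the observation made after Definition~\ref{system} beyond the two special cases $((a),(a))$ and $((),())$ singled out there. Once that is in place, the cycle-plus-transitivity argument is a routine application of the minimality hypothesis and the elementary fact that $|E| > |V|$ in a simple graph forces a cycle.
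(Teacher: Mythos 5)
Your proposal is correct and takes essentially the same approach as the paper: both invoke Lemma~\ref{m_bound} to reduce to $|S_{\RR_B}| \geq |\RR_B|$, construct the auxiliary graph with vertex set $S_{\RR_B}$ and edges from $\RR_B$, and use the cycle-plus-transitivity argument against minimality. The only small difference is that you first establish the auxiliary graph is simple (handling self-loops and parallel edges as separate violations of minimality) so that a surplus of edges forces a cycle of length at least~$3$, whereas the paper simply works in the multigraph setting where a self-loop or a pair of parallel edges already counts as a cycle to which the same transitivity/minimality argument applies; both treatments are sound.
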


\begin{proof}
    In view of Lemma \ref{m_bound}, it suffices to check that $|S_{\RR_B}| \geq |\RR_B|$. We prove by contradiction. Assume the converse, i.e. the inequality $|S_{\RR_B}| < |\RR_B|$ holds.

    Consider an unoriented graph $\Gamma = (V_\Gamma, E_\Gamma)$ with the vertex set $V_\Gamma = S_{\RR_B}$, and we add an edge $(s, s')$ to $\Gamma$ for each $(s, s') \in \RR_B$, so $|E_\Gamma| = |\RR_B|$.
    
    By assumption, $|E_\Gamma| = |\RR_B| > |S_{\RR_B}| = |V_\Gamma|$, so $\Gamma$ contains a cycle. Let $(s, s') \in \RR_B$ corresponds to any edge of this cycle. Then for any labeling $l: G \to M$ the equality $l(s) = l(s')$ holds automatically by transitivity if all the relations from $\RR_B \setminus \{(s, s')\}$ holds. Therefore, since $\RR_B \subseteq \RR$, we obtain that $\RR$ is not minimal. This contradiction proves the result.
\end{proof}

Lemma \ref{complete_ineq} automatically implies the following corollary, which is the most important result of this section.

\begin{corollary} \label{rb_is_r}
    Let $\RR$ be a minimal complete system of relations for an oriented graph $G$. Assume that for any relation $(e, e') \in \RR$ we have $|e| \geq 2$ and $|e'| \geq 2$.
    Then $m(\RR) \geq |\RR|$.
\end{corollary}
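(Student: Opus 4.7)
The plan is to observe that the hypothesis of the corollary forces $\RR_B = \RR$, after which the conclusion is a direct invocation of Lemma \ref{complete_ineq}. More precisely, by definition $\RR_B$ consists of exactly those pairs $(s, s') \in \RR$ with $|s| \geq 2$ and $|s'| \geq 2$. The assumption of the corollary is that every element of $\RR$ satisfies this length condition, so the inclusion $\RR_B \subseteq \RR$ from the definition is actually an equality.

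Once this identification is made, Lemma \ref{complete_ineq} applies to the minimal complete system $\RR$ and yields $m(\RR) \geq |\RR_B|$. Substituting $|\RR_B| = |\RR|$ gives the desired inequality $m(\RR) \geq |\RR|$.

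There is essentially no obstacle here: the corollary is a clean specialization of Lemma \ref{complete_ineq} to the case where the length restriction holds globally on $\RR$, and the only thing to verify is the trivial set-theoretic equality $\RR_B = \RR$. One should however make sure that the minimality hypothesis is preserved (it is assumed directly), since Lemma \ref{complete_ineq} requires $\RR$ to be a minimal complete system; no extra work is needed on this point.
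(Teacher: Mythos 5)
Your argument is correct and is exactly the intended derivation: the paper states that Lemma \ref{complete_ineq} ``automatically implies'' the corollary, and your proof spells out the (indeed trivial) reason, namely that the global length hypothesis forces $\RR_B = \RR$, after which $m(\RR) \geq |\RR_B| = |\RR|$. Nothing to add.
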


\section{Lower bounds for commutativity and multiplication rank} \label{S5}
\subsection{Sketch of proof}
In this section we prove lower bounds for commutativity and multiplication rank, i.e. provide proofs of Theorems \ref{lower_bound} and \ref{lower_bound_mult}. These proofs are divided into several steps, so we start with a sketch.

First, let us introduce some notation.
Recall that edges $e, e' \in E$ are called multiple if 
$$t(e) = t(e') \neq o(e) = o(e'),$$
i.e. we additionally require them to be not loops.
We say that edges $a, b, c \in E$ form a triangle if 
$$o(a) = o(b), \; t(b) = o(c), \; t(c) = t(a),$$
and neither of them is a loop.

Second, we call a graph \textit{quasi-acyclic} if it has no directed cycles that visit two or more distinct vertices (self-loops permitted); equivalently, every strongly connected component is a singleton.

Finally, we say that a graph $G$ is $k$-\textit{path-bounded} if every oriented path avoiding loops edges has length at most $k$ (we will use only $k=2$).

The key observation is as follows.

\begin{remark} \label{remark_triploid}
Let $G$ be a triploid. Then $G$ is quasi-acyclic, $2$-path-bounded, and does not have multiples edges and triangles.
\end{remark}

The main results of this section are the following two theorems.

\begin{theorem}\label{eq_amount_theorem}
Let $G$ be a quasi-acyclic $2$-path-bounded graph without multiple edges and triangles. Then $\eta(G) \geq \Rh(G) + \L(G)$. 
\end{theorem}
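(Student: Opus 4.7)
The plan is to build an injection from the loops $l_1,\dots,l_{\L(G)}$ together with a maximum family of pairwise disjoint rhomboids $R_1,\dots,R_{\Rh(G)}$ into any complete system of relations $\RR$ for $G$, using two independent invariants whose contributions to $\RR$ cannot overlap. For each loop $l_j$ I take the $\Z$-valued labeling $\mu_j$ with $\mu_j(l_j)=1$ and $\mu_j(e)=0$ for $e\neq l_j$; it satisfies every rhomboid equation and every loop equation other than $l_j=0$, so the completeness of $\RR$ forces some relation in $\RR$ whose two sides have unequal counts of $l_j$. Assembling the loop-count differences $\lambda(s,s')\in\Z^{\L(G)}$ over all $(s,s')\in\RR$, their $\Z$-span must contain every standard basis vector and therefore equals $\Z^{\L(G)}$; this produces a subset $\RR_\ell\subseteq\RR$ with $|\RR_\ell|\geq\L(G)$ relations of nonzero loop content.

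Next, for each $R_i=(a_i,b_i,c_i,d_i)$ I define $M_i$ as the quotient of the free monoid $F(E)$ by the congruence generated by the rhomboid relations for $R_j$ with $j\neq i$ and by all loop relations, and let $\nu_i\colon E\to M_i$ be the tautological labeling $\nu_i(e)=[e]$. The key lemma is that $[a_ib_i]\neq[c_id_i]$ in $M_i$: after reducing modulo loops, every derivation becomes a sequence of length-$2$ substitutions $R_j$ with $j\neq i$, and the four disjointness conditions are precisely the statement that neither $a_jb_j$ nor $c_jd_j$ equals $a_ib_i$ for $j\neq i$, so no such rule can ever be applied to the length-$2$ word $a_ib_i$ (loop insertions do not help, since the $R_j$-substitutions match only contiguous pairs of non-loop edges). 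With the key lemma in hand, $\nu_i$ satisfies every parallel-path equation except $R_i$'s, and completeness of $\RR$ produces a relation in $\RR$ violated by $\nu_i$.

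To upgrade these per-$i$ witnesses to a lower bound of $\Rh(G)$ distinct relations, I use the combinatorial fact that the four disjointness conditions together say precisely that the $2\,\Rh(G)$ ordered pairs $(a_i,b_i),(c_i,d_i)$ for $i=1,\dots,\Rh(G)$ are pairwise distinct elements of $E\times E$, so the indicator vectors $\sigma_i=\mathbf{1}_{(a_i,b_i)}-\mathbf{1}_{(c_i,d_i)}\in\Z^{E\times E}$ have pairwise disjoint supports and are linearly independent. To each $(s,s')\in\RR$ I would associate its adjacent-pair invariant (the multiset of consecutive edge-pairs of $s$ minus that of $s'$, corrected for the seam contribution introduced at concatenation), and argue that a relation witnessing the violation of $\nu_i$ contributes nonzero weight to the $\sigma_i$-direction; the resulting span requirement forces at least $\Rh(G)$ such relations, giving $\RR_r\subseteq\RR$ with $|\RR_r|\geq\Rh(G)$. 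Since the $\nu_i$-witnesses annihilate loops, the relations in $\RR_r$ can be chosen with zero loop-count and are thus disjoint from $\RR_\ell$, yielding $|\RR|\geq\L(G)+\Rh(G)$. The main obstacle is this final bookkeeping step: the map from sequences to adjacent-pair multisets is not a monoid homomorphism, so tracking the contribution of a single $\RR$-relation to the $\sigma_i$-span through possibly long substitution chains requires careful handling of boundary terms; a cleaner alternative I would pursue if the invariant-theoretic calculation proves unwieldy is a Hall-type matching from the disjoint rhomboids into $\RR$, whose marriage condition follows from applying the generalized key lemma to each subfamily of the disjoint family.
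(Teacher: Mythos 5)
Your high-level plan -- split the count into a loop contribution and a rhomboid contribution -- matches the paper's, but the rhomboid half of your argument has genuine gaps that you yourself flag, and there is one spot in the loop half where the reasoning does not follow as stated.

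\textbf{Loop bound.} You deduce from ``for each $j$ some relation has $\lambda(s,s')_j\neq 0$'' that the $\lambda$'s span $\Z^{\L(G)}$. This implication is false (consider $\lambda$-vectors all proportional to $(1,1,\dots,1)$: every coordinate is hit, yet the rank is $1$). The correct statement is: if the span of $\{\lambda(s,s')\}$ has rank $<\L(G)$, pick $0\neq v$ in the orthogonal complement, label each loop $l_j$ by $v_j$ and all non-loop edges by $0$ (in a suitable monoid), observe that every relation of $\RR$ is then satisfied while the diagram is non-commutative because some loop carries a nonzero label. This is exactly the paper's Lemma~\ref{rl_lg}, which works in $\Mat_2(\R)$ with upper-triangular unipotent matrices so that non-loop edges can be sent to $\mathbf{0}$. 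Your version needs this repair.

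\textbf{Rhomboid bound.} Two problems. First, your monoid $M_i$ is the quotient of $F(E)$ by the rhomboid relations of $R_j$ for $j\neq i$ \emph{only within the chosen disjoint family} plus loop relations. But $G$ typically has many more rhomboids than those in a maximal disjoint family, so $(G,\nu_i)$ does not ``satisfy every parallel-path equation except $R_i$'s''; the diagram can violate rhomboids outside the family, and then the relation of $\RR$ forced by completeness need not have anything to do with $R_i$. Second, and more seriously, your final bookkeeping step -- the adjacent-pair invariant and the span argument over $\Z^{E\times E}$ -- is acknowledged to be unfinished, and the obstacle you identify (the invariant is not a monoid homomorphism) is real and not a technicality: without homomorphy you cannot conclude that a relation $(s,s')$ satisfied in every commutative diagram has vanishing invariant, so the spanning requirement never gets off the ground. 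The Hall-matching alternative is also left as a sketch. The paper avoids all of this with a much lighter tool: take the labeling into $\Mat_3(\R)$ that sends one specific pair $a\mapsto E_{1,2}$, $b\mapsto E_{2,3}$ and everything else to $\mathbf{0}$, so the diagram is $3$-vanishing and the \emph{only} nonzero length-$2$ product is $l(a)l(b)$. Every relation of a minimal complete system has both sides of length $\geq 2$ (Lemma~\ref{claim_1}), so the labeling satisfies every relation whose sides avoid the literal sequence $(a,b)$; by pigeonhole among the $2\Rh(G)$ pairwise-distinct consecutive pairs of a disjoint family, if $|\RR|<\Rh(G)$ one can pick $(a,b)$ untouched by any side of any relation, giving a non-commutative diagram on which $\RR$ holds. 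No universal invariant is needed.

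\textbf{Disjointness of the two counts.} You claim the rhomboid witnesses ``can be chosen with zero loop-count and are thus disjoint from $\RR_\ell$.'' This is not justified: a relation in $\RR$ can have both unequal loop counts and nontrivial non-loop content, and nothing in your construction forces the rhomboid witnesses to avoid the former. The paper handles this structurally (Lemma~\ref{lmln}): under the labeling that sends loops to $1$ and non-loops to $0$, every relation of a complete system is forced to be either all-loop on both sides or to contain a non-loop edge on both sides, giving the clean partition $\RR=\RR_L\sqcup\RR_N$ on which the two lower bounds are proved separately. You need something of this kind to avoid double-counting.

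In short: the decomposition idea is right and the loop half is essentially recoverable, but the rhomboid half as proposed does not close, and the mechanism the paper uses (a degenerate, $3$-vanishing labeling targeting a single uncovered pair, plus a pigeonhole over the disjoint family) is a genuinely different and simpler route that you should adopt.
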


\begin{theorem}\label{mult_amount_theorem}
    Let $G$ be a quasi-acyclic $2$-path-bounded graph without multiple edges and triangles. Then $\nu(G) \geq \Rh(G)$.
\end{theorem}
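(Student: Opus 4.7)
The plan is to reduce to the loop-free case, show that a minimal complete system of relations then consists entirely of rhomboid relations, and match their count against $\Rh(G)$ via a spanning-forest argument.

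First, I would reduce to the case where $G$ has no loops. Given any complete system $\RR$ for $G$, let $\RR'$ be obtained from $\RR$ by deleting every occurrence of a loop-edge from every edge sequence. Lemma~\ref{nu_is_monotonous} gives $m(\RR') \leq m(\RR)$, and specializing arbitrary labelings to send each loop to $\mathbf{1}_M$ shows that $\RR'$ is a complete system for the loop-removed graph $G_{\mathrm{nl}}$. Since no rhomboid edge can be a loop, $\Rh(G_{\mathrm{nl}}) = \Rh(G)$, and the structural hypotheses on $G$ persist for $G_{\mathrm{nl}}$, so it suffices to establish $\nu(G) \geq \Rh(G)$ under the additional assumption that $G$ is loop-free.

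Next, by Corollary~\ref{minimal_nu} I fix a minimal complete system $\RR$ with $m(\RR) = \nu(G)$. The central structural claim is that every relation in $\RR$ has both sides of length exactly $2$ and corresponds to a rhomboid. Under the hypotheses, the only non-identical same-endpoint pairs of paths in $G$ are pairs of length-$2$ paths that share both endpoints: any other length combination would force a loop, a multiple edge, a triangle, or a non-loop path of length at least $3$, all of which are excluded. Hence the congruence on edge sequences generated by commutativity of $G$ is produced by length-preserving length-$2$ identifications, so every valid relation $(s,s')$ satisfies $|s| = |s'|$, and there are no non-trivial length-$0$ or length-$1$ relations. A non-trivial length-$k \geq 3$ relation would be a congruence-consequence of length-$2$ rhomboid relations, and since congruence closure cannot shorten sequences, these length-$2$ relations must already lie in $\RR$, making the longer one redundant and contradicting minimality. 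Finally, under our hypotheses any two distinct length-$2$ paths with identical endpoints themselves form a rhomboid, so every relation in $\RR$ is a rhomboid relation.

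I would then assemble $\RR$ into a graph $\Gamma$ whose vertices are all length-$2$ paths of $G$ and whose edges are the relations of $\RR$. Denote by $P_{u,w}$ the set of length-$2$ paths from $u$ to $w$. For each $(u,w)$ with $|P_{u,w}| \geq 2$, completeness forces $P_{u,w}$ to lie in one connected component of $\Gamma$, and the minimality argument of Lemma~\ref{complete_ineq} forces this component to be a tree on $P_{u,w}$, contributing $|P_{u,w}| - 1$ edges to $\RR$. Since any two distinct paths in $P_{u,w}$ form a rhomboid, and rhomboids from different $(u,w)$ pairs are automatically pairwise disjoint, $\Rh(G) = \sum_{(u,w):|P_{u,w}|\geq 2} \lfloor |P_{u,w}|/2 \rfloor$. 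The inequality $k - 1 \geq \lfloor k/2 \rfloor$ for $k \geq 2$ yields $|\RR| \geq \Rh(G)$. Because every relation in $\RR$ has both sides of length $\geq 2$, Corollary~\ref{rb_is_r} gives $\nu(G) = m(\RR) \geq |\RR| \geq \Rh(G)$, as required.

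The hardest step will be the structural claim above that every relation in a minimal complete system is a rhomboid relation: verifying it requires carefully tracking how the commutativity congruence on edge sequences is generated under the five structural hypotheses, and in particular ruling out any long ``shortcut'' relations that might conceivably replace several rhomboid relations in a more compact generating set.
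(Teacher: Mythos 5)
Your proof is correct in substance and follows the same outer scaffolding as the paper --- reduce to the loop-free case, pick a minimal complete system $\RR$ realizing $\nu(G) = m(\RR)$, establish $|\RR| \geq \Rh(G)$, and close via Corollary~\ref{rb_is_r} --- but the central step $|\RR| \geq \Rh(G)$ is argued by a genuinely different route. The paper proves it in Proposition~\ref{no_loops} by exhibiting an explicit $3$-vanishing labeling into $\Mat_3(\R)$ that falsifies a single rhomboid while satisfying every relation of any $\RR$ with $|\RR| < \Rh(G)$ (using only the weak dichotomy of Lemma~\ref{claim_1}). You instead reason about the congruence on the free monoid over $E$: the commutativity congruence is generated by length-$2$ rhomboid pairs, hence is length-preserving, so derivations of length-$2$ equivalences can invoke only length-$2$ relations of $\RR$; a spanning-forest count on each endpoint class $P_{u,w}$ then yields $|\RR| \geq \sum (|P_{u,w}|-1) \geq \sum \lfloor |P_{u,w}|/2 \rfloor = \Rh(G)$. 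Your route avoids the matrix construction and gives a slightly sharper intermediate bound; the paper's route is more self-contained and does not rely on tacit facts about congruence generation and length-preservation in free monoids.

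One imprecision worth flagging: when ruling out relations of length $k \geq 3$ you write that the length-$2$ rhomboid relations implying them ``must already lie in $\RR$.'' That is not literally true --- the rhomboid pairs need only be \emph{derivable} from $\RR$, not members of it. The salvage is your own length-preservation observation: since every length-$2$ rhomboid equivalence is derived from $\RR$ using only $\RR$'s length-$2$ relations (a longer relation can never occur as a subword of a length-$2$ word), the length-$2$ part of $\RR$ already generates the whole commutativity congruence, so deleting a longer relation leaves completeness intact, contradicting minimality. Alternatively you can drop this structural claim entirely: the spanning-forest argument already lower-bounds the number of length-$2$ relations by $\Rh(G)$, which suffices, and the hypothesis $|s|,|s'|\geq 2$ needed for Corollary~\ref{rb_is_r} is exactly what Lemma~\ref{claim_1} delivers without asserting that $\RR$ contains no longer relations.
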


Let us now deduce Theorems \ref{lower_bound} and \ref{lower_bound_mult} from Theorems \ref{eq_amount_theorem} and \ref{mult_amount_theorem}.

\begin{proof}[Proof of Theorem \ref{lower_bound}]
    Theorem \ref{nu_ge} implies that for any integers $n, m \geq 4$ there exists a graph $G = (V, E)$ with $|V| = n$ and $|E| = m$, such that the inequality 
     \[
    \Rh(G) + \L(G) \geq 2^{-14} \cdot \left(\min(m, n^2) \cdot \min(m, n) + m\right)
    \]
    holds and such that $G \cong T(n_1, n_2, n_3, n_0, e)$ for some integers $n_1, n_2, n_3, n_0, e$. By Remark \ref{remark_triploid}, the graph $G$ satisfies the conditions of Theorem \ref{eq_amount_theorem}, hence we have $\eta(G) \geq \Rh(G) + \L(G)$. Therefore, we obtain
    \[
    \eta(G) \geq \Rh(G) + \L(G) \geq 2^{-14} \cdot \left(\min(m, n^2) \cdot \min(m, n) + m\right).
    \]
\end{proof}

\begin{proof}[Proof of Theorem \ref{lower_bound_mult}]
    Theorem \ref{nu_ge} implies that for any integers $n, m \geq 4$ there exists a graph $G = (V, E)$ with $|V| = n$ and $|E| = m$, such that the inequality 
    \[
    \Rh(G) \geq C \cdot \left(\min(m, n^2) \cdot \min(m, n)\right).
    \]
    holds and such that $G \cong T(n_1, n_2, n_3, n_0, e)$ for some integers $n_1, n_2, n_3, n_0, e$. By Remark \ref{remark_triploid}, the graph $G$ satisfies the conditions of Theorem \ref{mult_amount_theorem}, hence we have $\nu(G) \geq \Rh(G)$. Therefore, we obtain
    \[
    \nu(G) \geq \Rh(G) \geq 2^{-14} \cdot \left(\min(m, n^2) \cdot \min(m, n)\right).
    \]
\end{proof}

Our strategy is first to prove Theorems \ref{eq_amount_theorem} and \ref{mult_amount_theorem} for loopless quasi-acyclic graphs (i.e., acyclic), and then extend to all quasi-acyclic graphs.

\subsection{Loopless case} Our goal is to prove the following two propositions.

\begin{proposition} \label{no_loops}
Let $G = (V, E)$ be a quasi-acyclic $2$-path-bounded graph without loops, multiple edges and triangles. Then $\eta(G) \geq \Rh(G)$. 
\end{proposition}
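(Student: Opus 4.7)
The plan is to first reduce the problem to a clean statement about congruences on the free monoid over the edge set, and then to exhibit an injection from the $\Rh(G)$ pairwise disjoint rhomboids into any complete system of relations $\mathcal{R}$.

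First, I exploit the hypotheses on $G$. Since $G$ is loopless, quasi-acyclic, $2$-path-bounded, and has neither multiple edges nor triangles, every oriented path in $G$ has length at most $2$, and any two distinct paths sharing both endpoints must both have length $2$ and form a rhomboid: length-$1$ vs.\ length-$1$ coincidences would be multiple edges, a length-$1$ vs.\ length-$2$ pair would produce a triangle, and the required pairwise distinctness of the four corners in the remaining length-$2$ vs.\ length-$2$ case follows from the absence of loops. Consequently the only nontrivial commutativity constraints for a diagram over $G$ are the rhomboid relations $l(a)l(b)=l(c)l(d)$; equivalently, $\mathcal{R}$ is complete if and only if the congruence $\langle\mathcal{R}\rangle$ on the free monoid $F$ over $E$ contains the length-$2$ relation $(ab,cd)$ for every rhomboid $(a,b,c,d)$ of $G$.

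Now fix a pairwise disjoint family $R_1,\ldots,R_k$ with $R_i=(a_i,b_i,c_i,d_i)$ and $k=\Rh(G)$. The key combinatorial input is that in $F$, starting from the length-$2$ word $(a_i,b_i)$, no rhomboid substitution associated with any other $R_j$ from the disjoint family is applicable, since the disjointness conditions $(1)$ and $(2)$ rule out $(a_i,b_i)$ matching either leg of $R_j$. My plan is to convert this fact into a lower bound by associating to each $i$ a certificate relation $\rho_i\in\mathcal{R}$ drawn from a minimum-length derivation $(a_i b_i)=w_0\to w_1\to\cdots\to w_m=(c_i d_i)$ via substitutions from $\mathcal{R}$ (which exists by completeness of $\mathcal{R}$); specifically, $\rho_i$ is defined as the relation used in the first step of this derivation. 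Since the matching subword at this first step lies inside the length-$2$ word $(a_i,b_i)$, it must be either the full word $(a_i,b_i)$, a single letter $a_i$ or $b_i$, or the empty word.

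The heart of the argument is to show $i\mapsto\rho_i$ is injective, which immediately yields $|\mathcal{R}|\geq k=\Rh(G)$ and hence $\eta(G)\geq\Rh(G)$. The full-word case is immediate from disjointness condition $(1)$, which ensures the words $(a_i,b_i)$ are pairwise distinct. The empty-word (pure insertion) case is eliminated by the minimality of the chosen derivation: any leading insertion must be cancelled later in the derivation, and such an insertion--cancellation round can always be shortened. The delicate single-letter case must be handled using the remaining graph hypotheses---no triangles, no multiple edges, and $2$-path-boundedness---to rule out a single relation of the form $((a_i),s')\in\mathcal{R}$ simultaneously serving as a first-step substitution for derivations terminating at two distinct targets $(c_i,d_i)$ and $(c_j,d_j)$ whose $2$-paths are disjoint by hypothesis.

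The main obstacle will be precisely this single-letter subcase, where a relation $((a_i),s')$ could a priori be reused across several rhomboids sharing a common starting edge. Overcoming it requires exploiting the asymmetry between the left leg $(a_i,b_i)$ and the right leg $(c_i,d_i)$ of each rhomboid, together with the ``thinness'' of the graph (no triangles or multiple edges) that prevents any length-$1$ substitution from offering two genuinely different continuations reaching distinct disjoint targets.
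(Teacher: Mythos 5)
Your approach is genuinely different from the paper's, and it does not close. You try to build an injection from the disjoint rhomboids into an arbitrary complete system $\RR$ by, for each rhomboid, tracking the first step of a minimum-length derivation $(a_i,b_i)\to\cdots\to(c_i,d_i)$ in the free monoid modulo $\langle\RR\rangle$. This reformulation of completeness in terms of the monoid congruence is correct, but the injectivity argument has gaps in all three of your cases, not just the one you flag.

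In the \emph{full-word} case you claim injectivity is ``immediate from disjointness condition (1)''. Pairwise distinctness of the words $(a_i,b_i)$ does not give distinctness of the relations $\rho_i$: if $(a_i,b_i)$ and $(a_j,b_j)$ share endpoints (which disjointness of rhomboids allows), the single relation $\rho=\bigl((a_i,b_i),(a_j,b_j)\bigr)$ is a legitimate commutativity constraint, and it can simultaneously be the first step of the $i$-th derivation (applied forward) and of the $j$-th derivation (applied backward), giving $\rho_i=\rho_j$. In the \emph{empty-word} case, your shortening argument assumes the inserted factor is later ``cancelled''; but an insertion can instead be \emph{absorbed} by a subsequent relation whose left-hand side contains it, e.g.\ $(a_i,b_i)\xrightarrow{((),x)}(a_i,x,b_i)\xrightarrow{((a_i,x,b_i),(c_i,d_i))}(c_i,d_i)$, a length-two derivation that cannot be shortened the way you describe. (Pure-insertion relations $((),s)$ can in fact be excluded, but by a labeling argument à la Lemma~\ref{NZedge}--\ref{NZef}, not by the derivation-length argument you give.) Finally you openly leave the \emph{single-letter} case, which you correctly identify as the hard part, unresolved.

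The paper avoids all of this. It passes to a \emph{minimal} complete system $\RR$ and proves Lemma~\ref{claim_1}: in any such $\RR$ every relation has both sides of length exactly~$2$ or both of length $\geq 3$. This is established not by manipulating derivations but by exhibiting, for each forbidden shape, an explicit $3$-vanishing commutative diagram over a matrix ring that violates it (Lemmas~\ref{NZedge}--\ref{efneqg}). With Lemma~\ref{claim_1} in hand, the proof is a pigeonhole: if $|\RR|<\Rh(G)$ then some leg $(a,b)$ of a rhomboid appears on neither side of any relation, and the labeling $l(a)=E_{1,2}$, $l(b)=E_{2,3}$, $l(\cdot)=0$ otherwise satisfies every relation of $\RR$ (each side being $0$ by $3$-vanishing or by $\neq(a,b)$) yet fails commutativity for that rhomboid. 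You would need an analogue of Lemma~\ref{claim_1} to dispose of your single-letter and empty-word cases --- and once you have it, the derivation machinery is no longer needed, since every first step is forced to be a full length-$2$ substitution and the pigeonhole already gives the bound directly.
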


\begin{proposition} \label{no_loops_mult}
Let $G = (V, E)$ be a quasi-acyclic $2$-path-bounded graph without loops, multiple edges and triangles. Then $\nu(G) \geq \Rh(G)$. 
\end{proposition}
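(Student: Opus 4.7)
The plan is to combine Proposition \ref{no_loops} with the structural results of Section \ref{S4}. By Corollary \ref{minimal_nu}, I would pick a minimal complete system of relations $\RR$ for $G$ realizing $\nu(G)=m(\RR)$, and then proceed in three steps: (i) verify that every relation $(s,s')\in\RR$ satisfies $|s|,|s'|\geq 2$; (ii) apply Corollary \ref{rb_is_r} to obtain $m(\RR)\geq|\RR|$; and (iii) chain $|\RR|\geq\eta(G)\geq\Rh(G)$ via the definition of $\eta$ together with Proposition \ref{no_loops}.

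The central structural observation supporting step (i) is that, under the present hypotheses, the commutativity congruence $\sim$ on the free monoid on $E$ should be \emph{length-preserving}. Indeed, any two paths in $G$ with common endpoints must have a common length: the absence of loops together with quasi-acyclicity rules out a length-$0$ vs.\ length-$\geq 1$ clash at a single vertex; a length-$1$ vs.\ length-$1$ clash would be a multiple edge; a length-$1$ vs.\ length-$2$ clash would be a triangle; and $2$-path-boundedness forbids paths of length $\geq 3$. Consequently the generating relations of $\sim$ all come from rhomboids and thus relate length-$2$ words exclusively. Since compatibility of $\sim$ with concatenation can only enlarge lengths, no non-trivial length-$\leq 1$ consequences are produced, and the restriction of $\sim$ to length-$1$ words is the diagonal. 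This forces $|s|=|s'|$ for every $(s,s')\in\RR$; the trivial pairs $((),())$ and $((e),(e))$ are excluded from a minimal system by definition; and a non-trivial length-$1$ pair cannot occur. Together these would give $|s|,|s'|\geq 2$ throughout $\RR$.

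Step (ii) is then immediate from Corollary \ref{rb_is_r}: $m(\RR)\geq|\RR|$. For step (iii), $|\RR|\geq\eta(G)$ follows from the definition of the commutativity rank (as $\RR$ is, in particular, a complete system for $G$), and $\eta(G)\geq\Rh(G)$ is exactly Proposition \ref{no_loops}. Chaining these inequalities produces $\nu(G)=m(\RR)\geq|\RR|\geq\eta(G)\geq\Rh(G)$, as required.

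The bulk of the genuine work lies in Proposition \ref{no_loops}, where one needs labelings isolating individual rhomboids in order to establish the combinatorial lower bound $\eta(G)\geq\Rh(G)$; this I expect to be the main obstacle in the overall development. Once that bound is available, the multiplication-rank statement follows by a short bookkeeping argument built on Section \ref{S4}; the one delicate technical input is the length-preservation argument above, which is precisely what enables Corollary \ref{rb_is_r} to apply.
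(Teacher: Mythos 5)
Your proposal follows the same overall skeleton as the paper's proof: take a minimal complete system $\RR$ realizing $\nu(G)=m(\RR)$ via Corollary \ref{minimal_nu}, show that both sides of every relation in $\RR$ have length at least $2$, feed this into Corollary \ref{rb_is_r} to get $m(\RR)\geq|\RR|$, and finally chain with Proposition \ref{no_loops} to reach $\Rh(G)$. Where you diverge is in step (ii). The paper invokes Lemma \ref{claim_1}, whose proof is a case-by-case construction of explicit $3$-vanishing matrix labelings (Lemmas \ref{NZedge}--\ref{efneqg}); in fact Lemma \ref{claim_1} gives a slightly sharper dichotomy ($|s|=|s'|=2$ or both $\geq 3$) that is reused elsewhere, notably in the proof of Proposition \ref{no_loops} itself. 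You instead argue that the ``commutativity congruence'' on edge sequences is length-preserving because, under the stated hypotheses, the only nontrivial coincident-endpoint path pairs are rhomboids, all of type $2$-vs-$2$, and a congruence generated by length-preserving pairs is length-preserving with diagonal restriction on words of length $\leq 1$. This is a valid and arguably more conceptual route to the bound you actually need ($|s|,|s'|\geq 2$). However, it leans implicitly on the identification of the semantic congruence ($s\sim s'$ iff every commutative labeling agrees) with the syntactic congruence generated by path relations; that equality is standard (consider the universal labeling into the quotient of the free monoid on $E$), but it is used without justification, and the phrasing ``generating relations of $\sim$ all come from rhomboids'' papers over it. If you wanted a quick, fully concrete alternative in the spirit of the paper, note that the single labeling $l:E\to(\mathbb N,+)$ with $l(e)=1$ is commutative under these hypotheses and immediately forces $|s|=|s'|$ for every relation, after which Lemma \ref{eneqf} (or its one-line proof) rules out the nontrivial length-$1$ case.
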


Before we prove Propositions \ref{no_loops} and \ref{no_loops_mult} we need several lemmas. First, let us make the following observation, claiming that for the graphs satisfying conditions of Propositions \ref{no_loops} and \ref{no_loops_mult} it is enough to verify commutativity only for paths of length $2$.

\begin{lemma} \label{length_2}
    Let $G = (V, E)$ be a quasi-acyclic $2$-path-bounded graph without loops, multiple edges and triangles.
    Then a diagram $D = (G, l)$ is commutative if and only if for any two paths $p_1, p_2$ in $G$ with the same endpoints and satisfying $|p_1| = |p_2| = 2$ we have $l(p_1) = l(p_2)$. 
\end{lemma}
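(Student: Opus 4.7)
The forward direction is immediate from Definition \ref{defCD}, so I focus on the converse. Assume that $l(p_1) = l(p_2)$ whenever $p_1, p_2$ are length-$2$ paths with matching endpoints, and let $p_1, p_2$ be two arbitrary paths with $o(p_1) = o(p_2) = u$ and $t(p_1) = t(p_2) = v$. Because $G$ has no loops, every path in $G$ trivially ``avoids loops'', so the $2$-path-boundedness hypothesis forces $|p_1|, |p_2| \in \{0, 1, 2\}$. The plan is a short case analysis on $(|p_1|, |p_2|) \in \{0, 1, 2\}^2$, in which each remaining structural hypothesis rules out exactly one mixed-length case, and the no-multiple-edges hypothesis handles the length-$1$ equal-length case.

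I would first rule out the three mixed-length pairs $(0, 1)$, $(0, 2)$, $(1, 2)$ (the other three being symmetric). A length-$0$ vs.\ length-$1$ configuration would force the single edge in the length-$1$ path to have coincident endpoints, hence to be a loop --- impossible. A length-$0$ vs.\ length-$2$ configuration would give a length-$2$ path $(e_1, e_2)$ with $o(e_1) = t(e_2) = u$; writing $w := t(e_1) = o(e_2)$, either $w = u$ (so $e_1$ is a loop) or $w \neq u$ (so $u \to w \to u$ is a directed cycle through two distinct vertices, contradicting quasi-acyclicity). Finally, a length-$1$ vs.\ length-$2$ configuration $p_1 = (e)$, $p_2 = (e_1, e_2)$ produces edges with $o(e) = o(e_1) = u$, $t(e_1) = o(e_2) =: w$, $t(e_2) = t(e) = v$, none of them a loop --- this is exactly a triangle, contradicting the no-triangle hypothesis.

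After the mixed cases are discarded one is left with $|p_1| = |p_2|$, and the three remaining possibilities are immediate: both lengths $0$ give $l(p_1) = l(p_2) = \mathbf{1}_M$; both lengths $1$ force the two edges to coincide because they share origin and tail, cannot be loops, and $G$ has no multiple edges; both lengths $2$ is exactly the standing hypothesis. The argument is purely combinatorial, so there is no real obstacle; the only point that calls for care is to verify that each of the four structural hypotheses on $G$ is actually used to cover its specific forbidden case, which doubles as a check that no hypothesis is superfluous.
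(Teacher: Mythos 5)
Your proof is correct and follows essentially the same case analysis as the paper's: both arguments reduce to comparing path lengths in $\{0,1,2\}$, use no-triangles for the $(1,2)$ mismatch, no-multiple-edges for $(1,1)$, and acyclicity (quasi-acyclic plus no loops) to kill the $(0,\geq 1)$ cases. Your treatment of the $(0,2)$ case is slightly more explicit than the paper's one-line appeal to acyclicity, but there is no substantive difference.
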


\begin{proof}
    It is enough to check that if for any two paths $p_1, p_2$ in $G$ with the same endpoints and satisfying $|p_1| = |p_2| = 2$ we have $l(p_1) = l(p_2)$, then $D$ is commutative.
    
    Indeed, let $p, p'$ be any pair of paths in $G$. Since $G$ has no loops and  $2$-path-bounded, then $|p|, |p'| \leq 2$. If $|p| = |p'| = 2$, then $l(p) = l(p')$ by assumption. The case $|p| = 1$, $|p'| = 2$ or $|p| = 2$, $|p'| = 1$ is impossible since $G$ does not have triangles. If $|p| = |p'| = 1$, then $p = p'$ since $G$ has no multiple edges, so $l(p) = l(p')$ holds automatically.

    The case $|p| = 0$, $|p|' \neq 0$ or $|p| \neq 0$, $|p'| = 0$ is impossible since $G$ is quasi-acyclic and has no loops (i.e., acyclic). Finally, if $|p| = |p'| = 0$, then by definition we have $l(p) = \mathbf{1}_M = l(p')$.
\end{proof}

Let us introduce some notation.
Here and throughout, when we say that a diagram takes values in a ring $R$, we implicitly view $R$ as a monoid under multiplication. A diagram $D = (G, l)$ with values in a ring $R$ is called $3$-\textit{vanishing} if for any (not necessarily consecutive) three edges $e_1, e_2, e_3 \in E$ we have $l(e_1)l(e_2)l(e_3) = \mathbf{0}_R$. 

By $\mathrm{Mat}_k(\R)$ we denote the ring of $(k \times k)$-matrices over $\mathbb{R}$. For $i, j \in \{1, \dots, k\}$, we denote by $E_{i, j} \in \mathrm{Mat}_k(\R)$ the standard matrix unit.

\begin{lemma} \label{NZedge}
    Let $G = (V, E)$ be a quasi-acyclic $2$-path-bounded graph without loops, multiple edges and triangles.
    Then for any edge $e \in E$ there exists a $3$-vanishing commutative diagram $D = (G, l)$ with values in a ring $R$ such that $l(e) \neq \mathbf{0}_R$ and $l(e') = \mathbf{0}_R$ for any other edge $e' \neq e$.
\end{lemma}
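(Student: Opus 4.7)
The plan is to exhibit an explicit labeling of $G$ into the matrix ring $\mathrm{Mat}_2(\R)$ for which all the required properties can be verified by direct inspection. Concretely, I would set $R = \mathrm{Mat}_2(\R)$, $l(e) = E_{1,2}$, and $l(e') = \mathbf{0}_R$ for every edge $e' \neq e$. Since $E_{1,2} \neq \mathbf{0}_R$, the nondegeneracy requirement $l(e) \neq \mathbf{0}_R$ is immediate, and the condition $l(e') = \mathbf{0}_R$ for $e' \neq e$ holds by construction.

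The $3$-vanishing condition is then almost automatic. In any three-fold product $l(e_1)\,l(e_2)\,l(e_3)$, either some factor corresponds to an edge different from $e$ and is therefore zero, or all three factors equal $E_{1,2}$; in the latter case the product equals $E_{1,2}^{3} = \mathbf{0}_R$, because $E_{1,2}^{2} = \mathbf{0}_R$ in $\mathrm{Mat}_2(\R)$.

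Commutativity is checked by invoking Lemma~\ref{length_2}, which under the structural hypotheses on $G$ (loop-free, triangle-free, no multiple edges, $2$-path-bounded, quasi-acyclic) reduces the verification to equality of labels on pairs of length-$2$ paths sharing the same endpoints. For any length-$2$ path $(f_1,f_2)$ the relation $t(f_1) = o(f_2)$ combined with the loop-free hypothesis forbids $f_1 = f_2 = e$; hence at least one of $l(f_1), l(f_2)$ equals $\mathbf{0}_R$, and so $l((f_1,f_2)) = \mathbf{0}_R$. All length-$2$ paths therefore carry label $\mathbf{0}_R$ and trivially match each other, yielding commutativity of $D$.

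The main conceptual leverage comes from Lemma~\ref{length_2}: it is what allows us to ignore every path other than one of length exactly two, and it is powered precisely by the combined structural hypotheses on $G$. Once that reduction is accepted, the rest of the argument reduces to the single observation that a non-loop edge cannot appear as two consecutive edges of any path, which kills every length-$2$ product in sight and leaves only the single non-trivial label $l(e) = E_{1,2}$ alive.
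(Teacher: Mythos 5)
Your proposal is correct and follows essentially the same route as the paper: same ring $\mathrm{Mat}_2(\R)$, same labeling, same reduction via Lemma~\ref{length_2} to length-$2$ paths, and the same observation that $E_{1,2}^2 = \mathbf{0}_R$ handles the $3$-vanishing condition. The one tiny difference is cosmetic: for the commutativity step you appeal to the loop-free hypothesis to rule out $f_1 = f_2 = e$ on a length-$2$ path, whereas the paper simply observes that $r_1 r_2 = \mathbf{0}_R$ for all $r_1, r_2 \in \{E_{1,2}, \mathbf{0}_R\}$ (since $E_{1,2}^2 = \mathbf{0}_R$), which makes every length-$2$ product vanish without any structural input from $G$ at that point.
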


\begin{proof}
    We take $R = \mathrm{Mat}_2(\R)$ and consider the following labeling $l : E \to \mathrm{Mat}_2(\R)$

    $$l(x) = \begin{cases}
        E_{1, 2} & \text{if $x = e$}\\
        \mathbf{0}_R & \text{otherwise.}
    \end{cases}$$
    
    The diagram $\D = (G, l)$ is $3$-vanishing since for any $r_1, r_2, r_3 \in \{E_{1, 2}, \mathbf{0}_R\}$ we have $r_1 \cdot r_2 \cdot r_3 = \mathbf{0}_R$.
    
    Let us prove commutativity of $D = (G, l)$. By Lemma \ref{length_2}, it is enough to check that for any paths $p_1, p_2$ in $G$ with same endpoints and satisfying $|p_1| = |p_2| = 2$ we have $l(p_1) = l(p_2)$. This is straightforward since for any $r_1, r_2 \in \{E_{1, 2}, \mathbf{0}_R\}$ we have $r_1 \cdot r_2 = \mathbf{0}_R$.
\end{proof}

\begin{lemma} \label{eneqf}
   Let $G = (V, E)$ be a quasi-acyclic $2$-path-bounded graph without loops, multiple edges and triangles.
   Then for any distinct edges $e, e' \in E$ there exists a $3$-vanishing commutative diagram $D = (G, l)$ with values in a ring $R$ such that $l(e) \neq l(e')$.
\end{lemma}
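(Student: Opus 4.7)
The plan is to observe that this lemma is essentially an immediate corollary of Lemma \ref{NZedge}, so no new construction is needed. Lemma \ref{NZedge} already produces, for any single prescribed edge of $G$, a $3$-vanishing commutative diagram with values in $R = \mathrm{Mat}_2(\mathbb{R})$ in which the chosen edge is labeled by the matrix unit $E_{1,2} \neq \mathbf{0}_R$ and every other edge is labeled by $\mathbf{0}_R$.

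Given distinct edges $e, e' \in E$, I would simply apply Lemma \ref{NZedge} to the edge $e$. The resulting labeling $l\colon E \to \mathrm{Mat}_2(\mathbb{R})$ satisfies $l(e) = E_{1,2}$ and, since $e' \neq e$, also $l(e') = \mathbf{0}_R$. The asymmetry between zero and nonzero then forces $l(e) \neq l(e')$. The $3$-vanishing property and the commutativity of the diagram are inherited directly from the conclusion of Lemma \ref{NZedge}, and the hypotheses on $G$ (quasi-acyclic, $2$-path-bounded, no loops, multiple edges, or triangles) are exactly the same as those needed to invoke that lemma.

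There is no substantive obstacle here. The only conceptual point is to notice that the statement of Lemma \ref{eneqf} is weaker than Lemma \ref{NZedge}: it merely asks for a diagram separating the labels of two edges, which follows at once from the stronger statement that one can isolate the nonzero label on a single prescribed edge. I expect this separation property to serve later as a convenient black box for distinguishing labels when ruling out redundancy of individual relations in a complete system.
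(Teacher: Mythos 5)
Your proposal is correct and matches the paper's proof exactly: the paper's proof of Lemma \ref{eneqf} is the single line ``Immediately follows from Lemma \ref{NZedge},'' and your argument spells out precisely why that deduction works (apply Lemma \ref{NZedge} to $e$, note $l(e) = E_{1,2} \neq \mathbf{0}_R = l(e')$).
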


\begin{proof}
     Immediately follows from Lemma \ref{NZedge}.
\end{proof}

\begin{lemma} \label{NZef}
    Let $G = (V, E)$ be a quasi-acyclic $2$-path-bounded graph without loops, multiple edges and triangles. Then for any distinct edges $e, e' \in E$ there exists a $3$-vanishing commutative diagram $D = (G, l)$ with values in a ring $R$ such that $l(e)l(e') \neq \mathbf{0}_R$.
\end{lemma}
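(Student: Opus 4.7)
The plan is to exhibit a single constant labeling that works uniformly for every pair of distinct edges. I would take the ring $R = \mathrm{Mat}_3(\R)$ and set $l(a) := M$ for every $a \in E$, where
\[
M := E_{1,2} + E_{2,3}
\]
is the standard nilpotent Jordan block of size $3$. The whole argument then rests on the two elementary identities $M^2 = E_{1,3} \neq \mathbf{0}_R$ and $M^3 = \mathbf{0}_R$.

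From these, the first two required properties are essentially free. The $3$-vanishing property follows from $M^3 = \mathbf{0}_R$: for any three edges $e_1, e_2, e_3 \in E$ (not assumed to form a path), $l(e_1)l(e_2)l(e_3) = M^3 = \mathbf{0}_R$. The nonvanishing condition $l(e)l(e') = M^2 = E_{1,3} \neq \mathbf{0}_R$ also holds for every pair of distinct edges, with no case distinction on whether $(e, e')$ is composable. For commutativity I would invoke Lemma \ref{length_2}, which under the hypotheses on $G$ reduces the check to same-endpoint paths of length exactly $2$; under the constant labeling every length-$2$ path is assigned $M \cdot M = E_{1,3}$, so equality is automatic.

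I do not anticipate a genuine obstacle: Lemma \ref{length_2} already removes the need to compare paths of different lengths (the potentially dangerous cases length-$0$ vs length-$2$, length-$1$ vs length-$2$, and length-$0$ vs length-$1$ are excluded by quasi-acyclicity and by the absence of loops, triangles, and multiple edges), while the uniform label $M$ simultaneously satisfies the nonvanishing and the $3$-vanishing constraints. This construction is strictly simpler than the one used in Lemma \ref{NZedge}: there, each edge had to be distinguished in the labeling, whereas here only the product of two specific labels must be nonzero, which a single constant assignment accomplishes.
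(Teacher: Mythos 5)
Your proof is correct, and it is genuinely simpler than the paper's. The paper proves Lemma~\ref{NZef} by a case split on whether $t(e) = o(e')$: in the consecutive case it labels edges with $o(x) = o(e)$ by $E_{1,2}$, edges with $t(x) = t(e')$ by $E_{2,3}$, and all others by $\mathbf{0}_R$ (using the absence of triangles and quasi-acyclicity to show the labeling is well-defined and the diagram commutes); in the non-consecutive case it labels only $e$ and $e'$ nonzero. Your single constant labeling $l \equiv M = E_{1,2}+E_{2,3}$ collapses both cases: $M^3 = \mathbf{0}_R$ gives $3$-vanishing, $M^2 = E_{1,3} \neq \mathbf{0}_R$ gives the nonvanishing condition for every pair simultaneously, and after Lemma~\ref{length_2} reduces commutativity to comparing same-endpoint length-$2$ paths, the constant labeling makes every such comparison an equality $M^2 = M^2$. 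The paper's construction carries more information (it isolates a specific pair of consecutive edges, which is exactly what is needed later in the proof of Proposition~\ref{no_loops}), but for the purposes of Lemma~\ref{NZef} alone your uniform construction is cleaner and avoids the well-definedness check and the dependence on $t(e)$ versus $o(e')$. One cosmetic point: your argument shows the conclusion for all (not only distinct) pairs $e,e'$, which is stronger than required but harmless.
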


\begin{proof}
    \textbf{Case 1.} $t(e) = o(e')$.

    We take $R = \mathrm{Mat}_3(\R)$ and consider the following labeling $l : E \to \mathrm{Mat}_3(\R)$

    $$l(x) = \begin{cases}
        E_{1, 2} & \text{if $o(x) = o(e)$}\\
        E_{2, 3} & \text{if $t(x) = t(e')$}\\
        \mathbf{0}_R & \text{otherwise.}
    \end{cases}
    $$

    Note that $l$ is well-defined. Indeed, since $G$ does not contain triangles, it follows that there is no edge $x \in E$ with $o(x) = o(e)$ and $t(x) = t(e')$. We also note that $o(e) \neq t(e')$ since $G$ is quasi-acyclic.
    The diagram $\D = (G, l)$ is $3$-vanishing since for any $r_1, r_2, r_3 \in \{E_{1, 2}, E_{2, 3}, \mathbf{0}_R\}$ we have $r_1 \cdot r_2 \cdot r_3 = \mathbf{0}_R$.
    
    Let us prove commutativity of $D = (G, l)$.
    By Lemma \ref{length_2}, it is enough to check that for any paths $p_1, p_2$ in $G$ with same endpoints and satisfying $|p_1| = |p_2| = 2$ we have $l(p_1) = l(p_2)$.  If $o(p_1) = o(e)$ and $t(p_1) = t(e')$, by construction of $l$ we have that $l(p_1) = E_{1, 2} E_{2, 3} = l(p_2)$. Otherwise, since $o(e) \neq t(e')$, both $p_1$ and $p_2$ contain at least one edge $f$ such that $o(f) \neq o(e)$ and $t(f) \neq t(e')$. By definition of $l$, it follows that $l(f) = \mathbf{0}_R$, so $l(p_1) = \mathbf{0}_R = l(p_2)$.

    \textbf{Case 2.} $t(e) \neq o(e')$.

     We take $R = \mathrm{Mat}_3(\R)$ and consider the following labeling $l : E \to \mathrm{Mat}_3(\R)$

    $$l(x) = \begin{cases}
        E_{1, 2} & \text{if $x = e$}\\
        E_{2, 3} & \text{if $x = e'$}\\
        \mathbf{0}_R & \text{otherwise.}
    \end{cases}
    $$
    Again, the diagram $\D = (G, l)$ is $3$-vanishing since for any $r_1, r_2, r_3 \in \{E_{1, 2}, E_{2, 3}, \mathbf{0}_R\}$ we have $r_1 \cdot r_2 \cdot r_3 = \mathbf{0}_R$.

    Similar to the previous case, by Lemma \ref{length_2}, it is enough to check that for any paths $p_1, p_2$ in $G$ with same endpoints and satisfying $|p_1| = |p_2| = 2$ we have $l(p_1) = l(p_2)$.
    Since $t(e) \neq o(e')$, then for each path $p = (e_1, e_2)$ at least one of the inequalities $e_1 \neq e$, $e_2 \neq e'$ holds. Therefore, either $l(e_1) = \mathbf{0}_R$, or $l(e_2) = \mathbf{0}_R$, or $(e_1, e_2) = (e', e)$. In the first two cases we automatically have $l(p) = \mathbf{0}_R$. In the third case, we also have
    $$l(p) = l(e')l(e) = E_{2, 3} E_{1, 2} = \mathbf{0}_R.$$
    Therefore, $l(p_1) = \mathbf{0}_R = l(p_2)$. This concludes the proof.
\end{proof}

\begin{lemma} \label{efneqg}
    Let $G = (V, E)$ be a quasi-acyclic $2$-path-bounded graph without loops, multiple edges and triangles. Then for any three pairwise distinct edges $e, f, h \in E$ there exists a $3$-vanishing commutative diagram $D = (G, l)$ with values in a ring $R$ such that $l(e)l(f) \neq l(h)$.
\end{lemma}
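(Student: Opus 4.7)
The plan is to split the argument into two cases according to whether the edges $e$ and $f$ are consecutive.

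First, suppose $t(e) \neq o(f)$. In this case I would simply invoke Lemma~\ref{NZedge} applied to the edge $h$: it supplies a $3$-vanishing commutative diagram $D = (G, l)$ over some ring $R$ with $l(h) \neq \mathbf{0}_R$ and $l(x) = \mathbf{0}_R$ for every edge $x \neq h$. Since $e, f$ are both distinct from $h$, we obtain $l(e) = l(f) = \mathbf{0}_R$, so $l(e)\,l(f) = \mathbf{0}_R \neq l(h)$, as required.

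Second, suppose $t(e) = o(f)$. Here I would reuse verbatim the Case~1 construction from the proof of Lemma~\ref{NZef}: take $R = \mathrm{Mat}_3(\R)$ and set $l(x) = E_{1,2}$ if $o(x) = o(e)$, $l(x) = E_{2,3}$ if $t(x) = t(f)$, and $l(x) = \mathbf{0}_R$ otherwise. The proof of Lemma~\ref{NZef} already establishes that this labeling is well-defined (since $G$ has no triangles), that the diagram is $3$-vanishing, and that it is commutative. Now one computes $l(e)\,l(f) = E_{1,2}\, E_{2,3} = E_{1,3}$, whereas $l(h)$ necessarily lies in $\{E_{1,2},\, E_{2,3},\, \mathbf{0}_R\}$, and none of these three matrices equals $E_{1,3}$. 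Hence $l(e)\,l(f) \neq l(h)$.

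The only subtle point, and the step where the hypotheses of the lemma are genuinely used, is ruling out that $l(h)$ could equal $E_{1,3}$ in the second case. The labeling rule assigns $E_{1,3}$ to no edge by construction, and the only way $l(h)$ could fail to fall into the enumerated set $\{E_{1,2}, E_{2,3}, \mathbf{0}_R\}$ would be if both $o(h)=o(e)$ and $t(h)=t(f)$ held simultaneously, which is explicitly forbidden: together with $t(e)=o(f)$ this would make $(h, e, f)$ a triangle in $G$, contradicting the no-triangles hypothesis. With this observation the lemma follows by a direct comparison of matrix units, so I do not anticipate a real obstacle beyond bookkeeping.
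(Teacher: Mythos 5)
Your argument is correct, but the case split is unnecessary. The argument you give in your first case — invoke Lemma~\ref{NZedge} on $h$ to get $l(h) \neq \mathbf{0}_R$ while $l(x) = \mathbf{0}_R$ for every $x \neq h$, so $l(e)\,l(f) = \mathbf{0}_R \cdot \mathbf{0}_R = \mathbf{0}_R \neq l(h)$ — uses nothing beyond $e \neq h$ and $f \neq h$; in particular it does not depend on whether $t(e) = o(f)$. The paper applies exactly this one-line argument unconditionally. Your second case, reusing the $E_{1,2}/E_{2,3}$ construction from Lemma~\ref{NZef}, is a correct alternate route (and your no-triangles observation ruling out $l(h) = E_{1,3}$ is the right check), but it is redundant work: the lemma needs only $h$ to be distinguished from $e$ and $f$, not any geometric compatibility between $e$ and $f$. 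Drop the case split and your Case~1 becomes the whole proof, which is what the paper does.
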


\begin{proof}
    By Lemma \ref{NZedge} there exists a labeling $l$ such $l(h) \neq 0$ and $l(x) = 0$ for any edge $x \neq h$. Then $l(e)l(f) = \mathbf{0}_R \neq l(h)$.
\end{proof}

\begin{lemma} \label{claim_1}
    Let $G = (V, E)$ be a quasi-acyclic $2$-path-bounded graph without loops, multiple edges and triangles. Let $\RR$ be any minimal complete system of relations for $G$. Then for any $(s, s') \in \RR$ we have either $|s| = |s'| = 2$ or $|s| \geq 3$ and $|s'| \geq 3$.
\end{lemma}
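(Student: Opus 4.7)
The plan is to argue by contradiction using completeness of $\RR$: if $(s, s') \in \RR$, then $l(s) = l(s')$ must hold in every commutative diagram $(G, l)$ over every monoid, so producing a single commutative diagram violating this equation already refutes $(s, s') \in \RR$. Since minimality excludes the trivial relations $((), ())$ and $((a), (a))$, the length patterns that remain to be ruled out are: (a) $\min(|s|, |s'|) \leq 1$ outside those trivialities, and (b) exactly one of $|s|, |s'|$ equals $2$ while the other is at least $3$.

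For pattern (a), I split into subcases. If $\min(|s|, |s'|) = 0$, take the identically-$\mathbf{0}_R$ labeling in $R = \mathrm{Mat}_2(\R)$; since $G$ is acyclic (being loopless and quasi-acyclic), no nonempty path returns to its origin, so this labeling is commutative (any two nonempty paths with the same endpoints both take value $\mathbf{0}_R$), and it sends the empty sequence to $I$ and any nonempty sequence to $\mathbf{0}_R$, producing the required inequality. If $|s| = |s'| = 1$ with $s \neq s'$, Lemma \ref{eneqf} supplies the witness directly. If $|s| = 1$ with edge $e$ and $|s'| \geq 2$ (the reverse case is symmetric), apply Lemma \ref{NZedge} at $e$: then $l(s) = E_{1,2} \neq \mathbf{0}_R$, while $l(s')$ is a product of at least two factors drawn from $\{E_{1,2}, \mathbf{0}_R\}$, and therefore equals $\mathbf{0}_R$ by the nilpotency $E_{1,2}^2 = \mathbf{0}_R$ together with the absorbing property of $\mathbf{0}_R$.

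For pattern (b), suppose $|s| = 2$ with $s = (e, f)$ and $|s'| \geq 3$ (the reverse case is symmetric). Apply Lemma \ref{NZef} to $(e, f)$ to obtain a $3$-vanishing commutative diagram with $l(s) = l(e)\,l(f) \neq \mathbf{0}_R$. Because the diagram is $3$-vanishing, grouping the first three factors inside $l(s')$ already yields $\mathbf{0}_R$, which then annihilates the remaining factors, so $l(s') = \mathbf{0}_R$. Hence $l(s) \neq l(s')$ in this commutative diagram, contradicting completeness of $\RR$.

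There is no deep obstacle; the argument is a systematic case split that feeds into the auxiliary constructions already built in this section. The only point requiring a small care is the verification in pattern (a) that the NZedge labeling really kills $l(s')$ when the distinguished short-side edge also appears (possibly repeatedly) inside $s'$, which reduces to $E_{1,2}^2 = \mathbf{0}_R$ and $\mathbf{0}_R \cdot x = x \cdot \mathbf{0}_R = \mathbf{0}_R$. Once this is noted, each remaining subcase is an immediate application of Lemmas \ref{NZedge}, \ref{eneqf}, or \ref{NZef}.
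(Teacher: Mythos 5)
Your proposal is correct and follows essentially the same approach as the paper: contradiction via exhibiting a commutative diagram that violates the offending relation, organized as a case split on the lengths $(|s|, |s'|)$ that feeds into Lemmas \ref{NZedge}, \ref{eneqf}, and \ref{NZef}. The only cosmetic difference is that you merge the paper's cases $(|s'|=1, |s| \geq 3)$ and $(|s'|=1, |s|=2)$ into a single appeal to the explicit $E_{1,2}$-construction behind Lemma \ref{NZedge}, where the paper routes the latter through Lemma \ref{efneqg} (itself a one-line consequence of Lemma \ref{NZedge}); and you use the zero labeling in $\mathrm{Mat}_2(\R)$ where the paper uses $(\R,\cdot)$ for the $\min=0$ case — both work.
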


\begin{proof}
    Assume the converse, then one of the following cases holds. Without loss of generality assume $|s| \geq |s'|$.

    \textit{Case 0.} $|s| = |s'| = 0$.

    This contradicts minimality of $\RR$, since this relation can be eliminated.
    
    \textit{Case 1.} $|s| \geq 1, \ |s'| = 0$.

    Consider labeling $l: E \to (\R, \cdot)$ such that $l(e) = 0$ for any $e \in E$. Then $\D = (G, l)$ is automatically commutative since $G$ does not contain any loops. At the same time, relation $s = s'$ is not satisfied since $l(s) = 0 \neq 1 = l(s')$. This is a contradiction.

    \textit{Case 2.} $|s| \geq 3, \ |s'| = 2$.

    Let $s' = (e, f)$. By Lemma \ref{NZef}, there exists a $3$-vanishing commutative diagram $\D = (G, l)$ with values in a ring $R$ such that $l(e)l(f) \neq \mathbf{0}_R = l(s)$. This is a contradiction. 

    \textit{Case 3.} $|s| \geq 3, \ |s'| = 1$.

    Let $s' = (e)$. By Lemma \ref{NZedge} there exists a $3$-vanishing commutative diagram $\D = (G, l)$ with values in a ring $R$ such that $l(e) \neq \mathbf{0}_R = l(s)$. This is a contradiction.

    \textit{Case 4.} $|s| = 2, \ |s'| = 1 $.

    Let $s = (e, f), s' = (g)$. By Lemma \ref{efneqg}, there exists a $3$-vanishing commutative diagram $\D = (G, l)$ with values in a ring $R$ such that $l(e)l(f) \neq l(g)$. This is a contradiction.

    \textit{Case 5.} $|s| = 1, \ |s'| = 1 $.

    Let $s = (e), s' = (e')$. Since $\RR$ is minimal, it follows that $e \neq e'$. By Lemma \ref{eneqf}, there exists a $3$-vanishing commutative diagram $\D = (G, l)$ with values in a ring $R$ such that $l(e) \neq l(e')$. Then we obtain $l(s) = l(e) \neq l(e') = l(s')$.
    This is a contradiction.
\end{proof}

Let us finish the proof of Proposition \ref{no_loops}.

\begin{proof}[Proof of Proposition \ref{no_loops}]

    We prove by a contradiction and assume that $\eta(G) < \Rh(G)$. Then there exists a minimal complete system of relations  $\RR$ for $G$ with  $|\RR| < \Rh(G)$.
    
    Since we have $\Rh(G)$ disjointed rhomboids in $G$, then there are $2 \cdot \Rh(G)$ pairwise disjoint consecutive pairs of edges appearing in these rhomboids. Since $|\RR| < \Rh(G)$, it follows that there exists a rhomboid $(a, b, c, d)$ such that for any relation $(s, s') \in \RR$ we have $s \neq (a, b)$ and $s' \neq (a, b)$. 
    
    We take $R = \mathrm{Mat}_2(\R)$ and consider the following labeling $l : E \to \mathrm{Mat}_3(\R)$

    $$l(x) = \begin{cases}
        E_{1, 2} & \text{if $x = a$}\\
        E_{2, 3} & \text{if $x = b$}\\
        \mathbf{0}_R & \text{otherwise.}
    \end{cases}
    $$
    Then $\D = (G, l)$ is not commutative since 
    $$l((a, b)) = E_{1, 2} E_{2, 3} = E_{1, 3} \neq \mathbf{0}_R = l((c, d)).$$
    
    Let us show that all relations in $\RR$ are satisfied. Note that $\D$ is 3-vanishing and for any pair $x, y \in E$ such that $(x, y) \neq (a, b)$ we have $l(x)l(y) = \mathbf{0}_R$. From Lemma \ref{claim_1} we know that for any $(s, s') \in \RR$ we have either $|s| \geq 3$ and $|s'| \geq 3$ or $|s| = |s'| = 2$. Since $\D$ is 3-vanishing, in the first case we have $l(s) = \mathbf{0}_R = l(s')$. In the second case, we have $s \neq (a, b)$ and $s' \neq (a, b)$, so the same equality $l(s) = \mathbf{0}_R = l(s')$ holds.
\end{proof}

Now let us finish the proof of Proposition \ref{no_loops_mult}.
\begin{proof}[Proof of Proposition \ref{no_loops_mult}]
    By Corollary \ref{minimal_nu}, there exists a minimal complete system of relations $\RR$ for $G$ such that $\nu(G) = m(\RR)$.
    Proposition \ref{no_loops} implies that $|\RR| \geq \Rh(G)$. By Lemma \ref{claim_1}, we know that for any $(s, s') \in \RR$ we have either $|s| = |s'| = 2$ or $|s| \geq 3$ and $|s'| \geq 3$. Then, by Corollary \ref{rb_is_r}, we obtain
    $$\nu(G) = m(\RR) \geq |\RR| \geq \Rh(G).$$
\end{proof}

\subsection{Reduction to loopless case} Let us extend our results from the previous subsection to all quasi-acyclic graphs. First we need several additional lemmas.

\begin{lemma} \label{lmln}
    Let $G$ be a quasi-acyclic $2$-path-bounded graph without multiple edges and triangles. Let $\RR$ be any complete system of relations for $G$. Denote by $\RR_L \subseteq \RR$ the subset consisting of all relations such that every edge in them is a loop. Denote by $\RR_N \subseteq \RR$ the subset consisting of all relations such that both their sides contain an edge which is not a loop. Then $\RR = \RR_L \sqcup \RR_N$.
\end{lemma}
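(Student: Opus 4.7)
The plan is to argue by contradiction. Suppose there is a relation $(s,s')\in\RR$ lying in neither $\RR_L$ nor $\RR_N$. The definitions force one side (say $s$, after swapping $s$ and $s'$ if necessary) to consist entirely of loops -- possibly empty -- while the other side $s'$ contains at least one non-loop edge $e$. I will exhibit a commutative diagram $\D=(G,l)$ with $l(s)\neq l(s')$, contradicting the completeness of $\RR$.

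Choose $R = \Mat_2(\R)$ and set $l(e) = E_{1,2}$, $l(\ell) = \mathbf{1}_R$ for every loop $\ell$, and $l(x) = \mathbf{0}_R$ for every other non-loop edge $x$. Because loops carry the identity, the label of any path equals the product of labels of its non-loop edges taken in order, so commutativity reduces to comparing the non-loop subsequences of paths that share their endpoints.

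I would verify commutativity by a short case analysis on the endpoints $u,v$ of two such paths. If $u = v$, any path containing a non-loop edge would yield a directed cycle through two distinct vertices, violating quasi-acyclicity; hence both paths are pure loop sequences with label $\mathbf{1}_R$. If $u \neq v$, the $2$-path-bounded hypothesis forces each non-loop subsequence to have length $1$ or $2$. The no-triangles assumption rules out a length-$1$ subsequence paired with a length-$2$ one; the no-multiples assumption forces two length-$1$ subsequences to coincide; and in the length-$2$ versus length-$2$ case both products vanish, since $e$ cannot follow itself (its endpoints differ) and every other non-loop edge is labelled $\mathbf{0}_R$, so at least one factor in each product is zero.

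Finally, $l(s) = \mathbf{1}_R$ because $s$ is either empty or a product of identities. On the other side, $l(s') \in \{\mathbf{0}_R, E_{1,2}\}$: it equals $\mathbf{0}_R$ if $s'$ contains a non-loop edge different from $e$, or contains $e$ at least twice (using $E_{1,2}^2 = \mathbf{0}_R$), and it equals $E_{1,2}$ if $e$ appears exactly once in $s'$ among loops. Either way $l(s) \neq l(s')$, producing the desired contradiction. The main obstacle I anticipate is establishing commutativity cleanly in the presence of loops interleaved arbitrarily along paths; all four structural hypotheses on $G$ are used jointly, and it is their cooperation -- rather than any single one -- that lets a single labeling be simultaneously commutative and discriminating.
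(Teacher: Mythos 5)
Your proof is correct but heavier than the paper's and follows a genuinely different route. The paper argues directly rather than by contradiction: it defines a single labeling $l\colon E \to (\R,\cdot)$ with $l(x)=1$ if $x$ is a loop and $l(x)=0$ otherwise, observes that commutativity of $(G,l)$ follows from quasi-acyclicity alone (a path with equal endpoints must consist of loops, hence $l(p)=1$; a path with distinct endpoints must contain a non-loop edge, hence $l(p)=0$), and then for any $(s,s')\in\RR$ invokes completeness to get $l(s)=l(s')$: if both values are $0$, each side contains a non-loop edge and the relation lies in $\RR_N$; if both are $1$, each side is all loops and the relation lies in $\RR_L$. Your version, by contrast, assumes an offending relation, builds a $\Mat_2(\R)$-labeling that singles out a non-loop edge $e$ of $s'$, and verifies commutativity through a case analysis that invokes all four structural hypotheses on $G$. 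Both arguments are valid, but the paper's is shorter and reveals that only quasi-acyclicity is actually needed here; the observation you make at the end — that all four hypotheses must "cooperate" — is an artifact of your more discriminating choice of labeling rather than a feature of the lemma itself.
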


\begin{proof} 
    Consider following labeling $l: E \to (\R, \cdot)$

    $$l(x) = \begin{cases}
        1 & \text{if $x$ is a loop}\\
        0 & \text{otherwise.}
    \end{cases}$$
    Let us check that $\D = (G, l)$ is commutative.
    Indeed, since $G$ is quasi-acyclic, then any path $p$ in $G$ with $o(p) = t(p)$ does not contain non-loop edges and so $l(p) = 1$. In the case $o(p) \neq t(p)$, the path $p$ contains a non-loop edge and hence $l(p) = 0$.
    
    Consider any relation $(s, s') \in \RR$. As $\D$ is commutative, we have $l(s) = l(s')$. If $l(s) = 0 = l(s')$, then both $s$ and $s'$ contain a non-loop edge, so $(s, s') \in \RR_N$. 
    Otherwise, $l(s) = 1 = l(s')$, so and both consist only of loops, i.e. $(s, s') \in \RR_L$.

    Therefore, $\RR$ can be represented as $\RR = \RR_L \sqcup \RR_N$.
\end{proof}

\begin{lemma} \label{rl_lg}
    Let $G$ be a quasi-acyclic $2$-path-bounded graph without multiple edges and triangles. Let $\RR$ be any complete system of relations for $G$. Denote by $\RR_L \subseteq \RR$ the subset consisting of all relations such that every edge in them is a loop. Then $|\RR_L| \geq \L(G)$.
\end{lemma}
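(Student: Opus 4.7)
The strategy is proof by contradiction: assume $|\RR_L| < \L(G)$ and construct a labeling $l\colon E \to M$ that satisfies every relation in $\RR$ yet makes $\D = (G, l)$ non-commutative, contradicting the completeness of $\RR$. Enumerate the loops of $G$ as $\ell_1, \dots, \ell_L$ with $L = \L(G)$, and for any edge sequence $s$ let $v_i(s)$ denote the number of occurrences of $\ell_i$ in $s$. I will design a matrix labeling so that each relation $(s, s') \in \RR_L$ reduces to the single scalar equation $\sum_{i=1}^{L} a_i (v_i(s) - v_i(s')) = 0$ in unknowns $a_1, \dots, a_L \in \Z$.

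Since $|\RR_L| < L$, the resulting homogeneous system over $\Q$ has more unknowns than equations, so it admits a nonzero rational solution, and clearing denominators yields a nonzero integer solution $(a_1, \dots, a_L) \in \Z^L \setminus \{0\}$. I would realize this solution inside the monoid $M = \Mat_2(\Z)$ by setting
\[
l(\ell_i) = \begin{pmatrix} 1 & a_i \\ 0 & 1 \end{pmatrix}, \qquad l(e) = \mathbf{0} \;\text{ for every non-loop edge } e,
\]
and then check that $l$ satisfies every relation in $\RR$. By Lemma~\ref{lmln}, $\RR = \RR_L \sqcup \RR_N$, so there are two cases. For $(s, s') \in \RR_N$ both sides contain a non-loop edge, hence $l(s) = \mathbf{0} = l(s')$ because the zero matrix is absorbing. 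For $(s, s') \in \RR_L$ the product of unitriangular matrices with top-right entries $a_{i_1}, \dots, a_{i_k}$ is itself unitriangular with top-right entry $a_{i_1} + \cdots + a_{i_k}$, so the equality $l(s) = l(s')$ reduces exactly to the linear equation that $(a_i)$ satisfies by construction.

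The final step is to exhibit the failure of commutativity. I would pick any index $i$ with $a_i \neq 0$ and compare the empty path with the length-one path $(\ell_i)$ at the basepoint $o(\ell_i) = t(\ell_i)$: they share endpoints but have labels $I_2$ and $\begin{pmatrix} 1 & a_i \\ 0 & 1 \end{pmatrix} \neq I_2$ respectively, so $\D$ is not commutative even though every relation in $\RR$ is satisfied, contradicting completeness of $\RR$. The main technical concern is selecting a target monoid that simultaneously (i) makes loop products behave additively so that loop-only relations become linear constraints amenable to a dimension count and (ii) makes non-loop edges absorbing so that relations in $\RR_N$ are automatically satisfied regardless of the chosen $a_i$; the choice $M = \Mat_2(\Z)$ with unitriangular labels for loops and zero matrices for non-loops accomplishes both cleanly.
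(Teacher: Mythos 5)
Your proof is correct and follows essentially the same approach as the paper: assume $|\RR_L| < \L(G)$, find a nonzero kernel vector of the coefficient matrix recording loop-occurrence differences in the $\RR_L$ relations, realize it as upper-unitriangular labels on loops and zero matrices on non-loops, and then contradict completeness. The only cosmetic difference is your use of $\Mat_2(\Z)$ after clearing denominators, whereas the paper works directly in $\Mat_2(\R)$; both are equivalent.
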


\begin{proof}
    Let us prove the Lemma by contradiction. Assume that $|\RR_L| < \L(G)$. Take $M = \Mat_2(\R)$ with respect to the matrix multiplication. It suffices to construct a labeling $l: E \to \Mat_2(\R)$ such that $\D = (G, l)$ is not commutative and all the relations from $\RR$ are satisfied.

    Let $e_1, \dots e_{|\L(G)|} \in E$ be the loops of $G$. Let $\RR_L = \{R_1, \dots, R_{|\RR_L|}\}$, where $R_i = (s_i, s'_i)$.
    Consider the matrix $A = (a_{i, j}) \in \Mat_{|\RR_L| \times \L(G)}$ defined as follows. We set 
    $$a_{i, j} = (\text{number of occurrences of $e_j$ in $s_i$}) - (\text{number of occurrences of $e_j$ in $s'_i$}).$$
    Since $|\RR_L| < \L(G)$, there exists $0 \neq v \in \ker A \subseteq \R^{|\L(G)|}$. Let $v = \{v_1, \dots, v_{|\L(G)|}\}^T$.
    Consider the map $l: E \to \Mat_2(\R)$ given by
    $$l(x) = \begin{cases}
    \begin{pmatrix} 
            1 & v_j \\
            0 & 1
    \end{pmatrix} & \text{if $x = e_j$, where $1 \leq j \leq |\L(G)|$}, \\
    \;\;\;\;\;\; \mathbf{0}_M & \text{otherwise}.
    \end{cases}.$$
    
    Since $v \neq 0$ then there exists $j$ such that $v_j \neq 0$. Then $l(e_j) \neq \mathbf{1}_M$ and $\D$ is not commutative because $e_j$ is a loop.
    
    It remains to show that all the relations from $\RR$ are satisfied. By Lemma \ref{lmln}, we have $\RR = \RR_L \sqcup \RR_N$. Every relation from $\RR_N$ is automatically satisfied since every its side contains a non-loop edge and hence its value equals to $\mathbf{0}_M$.
    
    Finally, let us take any relation $(s_i, s'_i) \in \RR_L$. Since for any $x, y \in \R$ we have
    $$  \begin{pmatrix} 
            1 & x \\
            0 & 1
    \end{pmatrix} \cdot  \begin{pmatrix} 
            1 & y \\
            0 & 1
    \end{pmatrix} =  \begin{pmatrix} 
            1 & x + y \\
            0 & 1
    \end{pmatrix},$$
    then we have 
    $$l(s_i) = \begin{pmatrix} 
            1 & v_{s_i} \\
            0 & 1
    \end{pmatrix},$$
    where 
    $$v_{s_i} = \sum_{j=1}^{|\L(G)|} v_j \cdot (\text{number of occurrences of $e_j$ in $s_i$}).$$
    Similarly, $$l(s'_i) = \begin{pmatrix} 
            1 & v_{s'_i} \\
            0 & 1
    \end{pmatrix},$$
    where 
    $$v_{s'_i} = \sum_{j=1}^{|\L(G)|} v_j \cdot (\text{number of occurrences of $e_j$ in $s'_i$}).$$
    We obtain
    $$v_{s_i} - v_{s'_i} = \sum_{j=1}^{|\L(G)|} v_j \cdot (\text{number of occurrences of $e_j$ in $s_i$}) - $$
    $$ - \sum_{j=1}^{|\L(G)|} v_j \cdot (\text{number of occurrences of $e_j$ in $s'_i$}) = $$
    $$ = \sum_{j=1}^{|\L(G)|} v_j \cdot \Bigl( (\text{number of occurrences of $e_j$ in $s_i$}) - (\text{number of occurrences of $e_j$ in $s'_i$}) \Bigr) =$$
    $$ = \sum_{j=1}^{|\L(G)|} v_j \cdot a_{i, j} =  (Av)_i = 0, $$
    where the last equality follows from the fact that $v \in \ker A$. Therefore, $v_{s_i} = v_{s'_i}$ and hence $l(s) = l(s')$. This concludes the proof.
\end{proof}

\begin{lemma} \label{const_reduction}
    Let $G$ be a quasi-acyclic $2$-path-bounded graph without multiple edges and triangles. Let $\RR$ be any complete system of relations for $G$. Denote by $G' = (V, E')$ the graph obtained from $G$ by removing all loop-edges and let $\RR'_N$ be a family of relations obtained from $\RR_N$ by removing all loop-edges. Then $\RR'_N$ is a complete system of relations for $G'$.
\end{lemma}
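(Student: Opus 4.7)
The plan is to show the biconditional by extending any labeling $l'\colon E' \to M$ of $G'$ to a labeling $l\colon E \to M$ of $G$ that assigns $\mathbf{1}_M$ to every loop-edge and agrees with $l'$ elsewhere, and then transferring commutativity statements back and forth across this extension. The central observation will be that for any edge sequence $s$ in $G$ one has $l(s) = l'(s^N)$, where $s^N$ denotes the subsequence of non-loop edges of $s$, because each loop contributes $\mathbf{1}_M$ and can be deleted from the product without changing its value.

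Next I would verify a small path-level fact: for any path $p$ in $G$, the subsequence $p^N$ of non-loop edges forms a path in $G'$ with $o(p^N) = o(p)$ and $t(p^N) = t(p)$. The justification uses quasi-acyclicity: any block of consecutive loops inside $p$ sits at a single vertex, so deleting them leaves a sequence of composable non-loop edges in $G'$; if $p^N$ is empty then all edges of $p$ are loops based at one vertex, so $o(p) = t(p)$ and $p^N$ is well-defined as the empty path at this vertex. Consequently, if $(G', l')$ is commutative and $p_1, p_2$ are paths in $G$ with the same endpoints, then $p_1^N, p_2^N$ are paths in $G'$ with the same endpoints, giving $l(p_1) = l'(p_1^N) = l'(p_2^N) = l(p_2)$; thus $(G, l)$ is commutative. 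Completeness of $\RR$ for $G$ then forces every relation in $\RR_N \subseteq \RR$ to hold under $l$, and stripping loops converts each such equality $l(s) = l(s')$ into $l'(s^N) = l'(s'^N)$, which is exactly the corresponding relation of $\RR'_N$.

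For the converse, assume $l'$ satisfies all relations in $\RR'_N$. Every relation in $\RR_L$ involves only loops, so under $l$ both sides collapse to $\mathbf{1}_M$; every relation $(s, s') \in \RR_N$, viewed under $l$, reduces via $l(s) = l'(s^N)$ to the corresponding relation in $\RR'_N$, which is assumed to hold. By Lemma \ref{lmln}, these two subsets exhaust $\RR$, so $l$ satisfies every relation of the complete system $\RR$, and hence $(G, l)$ is commutative. Finally, any path in $G'$ is a loop-free path in $G$ whose $l$-value coincides with its $l'$-value, so commutativity of $(G, l)$ immediately transfers to commutativity of $(G', l')$.

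The only genuinely delicate step is the bookkeeping for $p^N$: one must be sure the non-loop subsequence of any path in $G$ really is a legitimate path of $G'$ (including the degenerate case of an empty $p^N$), for which quasi-acyclicity is exactly the right hypothesis. Once this is in hand, the rest of the argument is a direct application of $\mathbf{1}_M$ being the identity and of completeness of $\RR$, and Lemma \ref{lmln} handles the partition of $\RR$ cleanly.
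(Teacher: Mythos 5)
Your proof is correct and takes essentially the same route as the paper: extend $l'$ to $l$ on $G$ by assigning $\mathbf{1}_M$ to loops, transfer commutativity and relation-satisfaction across this extension, and use Lemma \ref{lmln} to account for $\RR_L$. You are somewhat more explicit than the paper in verifying that the non-loop subsequence $p^N$ of a path is again a path with the same endpoints, which is welcome; note, however, that this fact does not actually rely on quasi-acyclicity — two consecutive loops in a path automatically lie at the same vertex simply because each loop has equal origin and tail, so the observation holds in any graph, and quasi-acyclicity enters only indirectly via Lemma \ref{lmln}.
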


\begin{proof}
    First, let us show that for any commutative diagram $\D' = (G', l')$ all the relations from $\RR'_N$ are satisfied. Indeed, we can extend $\D'$ to a commutative diagram $\D = (G, l)$ by setting $l(e) = \mathbf{1}_M$ for any $e \in E \setminus E'$. Note that the relations $\RR_N$ are satisfied for $\D$. Moreover, by construction, the values of the sides of the relations $\RR_N$ on $\D$ coincide with the correspondent values of $\RR'_N$ on $D'$. Hence all the relations from $\RR'_N$ are satisfied for $\D'$.

    Now let us prove in the same way that $\RR'_N$ is complete. Assume the converse and let $\D' = (G', l')$ be a non-commutative diagram such that the relations $\RR'_N$ are satisfied. Again we can extend $\D'$ to a non-commutative diagram $\D = (G, l)$ by setting $l(e) = \mathbf{1}_M$ for any $e \in E \setminus E'$. Similarly, the values of the sides of the relations $\RR_N$ on $\D$ coincide with the correspondent values of $\RR'_N$ on $\D'$. Hence all the relations from $\RR_N$ are satisfied for $\D$. Since $l(e) = \mathbf{1}_M$ for any $e \in E \setminus E'$, all the relations from $\RR_L$ are also satisfied for $\D$. Therefore, all the relations $\RR$ are satisfied for a non-commutative diagram $\D$. This contradiction proves that $\RR'_N$ is complete for $G'$.
\end{proof}

\begin{corollary} \label{nu_reduction}
    Let $G$ be a quasi-acyclic $2$-path-bounded graph without multiple edges and triangles. Denote by $G' = (V, E')$ the graph obtained from $G$ by removing all loop-edges. Then $\nu(G') \leq \nu(G)$.
\end{corollary}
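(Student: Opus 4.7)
The plan is to start with a minimal complete system of relations $\RR$ for $G$ realizing $\nu(G) = m(\RR)$ (which exists by Corollary \ref{minimal_nu}), extract from it a complete system of relations for $G'$, and then translate the associated multiplication scheme into one for $G'$ without increasing the operation count.

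First I will invoke Lemma \ref{lmln} to decompose $\RR = \RR_L \sqcup \RR_N$, and let $\RR'_N$ be obtained from $\RR_N$ by stripping every loop-edge from each sequence. Lemma \ref{const_reduction} then ensures that $\RR'_N$ is a complete system of relations for $G'$, so by the definition of multiplication rank $\nu(G') \leq m_{G'}(\RR'_N)$, where I use a subscript to record which graph's set $\mathcal{E}$ is used as the starting point.

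The heart of the argument is to show $m_{G'}(\RR'_N) \leq m_G(\RR)$. I will fix an optimal construction sequence $\mathcal{E}_G = S_0 \subseteq S_1 \subseteq \cdots \subseteq S_k \supseteq S_{\RR}$ with $k = m_G(\RR)$, and define $T_i$ to be the image of $S_i$ under the loop-removal map. The routine verification then goes as follows: $T_0 = \mathcal{E}_{G'}$, since every loop singleton of $\mathcal{E}_G$ collapses to the empty sequence while non-loop singletons are preserved; each multiplication $a_i \circ b_i$ at the original level descends to a multiplication $a'_i \circ b'_i$ of loop-removed factors, because loop-removal commutes with concatenation, so each original step contributes at most one new multiplication in the translated scheme (and possibly none if $s'_i$ already lies in $T_{i-1}$); and $T_k$ contains the loop-removed image of $S_{\RR}$, hence in particular contains $S_{\RR'_N}$. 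This gives $m_{G'}(\RR'_N) \leq k$, and chaining the inequalities yields $\nu(G') \leq m_{G'}(\RR'_N) \leq m_G(\RR) = \nu(G)$.

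The main obstacle, though ultimately minor, is that Lemma \ref{nu_is_monotonous} operates within a single graph and does not directly compare $m_{G'}$ against $m_G$: passing from $\mathcal{E}_G$ to $\mathcal{E}_{G'}$ strictly shrinks the set of initial building blocks, so one cannot simply invoke an existing monotonicity statement. The observation that closes this gap is that, since every sequence in $S_{\RR'_N}$ is loop-free by construction, the loop singletons contributed by $\mathcal{E}_G$ are never needed to build the target sequences, and hence the translated scheme legitimately lives over $G'$.
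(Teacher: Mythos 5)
Your proof is correct and follows the same structural skeleton as the paper's: start from a minimal complete system $\RR$ realizing $\nu(G)$ via Corollary~\ref{minimal_nu}, split via Lemma~\ref{lmln}, and use Lemma~\ref{const_reduction} to see that $\RR'_N$ is complete for $G'$. The difference is in how the chain of inequalities on $m$ is justified, and here you are actually \emph{more} careful than the paper. The paper writes $m(\RR'_N)\le m(\RR_N)\le m(\RR)$ by citing Lemma~\ref{m_momotonous} and Lemma~\ref{nu_is_monotonous}, both of which operate entirely inside the ambient graph $G$ (so $m$ there is what you call $m_G$), and then concludes $\nu(G')\le m(\RR'_N)$, where $m$ must now be taken with respect to $G'$. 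Since $\mathcal{E}_{G'}\subsetneq\mathcal{E}_G$, the naive relation between $m_{G'}$ and $m_G$ goes the wrong way (a smaller starting set can only make construction harder), so there is a real gap to close. You close it correctly: you observe that the loop-removal map sends $\mathcal{E}_G$ onto $\mathcal{E}_{G'}$, commutes with concatenation, and therefore transports an optimal construction sequence over $G$ (starting from $\mathcal{E}_G$ and reaching a superset of $S_\RR$) to one over $G'$ (starting from $\mathcal{E}_{G'}$ and reaching a superset of $S_{\RR'_N}$) with no more steps. This is essentially an in-lined, graph-aware version of the argument in Lemma~\ref{nu_is_monotonous}; note that by applying it directly to $\RR$ rather than to $\RR_N$ you also sidestep the appeal to Lemma~\ref{m_momotonous}. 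One small typo: in ``possibly none if $s'_i$ already lies in $T_{i-1}$'' you presumably mean the loop-removed image of the newly added concatenation, not $s'_i$.
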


\begin{proof}
    By Corollary \ref{minimal_nu}, there exists a complete system relations $\RR$ for $G$ such that $\nu(G) = m(\RR)$. Let $\RR'_N$ be a family of relations obtained from $\RR_N$ by removing all loop-edges. By Lemma \ref{const_reduction} we know that $\RR'_N$ is a complete system of relations for $G'$.
    Since by Lemma \ref{m_momotonous} and Corollary \ref{nu_is_monotonous} we have 
    $$m(\RR'_N) \leq m(\RR_N) \leq m(\RR),$$ 
    then we obtain
    $$\nu(G') \leq m(\RR'_N) \leq m(\RR) = \nu(G),$$
    which concludes the proof.
\end{proof}

\begin{lemma} \label{rn_rg}
    Let $G$ be a quasi-acyclic $2$-path-bounded graph without multiple edges and triangles. Let $\RR$ be any complete system of relations for $G$. Denote by $\RR_N \subseteq \RR$ the subset consisting of all relations such that both their sides contain an edge which is not a loop. Then $|\RR_N| \geq \Rh(G)$.
\end{lemma}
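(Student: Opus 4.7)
The plan is to reduce the claim to the loopless case already handled by Proposition \ref{no_loops}. Let $G' = (V, E')$ be the graph obtained from $G$ by deleting every loop. Since erasing edges cannot create cycles, multiple edges, or triangles, and since the definition of $2$-path-boundedness already ignores loop edges, the graph $G'$ inherits all the hypotheses of Proposition \ref{no_loops}: it is quasi-acyclic, $2$-path-bounded, loopless, and free of multiple edges and triangles. Thus Proposition \ref{no_loops} applies to $G'$ and yields $\eta(G') \geq \Rh(G')$.

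Next, form the family $\RR'_N$ from $\RR_N$ by stripping every loop-edge out of both sides of each relation. Lemma \ref{const_reduction} guarantees that $\RR'_N$ is a complete system of relations for $G'$, so by the very definition of $\eta$ we have $|\RR'_N| \geq \eta(G')$. The loop-stripping operation defines a surjection $\RR_N \twoheadrightarrow \RR'_N$, hence $|\RR_N| \geq |\RR'_N|$.

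Finally, I would verify that $\Rh(G) = \Rh(G')$. By definition a rhomboid $(a,b,c,d)$ is anchored at four pairwise distinct vertices $o(a), t(a), o(d), t(d)$, so none of its four edges can be a loop; consequently any family of pairwise disjoint rhomboids in $G$ lives inside $G'$, and the reverse inclusion is obvious. Chaining the inequalities one obtains
$$|\RR_N| \;\geq\; |\RR'_N| \;\geq\; \eta(G') \;\geq\; \Rh(G') \;=\; \Rh(G),$$
which is precisely the required bound.

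I do not expect any real obstacle: the argument is an organizational reduction, and every ingredient --- the loop-stripping operation, its preservation of completeness (Lemma \ref{const_reduction}), the loopless lower bound (Proposition \ref{no_loops}), and the fact that rhomboids never involve loops --- is already in hand. The only point worth handling carefully is the observation that the hypotheses of Proposition \ref{no_loops} survive passage from $G$ to $G'$, which is immediate once one notes that $2$-path-boundedness is defined via loop-avoiding paths.
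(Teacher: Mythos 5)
Your proposal is correct and follows essentially the same route as the paper: delete loops to obtain $G'$, invoke Lemma \ref{const_reduction} to show $\RR'_N$ is complete for $G'$, apply Proposition \ref{no_loops}, and observe that rhomboids never involve loops. The only (harmless) variation is that you carefully write $|\RR_N| \geq |\RR'_N|$ via surjectivity, whereas the paper asserts $|\RR_N| = |\RR'_N|$ without addressing possible collisions under loop-stripping; your inequality is the safer statement and suffices.
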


\begin{proof}
    Denote by $G' = (V, E')$ the graph obtained from $G$ by removing all loop-edges and let $\RR'_N$ be a family of relations obtained from $\RR_N$ by removing all loop-edges. By Lemma \ref{const_reduction} we know that $\RR'_N$ is a complete system of relations for $G'$.

    Since $\RR'_N$ is a complete system of relations for $G'$, Proposition \ref{no_loops} implies that 
    $$|\RR_N| = |\RR'_N| \geq \Rh(G') = \Rh(G).$$ 
    The last equality follows from the fact that loops does not affect on the maximal number of disjoint rhomboids in a graph.
\end{proof}

Now we are able to finish the proof of the main results of this section.

\begin{proof}[Proof of Theorem \ref{eq_amount_theorem}]
Let $\RR$ be any complete system of relations for $G$.
Denote by $\RR_L \subseteq \RR$ the subset consisting of all relations such that every edge in them is a loop. Denote by $\RR_N \subseteq \RR$ the subset consisting of all relations such that both their sides contain an edge which is not a loop. 

Lemmas \ref{lmln}, \ref{rl_lg}, \ref{rn_rg} immediately imply 
$$|\RR| = |\RR_N| + |\RR_L| \geq \Rh(G) + \L(G),$$
which concludes the proof of Theorem \ref{eq_amount_theorem}.
\end{proof}

\begin{proof}[Proof of Theorem \ref{mult_amount_theorem}]
     Denote by $G'$ the graph obtained from $G$ by removing all loop-edges.
     Combining Proposition \ref{no_loops_mult} with Corollary \ref{nu_reduction} we immediately obtain
    $$ \nu(G) \geq \nu(G') \geq \Rh(G') = \Rh(G).$$
\end{proof}

\section{Commutativity verification algorithm} \label{S6}

In this section we propose algorithm with no more than
$$\min(|V|^2, |E|) \cdot \min(|V|, |E|) + |E|$$
checks for equality and no more than 
$$\min(|V|^2, |E|) \cdot \min(|V|, |E|)$$
multiplications of elements of $M$. The algorithm begins with two preliminary steps: removing loops and multiple edges.

\subsection{Removing loops.} For each edge $e$ we check if $o(e) = t(e)$. If condition holds, the edge is identified as a loop, and we verify whether it equals $1_M$. Then we remove the edge $e$ from $G$.

    \begin{algorithm}[H]
    \caption{Removing Loops}
    \begin{algorithmic}[1] 
    \REQUIRE Graph $G$, labeling $l$.
    \STATE \textbf{procedure} RemoveLoops($G$)
    \FOR{each vertex $v$ in $G$}
        \FOR{each edge $e$ outgoing from $v$}
            \IF{$t(e) = v$}
                \IF{$l(e) \neq \mathbf{1}_M$}
                    \STATE \textbf{exit} with value \texttt{False}
                \ENDIF
                
                \STATE \textbf{remove} $e$ from $G$
            \ENDIF
        \ENDFOR
    \ENDFOR
    \end{algorithmic}
    \end{algorithm}

\subsection{Removing multiple edges.} For each vertex $v$, we examine all of its outgoing edges. If two edges are found to have the same tail, we check if they are identical, and then remove the second edge of the pair from $G$.

    \begin{algorithm}[H]
    \caption{Removing Multiple Edges}
    \begin{algorithmic}[1] 
    \REQUIRE Graph $G$, labeling $l$.
    \STATE \textbf{procedure} RemoveMultipleEdges($G$)
    \FOR{each vertex $v$ in $G$}
        \STATE \textbf{create array} $T$ of length $|V|$ with empty values
        \FOR{each edge $e$ outgoing from $v$}
            \IF{$T[t(e)]$ is empty}
                \STATE \textbf{set} $T[t(e)] = l(e)$
            \ELSE
                \IF{$l(e) \neq T[t(e)]$}
                    \STATE \textbf{exit} with value \texttt{False}
                \ENDIF

                \STATE \textbf{remove} $e$ from $G$
            \ENDIF
        \ENDFOR
    \ENDFOR
    \end{algorithmic}
    \end{algorithm}

Note that these two steps combined give us no more then $|E|$ checks for equality, as multiple edge can not be a loop.
After these steps, we obtain a reduced graph $G = (V, E')$ satisfying $|E'| < |V|^2$, as there are no multiple edges or loops remaining. 
With the graph now simplified, we can proceed to the core part of the algorithm.

\subsection{DFS procedure.} Let us briefly recall the standard DFS algorithm. It will be conviniet to use following modification, where we define the functions \texttt{ProceedNewVertex} and \texttt{ProceedOldVertex} later.

    \begin{algorithm}[H]
    \caption{Depth-First Search}
    \begin{algorithmic}[1] 
    \REQUIRE Graph $G$, starting vertex $v$, labeling $l$.
    \STATE \textbf{procedure} DFS($G$, $v$)
    \STATE \textbf{mark} $v$ as visited
    \FOR{each edge $e$ outgoing from $v$}
        \STATE \textbf{set} $u = t(e)$ 
        \IF{$u$ is not visited}
            \STATE \texttt{ProceedNewVertex}$(v, e, u)$
            \STATE DFS($G$, $u$)
        \ELSE 
            \STATE \texttt{ProceedOldVertex}$(v, e, u)$
        \ENDIF
    \ENDFOR
    \end{algorithmic}
    \end{algorithm}

In order to define the functions \texttt{ProceedNewVertex} and \texttt{ProceedOldVertex}, for every vertex $v$ we keep an element of $M$ called $m(v)$. Before the algorithm starts, we set $m(v) = \mathbf{1}_M$ for all $v \in V$. Now we can define these functions.

    \begin{algorithm}[H]
    \caption{ProceedNewVertex}
    \begin{algorithmic}[1] 
    \REQUIRE An edge $e$, vertices $v = o(e)$, $u = t(e)$, labeling $l$.
    \STATE \textbf{procedure} ProceedNewVertex$(v, e, u)$
        \STATE \textbf{set} $m(u) = m(v) l(e)$ 
    \end{algorithmic}
    \end{algorithm}

    \begin{algorithm}[H]
    \caption{ProceedOldVertex}
    \begin{algorithmic}[1] 
    \REQUIRE An edge $e$, vertices $v = o(e)$, $u = t(e)$, labeling $l$.
    \STATE \textbf{procedure} ProceedOldVertex$(v, e, u)$
        \IF{$m(u) \neq m(v) l(e)$}
            \STATE \textbf{exit} with value \texttt{False}
        \ENDIF
    \end{algorithmic}
    \end{algorithm}

Finally, we define the core algorithm we run for a general graph $G$.

    \begin{algorithm}[H]
    \caption{Core Algorithm}
    \begin{algorithmic}[1] 
    \REQUIRE Graph $G$, labeling $l$.
    \STATE \textbf{procedure} CoreAlgorithm($G$)
        \STATE RemoveLoops($G$)
        \STATE RemoveMultipleEdges($G$)
        \FOR{each vertex $v$ in $V$}
            \STATE \textbf{DFS}($G$, $v$)
        \ENDFOR
        \STATE \textbf{exit} with value \texttt{True}
    \end{algorithmic}
    \end{algorithm}

\subsection{Proof} Let us prove that our algorithm indeed verifies diagram commutativity.
Let $\D = (G, l)$ be a diagram. By construction, the core algorithm returns \texttt{True} if $D$ is commutative.
It suffices to prove that for any vertex $v$ the algorithm \texttt{DFS}($G, v$) verifies commutativity of all pairs of paths starting in $v$. 

For every vertex $u \in V$ we denote by $p_u$ the unique simple path from $v$ to $u$ on the DFS tree of the algorithm \texttt{DFS}($G, v$). By definition of the function \texttt{ProceedNewVertex} we have $m(u) = l(p_u)$ for any $u \in V$. So it suffices to check that for any path $p$ with $o(p) = v$ and $t(p) = u$ we have $l(p) = l(p_u)$.

We prove this by induction on the length $|p|$ of path $p$. Base of induction for $|p| = 0$ is trivial. Let us prove the induction step. Let $e$ be the last edge of $p$, and denote by $p'$ the path obtained from $p$ by removing $e$. We have $l(p) = l(p')l(e)$.
Let $u' = t(p')$.
Since $|p'| = |p| - 1$, then by the induction hypothesis we have $l(p') = l(p_{u'}) = m(u')$.

Since $u'$ is reachable from $v$, then the function \texttt{ProceedOldVertex}$(u', e, u)$ was called and did not returned \texttt{False}. Hence we have $m(u) = m(u')l(e)$. Therefore,
$$l(p) = l(p')l(e) = m(u')l(e) = m(u).$$
This concludes the proof.

\section{Time complexity} \label{S7}

In this section, we prove Theorems \ref{upper_bound} and \ref{upper_bound_mult}. In particular, we calculate the number of multiplications and equality checks in our algorithm. We also compute the asymptotics of the whole algorithm itself.

Denote by $E'$ the set of edges of $G$ after removing multiple edges and loops. We start with the following lemma.

\begin{lemma} \label{amount_calls}
     The total amount of calls of functions \texttt{ProceedNewVertex} and \\ \texttt{ProceedOldVertex} is no more than 
     $$|E'| \cdot \min(|V|, |E'| + 1).$$
\end{lemma}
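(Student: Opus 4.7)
The plan is to count the total number of calls by double‐counting over edges and starting vertices. Inside a single run of $\texttt{DFS}(G, v)$ on the reduced graph, for every vertex $v'$ visited, the loop iterates once over each outgoing edge of $v'$ and performs exactly one call (to either \texttt{ProceedNewVertex} or \texttt{ProceedOldVertex}) per such edge. Since the vertices visited by $\texttt{DFS}(G, v)$ are exactly those reachable from $v$, the total number of calls made during $\texttt{DFS}(G, v)$ equals the number of edges $e \in E'$ with $o(e)$ reachable from $v$.

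Summing over all starting vertices and swapping the order of summation gives
\[
   N \;=\; \sum_{v \in V}\bigl|\{\,e \in E' : o(e) \text{ is reachable from } v\,\}\bigr|
       \;=\; \sum_{e \in E'} R\bigl(o(e)\bigr),
\]
where $R(u)$ denotes the number of vertices $v \in V$ from which $u$ is reachable. I would then bound $R(u)$ in two ways. The trivial bound is $R(u) \leq |V|$. For the second, observe that every $v \in V$ with $v \neq u$ which reaches $u$ admits a non-empty path from $v$ to $u$, whose first edge has origin $v$; since distinct origins yield distinct edges of $E'$, we get $R(u) - 1 \leq |E'|$, i.e.\ $R(u) \leq |E'| + 1$.

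Combining these bounds gives $R(o(e)) \leq \min(|V|,|E'|+1)$ for every $e \in E'$, whence
\[
   N \;\leq\; |E'|\cdot \min\bigl(|V|,\,|E'|+1\bigr),
\]
as claimed. I do not expect a real obstacle here: the only subtlety is justifying the $|E'|+1$ bound on $R(u)$, which rests on the observation that the first edges of paths arriving at $u$ from distinct vertices are automatically distinct.
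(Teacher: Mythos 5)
Your proof is correct and follows essentially the same approach as the paper, which also reduces the lemma to counting pairs $(v,e)$ and bounding the per-edge contribution by $\min(|V|, |E'|+1)$. The one difference is how the bound $|E'|+1$ is justified: you bound the number of vertices reaching $o(e)$ directly via the injective map $v \mapsto$ (first edge of a path from $v$ to $o(e)$), whereas the paper bounds by the size of the undirected connected component $C_e$ of $e$, using that a connected graph on $k$ vertices has at least $k-1$ edges; both routes are valid and of comparable difficulty.
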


\begin{proof}
    The total number of calls is equal to the number of pairs $(v, e) \in V \times E'$, such that \texttt{DFS}$(G, v)$ reaches $e$. This number is less or equal to 
    $$\sum_{e \in E'}|C_e| \leq \sum_{e \in E'} \min(|V|, |E'| + 1) = |E'| \cdot \min(|V|, |E'| + 1),$$
    where $C_e \subseteq V$ is the connected component of $e$. 
\end{proof}

\begin{theorem} \label{algo_mult}
    The core algorithm performs no more than 
    $$|E'| \cdot \min(|V|, |E'| + 1) \leq \min(|V|^2, |E|) \cdot \min(|V|, |E| + 1)$$
    multiplication operations in $M$.
\end{theorem}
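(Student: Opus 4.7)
The plan is to notice that multiplications occur in the \textit{CoreAlgorithm} only inside the two helper routines \texttt{ProceedNewVertex} and \texttt{ProceedOldVertex}. Each call to \texttt{ProceedNewVertex}$(v,e,u)$ performs exactly one multiplication, namely the product $m(v)\,l(e)$ that is assigned to $m(u)$; each call to \texttt{ProceedOldVertex}$(v,e,u)$ also performs exactly one multiplication, namely the product $m(v)\,l(e)$ that is compared against $m(u)$. The preprocessing steps \textit{RemoveLoops} and \textit{RemoveMultipleEdges} only read and compare labels, so they contribute no multiplications. Hence the total number of multiplications performed by the core algorithm equals the total number of calls to these two functions, which by Lemma \ref{amount_calls} is at most
\[
|E'|\cdot\min\bigl(|V|,\,|E'|+1\bigr).
\]

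Next I would establish the second inequality of the theorem by bounding $|E'|$. Since \textit{RemoveLoops} and \textit{RemoveMultipleEdges} only delete edges, we have $|E'|\le|E|$. Moreover, the graph obtained after preprocessing has neither loops nor multiple edges, so there is at most one edge for each ordered pair of distinct vertices, yielding $|E'|\le |V|(|V|-1)\le|V|^{2}$. Combining the two estimates gives $|E'|\le \min(|V|^{2},|E|)$.

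To conclude, I would check that the function $f(x)=x\cdot\min(|V|,\,x+1)$ is nondecreasing in $x\ge 0$: for $x+1\le|V|$ it equals $x(x+1)$, and for $x+1\ge|V|$ it equals $|V|x$, both nondecreasing and agreeing at $x=|V|-1$. Applying monotonicity to $|E'|\le\min(|V|^{2},|E|)$ then yields
\[
|E'|\cdot\min\bigl(|V|,|E'|+1\bigr)\;\le\;\min(|V|^{2},|E|)\cdot\min\bigl(|V|,|E|+1\bigr),
\]
which is the stated bound. Since the main content of the argument is already packaged in Lemma \ref{amount_calls}, there is no real obstacle here; the only care needed is in verifying the ``one multiplication per call'' accounting and in the elementary monotonicity step used to replace $|E'|$ by $\min(|V|^{2},|E|)$.
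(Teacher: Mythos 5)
Your proposal is correct and follows essentially the same approach as the paper: observe that every multiplication is located inside \texttt{ProceedNewVertex} or \texttt{ProceedOldVertex}, count exactly one per call, and invoke Lemma \ref{amount_calls}. You additionally spell out the second inequality (bounding $|E'|\le\min(|V|^2,|E|)$ and checking monotonicity of $x\mapsto x\cdot\min(|V|,x+1)$), which the paper leaves implicit after noting $|E'|<|V|^2$ earlier in the section; this is a harmless and welcome bit of extra rigor, not a different method.
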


\begin{proof}
    Note that the multiplication operation only arises once in both \texttt{ProceedOldVertex} and \texttt{ProceedNewVertex} functions, hence the amount of multiplications is equal to amount of their calls. Therefore Lemma \ref{amount_calls} implies the required inequality.
\end{proof}

\begin{theorem} \label{algo_eq}
    The above algorithm performs no more than 
    $$|E'| \cdot \min(|V|, |E'| + 1) + |E| \leq \min(|V|^2, |E|) \cdot \min(|V|, |E| + 1) + |E|$$
    equality checks in $M$.
\end{theorem}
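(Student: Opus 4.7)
The plan is to split the equality checks performed by the \texttt{CoreAlgorithm} into two independent contributions -- those that occur during the preprocessing routines \texttt{RemoveLoops} and \texttt{RemoveMultipleEdges}, and those that occur inside the DFS loop -- and then bound each contribution separately.

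First, I would inspect the two preprocessing procedures line by line. An equality check in \texttt{RemoveLoops} is performed only for an edge $e$ satisfying $o(e)=t(e)$, and immediately after the check the edge is removed from $G$; similarly, an equality check in \texttt{RemoveMultipleEdges} is performed only when the slot $T[t(e)]$ is already occupied, and again the edge is removed. Hence every check in these two steps can be charged to a distinct edge that is subsequently removed, so their combined number is at most $|E|-|E'|\leq |E|$.

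Next, I would turn to the DFS phase. Looking at \texttt{ProceedNewVertex} and \texttt{ProceedOldVertex}, only the latter performs an equality check (exactly one, on the line \texttt{if $m(u) \neq m(v) l(e)$}), while \texttt{ProceedNewVertex} performs none. Therefore the number of equality checks produced by the DFS phase is at most the total number of invocations of these two functions across all calls \texttt{DFS}$(G,v)$, which is already bounded by Lemma~\ref{amount_calls} by
\[
  |E'| \cdot \min(|V|,|E'|+1).
\]
Adding the two contributions gives the first inequality $|E'|\cdot\min(|V|,|E'|+1)+|E|$ from the statement.

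Finally, I would derive the second inequality by monotonicity. After the two preprocessing steps the reduced graph has no loops and no multiple edges, so $|E'|\leq |V|(|V|-1)\leq |V|^2$; combined with $|E'|\leq |E|$ we obtain $|E'|\leq \min(|V|^2,|E|)$. Likewise $|E'|+1\leq |E|+1$ gives $\min(|V|,|E'|+1)\leq \min(|V|,|E|+1)$. Multiplying these two estimates and re-adding the $|E|$ term yields the bound $\min(|V|^2,|E|)\cdot\min(|V|,|E|+1)+|E|$. No step here is really an obstacle; the only subtle point is making sure that every equality check in the preprocessing can be charged to a removed edge, so that the two sources of checks do not double-count, and this is immediate from the code.
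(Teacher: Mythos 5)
Your proof is correct and takes essentially the same route as the paper: charge each preprocessing equality check to the edge being examined to get the $|E|$ term, observe that DFS equality checks occur only in \texttt{ProceedOldVertex} and apply Lemma~\ref{amount_calls}, then pass from $|E'|$ to $\min(|V|^2,|E|)$ using the fact that the reduced graph has no loops or multiple edges. The only tiny imprecision is your claim that every preprocessing check is charged to a \emph{removed} edge -- a failed check triggers an immediate exit without removal -- but this does not affect the stated bound of $|E|$.
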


\begin{proof}
    First, let us show that number of equality checks inside the functions $\texttt{RemoveLoops}$ and $\texttt{RemoveMultipleEdges}$ calls is no more than $|E|$. Indeed, if $e \in E$ is a loop then it corresponds to exactly one equality check in the line $5$ of $\texttt{RemoveLoops}$ function and then it will be removed. If $e \in E$ is multiple (but not a loop) then it corresponds to no more than one equality check in line $8$ of $\texttt{RemoveMultipleEdges}$ function. Other edges do not arise any equality checks inside these calls. 

    In \texttt{DFS} algorithms, the equality check only arises once in the \texttt{ProceedOldVertex} function, hence the amount of equality checks is equal to amount of its calls, which is no more than $|E'| \cdot \min(|V|, |E'| + 1)$ by Lemma \ref{amount_calls}. 

    Since $|E'| \leq |V|^2$, the total number of equality checks is no more than 
    
    $$|E'| \cdot \min(|V|, |E'| + 1) + |E| \leq \min(|V|^2, |E|) \cdot \min(|V|, |E| + 1) + |E|.$$
\end{proof}

\begin{corollary} \label{main_cor}
    The above algorithm is equivalent to verification of some complete system of relations $\RR$ for $G$ with 
    $$|\RR| \leq \min(|V|^2, |E|) \cdot \min(|V|, |E| + 1) + |E|$$
    and 
    $$m(\RR) \leq \min(|V|^2, |E|) \cdot \min(|V|, |E| + 1).$$
    In particular, such a complete system of relations $\RR$ for $G$ exists.
\end{corollary}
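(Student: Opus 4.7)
The plan is to read off the system of relations $\RR$ directly from the execution of the algorithm on $G$, and then invoke Section~\ref{S6} for completeness together with Theorems~\ref{algo_eq} and~\ref{algo_mult} for the two numerical bounds. Concretely, I would define $\RR$ to consist of every relation whose equality is actually queried by the algorithm: the relations $((e),())$ arising from \texttt{RemoveLoops} on each loop $e$; the relations $((e),(e'))$ arising from \texttt{RemoveMultipleEdges} on each pair of parallel edges; and the relations $(p_u,\, p_v\circ (e))$ arising from each \texttt{ProceedOldVertex} call on an edge $e$ with $o(e)=v$, $t(e)=u$, where $p_w$ denotes the simple DFS-tree path from the current DFS root to $w$. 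The correctness analysis at the end of Section~\ref{S6} established that the algorithm returns \texttt{True} if and only if all of these equalities hold in $M$, so $\RR$ is a complete system of relations for $G$; this already justifies the existence claim at the end of the corollary.

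Next I would bound the two quantities. Every equality check is, by construction, exactly one element of $\RR$, so $|\RR|$ is at most the total number of equality checks, and Theorem~\ref{algo_eq} gives the first inequality. For the multiplicative bound, I would observe that the only multiplication appearing inside \texttt{ProceedNewVertex} or \texttt{ProceedOldVertex} is $m(v)\cdot l(e)$, and by the invariant $m(v)=l(p_v)$ established in Section~\ref{S6} this is precisely the concatenation $p_v\circ (e)$ of two sequences already present: the previously built path $p_v$ and the singleton $(e)\in\mathcal{E}$. Tracing through the algorithm in execution order therefore produces an ascending chain
$$\mathcal{E}=S_0\subseteq S_1\subseteq\cdots\subseteq S_N,$$
where $N$ is the total number of multiplications performed, each $S_i$ is obtained from $S_{i-1}$ by a single concatenation of sequences already in $S_{i-1}$, and $S_N\supseteq S_\RR$ because every sequence appearing in any relation of $\RR$ is either in $\mathcal{E}$ or is some $p_w$ constructed along the way. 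Hence $m(\RR)\le N$, and Theorem~\ref{algo_mult} gives the second inequality.

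The only subtle point I would need to verify carefully is that at the moment \texttt{ProceedOldVertex} is invoked on an edge $e$ from $v$ to $u$, both $p_v$ and $p_u$ are already members of the current set $S_i$, so that the right-hand side of the new relation really is the concatenation of two previously constructed sequences. This is immediate from the DFS recursion order: $p_v$ enters $S$ exactly when DFS enters $v$, and $p_u$ enters $S$ exactly when DFS first enters $u$, both of which happen strictly before the \texttt{ProceedOldVertex} call at $e$ is issued.
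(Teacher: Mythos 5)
Your proposal is correct and matches the paper's own argument: both proofs read $\RR$ off from the algorithm's execution trace, letting each equality check contribute a relation and each monoid multiplication contribute a concatenation step in the chain $\mathcal{E}=S_0\subseteq S_1\subseteq\cdots\subseteq S_N$, and then cite Theorems~\ref{algo_eq} and~\ref{algo_mult} for the numerical bounds and Section~\ref{S6} for completeness. Your writeup is somewhat more explicit than the paper's (naming the relations from each subroutine and verifying the DFS-order invariant that $p_v,p_u\in S$ before the \texttt{ProceedOldVertex} call), but note one small imprecision: the right-hand side $p_v\circ(e)$ of a \texttt{ProceedOldVertex} relation is generally not ``some $p_w$''; it is the stray concatenation produced by that call's own multiplication, which is precisely what the preceding paragraph already accounts for, so the argument goes through.
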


\begin{proof}
    Let us construct a complete system of relations $\RR$ with required properties for a graph $G$ using our algorithm. We start with the set $S = \mathcal{E}$ containing a one-edge sequence $(e)$ for each $e \in E$. At every step the algorithm can verify the equality $l(s) = l(s')$ for some $s, s' \in S$ or multiply $l(s)$ by $l(s')$ for some $s, s' \in S$. In first case, we add the relation $(s, s')$ to $\RR$. In the second case we add $s \circ s'$ to $S$.

    Therefore, Theorems \ref{algo_mult}, \ref{algo_eq} imply the required inequalities.
\end{proof}

\begin{theorem} \label{full_comlexity}
    The total time complexity of the above algorithm is 
    \vspace{-10pt}
    
    $$O \left( \min(|V|^2, |E|) \cdot \min(|V|, |E|) + |E| \right) \cdot T_{\mathrm{equal}}  \ +$$ 
    
    \vspace{-10pt}
    $$\hspace{8pt} O \left( \min(|V|^2, |E|) \cdot \min(|V|, |E|) \right) \cdot T_{\mathrm{multi}} + O(|V|),$$ 
    
\end{theorem}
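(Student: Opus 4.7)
The plan is to separate the running time into three independent contributions: the cost of equality checks, the cost of monoid multiplications, and all remaining basic RAM operations. The first two are handled immediately by the quantitative bounds in Theorems \ref{algo_eq} and \ref{algo_mult}, which contribute exactly $O(\min(|V|^2,|E|) \cdot \min(|V|,|E|) + |E|) \cdot T_{\mathrm{equal}}$ and $O(\min(|V|^2,|E|) \cdot \min(|V|,|E|)) \cdot T_{\mathrm{multi}}$, respectively. So the remainder of the proof is a phase-by-phase accounting of the purely combinatorial work.

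The plan for the RAM-operation bound proceeds phase by phase. \texttt{RemoveLoops} visits each edge once, contributing $O(|V|+|E|)$. \texttt{RemoveMultipleEdges}, if implemented so that the auxiliary array $T$ is allocated once and, for each processed vertex, only the slots actually touched are cleared afterwards, also runs in $O(|V|+|E|)$ time; this is the point where the naive reading of the pseudocode would blow up to $\Theta(|V|^2)$ and must be avoided. The main loop performs $|V|$ DFS startups and then internal work dominated by the number of \texttt{ProceedNewVertex}/\texttt{ProceedOldVertex} calls, each of which does $O(1)$ non-monoid work; by Lemma \ref{amount_calls} this number is at most $|E'| \cdot \min(|V|, |E'|+1)$, and since $|E'| \le \min(|V|^2,|E|)$ the contribution is $O(\min(|V|^2,|E|) \cdot \min(|V|,|E|))$. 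Adding the $O(|V|)$ cost of the DFS startups and the preprocessing cost, the total RAM work is $O(\min(|V|^2,|E|) \cdot \min(|V|,|E|) + |E| + |V|)$.

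Summing the three contributions gives exactly the bound claimed in the statement. The only genuinely delicate point — and the step I would write up with the most care — is the \texttt{RemoveMultipleEdges} analysis: a literal reading of the pseudocode re-initializes a length-$|V|$ array at every vertex, which totals $|V|^2$ writes and can exceed the stated budget when $|E|$ is much smaller than $|V|$. Using one persistent array together with per-vertex selective clearing (or, equivalently, iterating only over vertices with nonzero out-degree) keeps preprocessing at $O(|V|+|E|)$ and lets every term land inside the advertised asymptotic envelope.
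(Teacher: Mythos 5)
Your proposal is correct and follows the same decomposition as the paper's own proof: bound the equality checks and multiplications via Theorems \ref{algo_eq} and \ref{algo_mult} (the paper invokes Corollary \ref{main_cor}), then account for the remaining RAM operations phase by phase, using Lemma \ref{amount_calls} for the DFS loop and absorbing $|E|$ and $|V|$ into the stated bound.

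One place where you are actually more careful than the paper: the paper simply asserts that \texttt{RemoveMultipleEdges} runs in $O(|V|+|E|)$, but the pseudocode as written allocates a fresh length-$|V|$ array $T$ inside the loop over vertices, which a literal implementation would execute in $\Theta(|V|^2)$ time; when $|E|$ is small relative to $|V|$ this is not dominated by any term of the claimed bound, so the theorem genuinely needs the fix you describe (a single persistent array with selective clearing, or iterating only over vertices of positive out-degree). You are right to flag this as the one delicate step; the paper's proof passes over it silently.
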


\begin{proof}
    In view of Corollary \ref{main_cor}, it remains to estimate the complexity of the algorithm without multiplications and equality checks. Let us do it step by step.
    
    \smallskip

    \textbf{\texttt{RemoveLoops} function.} The complexity is $O(|V| + |E|)$.

    \smallskip

    \textbf{\texttt{RemoveMultipleEdges} function.} The complexity is $O(|V| + |E|)$.

    \smallskip

    \textbf{All \texttt{DFS} calls combined.} The complexity is $O(|V|) + O(q)$, where $q$ is a number of calls of $\texttt{RemoveLoops}$ and $\texttt{RemoveMultipleEdges}$ functions, which is no more than 
    $$|E'| \cdot \min(|V|, |E'| + 1) \leq \min(|V|^2, |E|) \cdot \min(|V|, |E| + 1)$$ 
    by Lemma \ref{amount_calls}.

    \smallskip
    
    The total remaining complexity is
    $$O(|V| + |E|) + O(\min(|V|^2, |E|) \cdot \min(|V|, |E| + 1)) =$$
    $$= O(|V|) + O(\min(|V|^2, |E|) \cdot \min(|V|, |E|)).$$
    This concludes the proof.
\end{proof}

Now, the proof of our main upper-bound results is straightforward.

\begin{proof}[Proof of Theorem \ref{upper_bound}]
    Immediately follows from Corollary \ref{main_cor}.
\end{proof}

\begin{proof}[Proof of Theorem \ref{upper_bound_mult}]
    Immediately follows from Corollary \ref{main_cor}.
\end{proof}

\appendix

\section{Proof of Theorem \ref{nu_ge}} \label{appendix}

In order to prove Theorem \ref{nu_ge}, for any $n, m \geq 4$, we explicitly construct a graph $G = (V, E)$ with $|V| = n$ and $|E| = m$, satisfying 
\begin{equation} \label{triploid_relation}
    \Rh(G) + \L(G) \geq 2^{-14} \cdot \left(\min(m, n^2) \cdot \min(m, n) + m\right),
\end{equation}
and
\begin{equation} \label{triploid_second_relation}
\Rh(G) \geq 2^{-14} \cdot \left(\min(m, n^2) \cdot \min(m, n)\right),
\end{equation}
such that $G \cong T(n_1, n_2, n_3, n_0, e)$ for some integers $n_1, n_2, n_3, n_0, e$.

We brake the proof of Theorem \ref{nu_ge} into several cases. 

$\bullet$ $m \leq 16$, or $n \leq 16$ and $m \leq n^2$,

$\bullet$ $16 < m \leq 2 \cdot n - 4$,

$\bullet$ $n, m > 16$ and $2 \cdot n - 4 < m \leq n^2$,

$\bullet$ $n^2 < m$.

There cases are covered by following Lemmas \ref{m16}, \ref{mlown}, \ref{mn}, and \ref{mbign}, respectively. For each of this cases we prove inequalities (\ref{triploid_relation}) and (\ref{triploid_second_relation}) for a special explicitly constructed triploid.

\begin{lemma} \label{m16}
    For any integers $n, m \geq 4$ satisfying either $m \leq 16$, or $n \leq 16$ and $m \leq n^2$, there exists an oriented graph $G = (V, E)$ with $|V| = n$ and $|E| = m$ such that inequalities (\ref{triploid_relation}) and (\ref{triploid_second_relation}) hold and such that $G \cong T(n_1, n_2, n_3, n_0, e)$ for some integers $n_1, n_2, n_3, n_0, e$. 
\end{lemma}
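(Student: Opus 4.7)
My plan is to exhibit a single explicit triploid that uniformly handles the entire parameter range of the lemma. Concretely, I would take
$$G \;=\; T(1,\,2,\,1,\,n-4,\,m).$$
The hypotheses $n, m \geq 4$ guarantee that this is a valid triploid in the sense of Definition \ref{def_rhomb}: indeed $n_0 = n-4 \geq 0$, $n_1 = 1 > 0$, and $e = m \geq 4 = n_2(n_1+n_3)$. By construction $|V| = 1+2+1+(n-4) = n$ and $|E| = m$, so the cardinality constraints are met.

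Next I would read off the two relevant quantities directly from Lemma \ref{triploid_rhombs}: one gets $\Rh(G) \geq n_1 n_3 \lfloor n_2/2\rfloor = 1$ and $\L(G) = m - n_2(n_1+n_3) = m - 4 \geq 0$. In particular both $\Rh(G)$ and $\Rh(G)+\L(G)$ are integers that are at least $1$.

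The decisive observation is that, under the hypotheses of Lemma \ref{m16}, the right-hand sides of \eqref{triploid_relation} and \eqref{triploid_second_relation} are strictly less than $1$. Indeed, if $m \leq 16$ then $\min(m,n^2)\cdot\min(m,n) \leq m^2 \leq 256$; if instead $n \leq 16$ and $m \leq n^2$, then $\min(m,n^2)\cdot\min(m,n) \leq mn \leq n^3 \leq 16^3 = 4096$. In either subregime $m \leq 256$ as well, so I would conclude
$$2^{-14}\bigl(\min(m,n^2)\cdot\min(m,n) + m\bigr) \;\leq\; \frac{4096+256}{2^{14}} \;<\; 1, \qquad 2^{-14}\bigl(\min(m,n^2)\cdot\min(m,n)\bigr) \;\leq\; \tfrac{1}{4}.$$
Since $\Rh(G)+\L(G) \geq 1$ and $\Rh(G) \geq 1$ are nonnegative integers bounded below by $1$, both \eqref{triploid_relation} and \eqref{triploid_second_relation} follow immediately.

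The only real obstacle here is bookkeeping: one has to verify subregime by subregime that $\min(m,n^2)\cdot\min(m,n)$ stays below $2^{14}$. No genuine combinatorial idea is required, because the constant $2^{-14}$ is so small in this low-parameter range that a single rhomboid already suffices to certify both inequalities.
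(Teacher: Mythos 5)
Your proof is correct and matches the paper's own argument essentially verbatim: you pick the same triploid $T(1,2,1,n-4,m)$, observe it has at least one rhomboid, and note that in the given parameter regime the right-hand sides of both target inequalities are strictly below $1$, so a single rhomboid suffices. Your bookkeeping of the bounds ($m^2 \leq 256$ in the first subcase, $mn \leq n^3 \leq 4096$ in the second, with $m \leq 256$ throughout) is actually slightly more explicit than the paper's terse justification, but it is the same computation.
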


\begin{proof}

    Let us take \(G = T(1, 2, 1, n - 4, m)\). It is well-defined since $m \ge 4 = 2 \cdot (1 + 1)$. Note that $G$ contains exactly one rhomboid. 
    Hence we obtain
    \begin{equation} \label{eq:m16}
        \Rh(G) = 1 \geq 2^{-14} \cdot \left(256 \cdot 16 + 256\right) \geq 2^{-14} \cdot \left(\min(m, n^2) \cdot \min(m, n) + m\right).
    \end{equation}
    The last inequality in (\ref{eq:m16}) holds, since we have $m \le 256$. Both inequalities (\ref{triploid_relation}) and (\ref{triploid_second_relation}) automatically follow from (\ref{eq:m16}).
\end{proof}

\begin{lemma} \label{mlown}
    For any integers $n, m \geq 4$ satisfying $16 < m \leq 2 \cdot n - 4$ there exists an oriented graph $G = (V, E)$ with $|V| = n$ and $|E| = m$ such that inequalities (\ref{triploid_relation}) and (\ref{triploid_second_relation}) hold and such that $G \cong T(n_1, n_2, n_3, n_0, e)$ for some integers $n_1, n_2, n_3, n_0, e$. 
\end{lemma}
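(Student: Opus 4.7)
The plan is to exhibit a single explicit triploid that realizes both target inequalities uniformly across the regime $16 < m \leq 2n-4$. My choice is
$$G := T\!\Bigl(\lfloor m/4\rfloor,\; 2,\; \lfloor m/4\rfloor,\; n - 2 - 2\lfloor m/4\rfloor,\; m\Bigr).$$
The motivation is that Lemma \ref{triploid_rhombs} gives $\Rh(T(n_1,n_2,n_3,n_0,e)) \geq n_1 n_3 \lfloor n_2/2\rfloor$, so setting $n_1 = n_3$ and $n_2 = 2$ converts essentially all bipartite edges into rhomboid-contributing pairs. The hypothesis $m \leq 2n-4$ is precisely what permits two ``outer'' layers of size $\lfloor m/4\rfloor$ together with the middle layer of size $2$ to fit inside $n$ vertices.

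First I would verify admissibility of the parameters. Since $m > 16$, we have $n_1 = \lfloor m/4\rfloor \geq 4 > 0$. The constraint $e \geq n_2(n_1+n_3)$ reads $m \geq 4\lfloor m/4\rfloor$, which is immediate. The non-negativity $n_0 \geq 0$ reduces to $2\lfloor m/4\rfloor \leq n-2$, which follows from the chain
$$2\lfloor m/4\rfloor \;\leq\; m/2 \;\leq\; (2n-4)/2 \;=\; n-2.$$
A direct count of vertices and edges then yields $|V| = n$ and $|E| = m$, and by construction $G \cong T(n_1,n_2,n_3,n_0,e)$ for the above parameters.

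Next I would bound the number of disjoint rhomboids using Lemma \ref{triploid_rhombs}:
$$\Rh(G) \;\geq\; \lfloor m/4\rfloor \cdot \lfloor m/4\rfloor \cdot \lfloor 2/2\rfloor \;=\; \lfloor m/4\rfloor^2 \;\geq\; (m/8)^2 \;=\; m^2/64,$$
where the estimate $\lfloor m/4\rfloor \geq m/8$ is elementary and valid for $m \geq 8$, hence for our $m > 16$.

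Finally I would verify (\ref{triploid_relation}) and (\ref{triploid_second_relation}). Since $n \geq 4$ forces $n^2 > 2n-4 \geq m$, we have $\min(m,n^2) = m$. In either sub-case $m \leq n$ or $n < m \leq 2n-4$, the quantity $\min(m,n^2)\cdot\min(m,n) + m$ is bounded by $m\cdot m + m \leq 2m^2$ or by $mn + m \leq 2mn \leq 2m^2$, respectively. Consequently, the right-hand side of (\ref{triploid_relation}) is at most $m^2/2^{13}$ and that of (\ref{triploid_second_relation}) is at most $m^2/2^{14}$; both are comfortably dominated by $m^2/64 = m^2/2^{6} \leq \Rh(G) \leq \Rh(G) + \L(G)$. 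There is no genuine obstacle here; the only subtlety is the integer bookkeeping at the boundary case $m = 2n-4$, which is settled by the floor inequality above.
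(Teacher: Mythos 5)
Your proof is correct and takes essentially the same approach as the paper: the identical triploid $T(\lfloor m/4\rfloor, 2, \lfloor m/4\rfloor, n-2\lfloor m/4\rfloor-2, m)$, the same appeal to Lemma \ref{triploid_rhombs} giving $\Rh(G)\geq\lfloor m/4\rfloor^2\geq m^2/64$, and the same admissibility checks. The only difference is cosmetic arithmetic in the final comparison (you split into sub-cases to reach $2m^2$, whereas the paper simply bounds $\min(m,n^2)\cdot\min(m,n)+m\leq m^2+m$ and checks $(m^2+m)/128\leq m^2/64$); both close the gap with room to spare.
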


    \begin{proof}
    Let us take \(G = T\left(\left\lfloor \frac{m}{4} \right\rfloor, 2, \left\lfloor \frac{m}{4} \right\rfloor, n - 2 \cdot \left\lfloor \frac{m}{4} \right\rfloor - 2, m\right)\). 
    It is well-defined since
    $$m \ge 2 \cdot \left( \left\lfloor \frac{m}{4} \right\rfloor + \left\lfloor \frac{m}{4} \right\rfloor \right).$$
    Also note that $n - 2 \cdot \left\lfloor \frac{m}{4} \right\rfloor - 2 \geq 0$ since $m \leq 2 \cdot n - 4$.
    
    By Lemma \ref{triploid_rhombs}, $G$ contains at least \(\left\lfloor \frac{m}{4} \right\rfloor^2\) disjointed rhomboids. Hence we obtain
    \begin{subequations}\label{eq:mlown}
      \makeatletter
      \renewcommand{\theequation}{\arabic{parentequation}.\arabic{equation}}
      \makeatother
    
      \begin{align}
        \Rh(G) & \geq \left\lfloor \frac{m}{4} \right\rfloor^2 \notag \\
        & \geq \frac{(m - 3)^2}{16} \notag \\
        & \geq \frac{m^2}{64} \label{eq:mlown:1} \\ 
        &\geq \frac{m^2 + m}{128} \label{eq:mlown:2} \\
        &\geq \frac{\min(m, n^2) \cdot \min(m, n) + m}{128} \notag \\
        &\geq 2^{-14} \cdot \left(\min(m, n^2) \cdot \min(m, n) + m\right). \notag
      \end{align}
    \end{subequations}
    Here (\ref{eq:mlown:1}) and (\ref{eq:mlown:2}) follows from the assumption $m > 16$.
    Both inequalities (\ref{triploid_relation}) and (\ref{triploid_second_relation}) automatically follow from (\ref{eq:mlown}).
    \end{proof}

\begin{lemma} \label{mn}
    For any integers $n, m > 16$ satisfying $2 \cdot n - 4 < m \leq n^2$ there exists an oriented graph $G = (V, E)$ with $|V| = n$ and $|E| = m$ such that inequalities (\ref{triploid_relation}) and (\ref{triploid_second_relation}) hold and such that $G \cong T(n_1, n_2, n_3, n_0, e)$ for some integers $n_1, n_2, n_3, n_0, e$. 
\end{lemma}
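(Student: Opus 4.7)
The plan is to exhibit a triploid whose three column dimensions balance so that the rhomboid lower bound $n_1 n_3 \lfloor n_2/2 \rfloor$ from Lemma \ref{triploid_rhombs} matches a constant fraction of $\min(m, n^2) \cdot \min(m, n) = mn$ (the equality holds throughout the range of Lemma \ref{mn}). Concretely, I would set
\[
  n_1 = n_3 = \left\lfloor \frac{n}{4} \right\rfloor,
  \qquad
  n_2 = \min\!\left( \left\lfloor \frac{m}{2\lfloor n/4 \rfloor} \right\rfloor,\; n - 2\left\lfloor \frac{n}{4}\right\rfloor \right),
\]
together with $n_0 = n - n_1 - n_2 - n_3$ and $e = m$, and take $G = T(n_1, n_2, n_3, n_0, e)$.

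First I would verify admissibility of this triploid: $n_1 > 0$ is immediate from $n > 16$, and the outer minimum in the definition of $n_2$ is, by design, the largest value compatible with both $n_2(n_1 + n_3) \leq m$ and $n_0 \geq 0$. The only subtle point is to check $n_2 \geq 2$ (needed so that $\lfloor n_2/2 \rfloor \geq 1$): this is where the assumption $m > 2n - 4$ enters, since it forces $m/(2 \lfloor n/4 \rfloor) \geq 2m/n > 3$ for $n > 16$, while $n - 2 \lfloor n/4 \rfloor \geq \lfloor n/2 \rfloor \geq 8$.

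Next I would apply Lemma \ref{triploid_rhombs} in two subcases according to which argument attains the outer minimum. In the edge-binding subcase $n_2 = \lfloor m/(2 \lfloor n/4 \rfloor) \rfloor$, the bounds $\lfloor n/4 \rfloor \geq n/8$ and $\lfloor n_2/2 \rfloor \geq m/n - O(1)$ combine to give $\Rh(G) \gtrsim (n/8)^2 \cdot (m/n)$, which is a uniform fraction of $mn$ once one uses $m > 2n - 4$ to absorb the additive corrections. In the vertex-binding subcase $n_2 = n - 2 \lfloor n/4 \rfloor \geq \lfloor n/2 \rfloor$, the same estimates yield $\Rh(G) \gtrsim n^3 \geq mn$ since $m \leq n^2$. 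A direct inspection shows that the constant hidden in $\gtrsim$ is at least $2^{-9}$, well above the demanded $2^{-14}$.

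With the rhomboid bound in hand, both inequalities are immediate: (\ref{triploid_second_relation}) is precisely $\Rh(G) \geq 2^{-14} mn$, and (\ref{triploid_relation}) follows from the same bound together with $mn + m \leq 2mn$, absorbing the extra factor of $2$ into the slack. The main obstacle I anticipate is the arithmetic bookkeeping of floor functions and the $O(1)$ corrections in the edge-binding subcase, particularly near the threshold $m \approx 2n - 4$ where $\lfloor n_2/2 \rfloor$ can be as small as $1$ or $2$; however, the generous cushion between the $2^{-9}$ naturally delivered by the construction and the $2^{-14}$ claimed in the statement renders these corrections routine rather than delicate.
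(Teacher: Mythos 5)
Your construction is correct and gives the required bound, but it is genuinely different from the one in the paper. Both proofs instantiate a triploid $T(n_1,n_2,n_3,n_0,m)$ with $n_1=n_3$ and invoke Lemma~\ref{triploid_rhombs}; the difference lies in how the parameters are chosen. The paper first defines $s=\sqrt{\max(0,n^2-4m)}$ and $t=\lceil (n-s)/2\rceil$, then sets $n_1=n_3=\lfloor(n-t)/4\rfloor$ and $n_2=t$; the quadratic $s$ is the exact solution to a balancing problem for $n_1n_3\lfloor n_2/2\rfloor$ under the two constraints $2n_1n_2\le m$ and $2n_1+n_2\le n$, and the case split is on the sign of $n^2-4m$. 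Your choice instead freezes $n_1=n_3=\lfloor n/4\rfloor$ and lets $n_2$ be the largest value permitted by the two constraints, splitting on which constraint binds. Your route avoids the square root and the associated algebraic manipulations (the long chain (A.5.1)--(A.5.12) in the paper), at the cost of not squeezing the constant as hard; as you note, you still clear $2^{-9}$ with room to spare, well within the paper's $2^{-14}$. One detail worth making explicit when you flesh this out: with $n>16$ and $m>2n-4$ you in fact get $n_2\ge 3$ (since $m/(2\lfloor n/4\rfloor)\ge 2m/n>3$), which lets you use the cleaner inequality $\lfloor n_2/2\rfloor\ge n_2/3$ and sidestep the $O(1)$ fuzz you flagged. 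With that, the edge-binding case gives $\Rh(G)\ge (n/8)^2\cdot n_2/3\ge nm/192$ and the vertex-binding case gives $\Rh(G)\ge n^3/512\ge nm/512$, so both inequalities (\ref{triploid_relation}) and (\ref{triploid_second_relation}) follow via $mn+m\le 2mn$ exactly as you say.
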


\begin{proof}
    Let us take \(G = T\left(\left\lfloor \frac{n - t}{4} \right\rfloor, t, \left\lfloor \frac{n - t}{4} \right\rfloor, n - 2 \cdot \left\lfloor \frac{n - t}{4} \right\rfloor - t, m \right)\) where
    \[
    s = \sqrt{\max(0, n^2 - 4m)}
    \]
    and
    \[
    t = \left\lceil \frac{n - s}{2} \right\rceil.
    \]
    Note that $n - 2 \cdot \left\lfloor \frac{n - t}{4} \right\rfloor - t \geq 0$ since

    \begin{subequations}\label{eq:chain}
      \makeatletter
      \renewcommand{\theequation}{\arabic{parentequation}.\arabic{equation}}
      \makeatother

      \begin{align}
          n - 2 \cdot \left\lfloor \frac{n - t}{4} \right\rfloor - t & \geq n - \left\lfloor \frac{n - t}{2} \right\rfloor - t \notag \\ & \geq n - (n - t) - t \notag \\ 
          & = 0. \notag
      \end{align}
    \end{subequations}
    We also need that ${s^2 < (n - 2)^2}$. \newline
    \textbf{Case 1.} $n^2 - 4m \leq 0$.
    $$s^2 = \max(0, n^2 - 4m) = 0 < (n - 2)^2.$$
    \textbf{Case 2.} $n^2 - 4m > 0$.
    
    \begin{subequations}\label{eqs2:chain}
      \makeatletter
      \renewcommand{\theequation}{\arabic{parentequation}.\arabic{equation}}
      \makeatother

      \begin{align}
            s^2 = \max(0, n^2 - 4m) & 
            = n^2 - 4m \notag \\ 
            &<  n^2 - 4(n - 1) \label{eqs2:chain:1} \\
            & = (n - 2)^2.
      \end{align}
    \end{subequations}

    Note that (\ref{eqs2:chain:1}) holds since by assumption $m > 2 \cdot n - 4 > n - 1$ because $n \geq 4$. Therefore, $s < n - 2$ and

    \begin{equation}
        \label{ns_geq_2}
        \dfrac{n - s}{2} > 1.
    \end{equation}
    
    Let us show that $G$ is well-defined. Indeed, we have
    \begin{subequations}\label{eq:chain}
      \makeatletter
      \renewcommand{\theequation}{\arabic{parentequation}.\arabic{equation}}
      \makeatother

      \begin{align}
          t \cdot \left( \left\lfloor \frac{n - t}{4} \right\rfloor + \left\lfloor \frac{n - t}{4} \right\rfloor \right) & \leq t \cdot \frac{(n - t)}{2} = \frac{n \cdot t - t^2}{2} \notag \\
          & = \dfrac{1}{2} \cdot \left( n \cdot \left\lceil \frac{n - s}{2} \right\rceil - \left\lceil \frac{n - s}{2} \right\rceil^2 \right) \notag \\
          & \leq \dfrac{1}{2} \cdot \left( n \cdot \left\lceil \frac{n - s}{2} \right\rceil - \left( \frac{n - s}{2} \right) ^2 \right) \notag \\
          & = \dfrac{1}{2} \cdot  n \cdot \left\lceil \frac{n - s}{2} \right\rceil - \dfrac{1}{2} \cdot \left( \frac{n - s}{2} \right) ^2  \notag \\
          & \leq n \cdot \dfrac{1}{2} \cdot \left\lceil \frac{n - s}{2} \right\rceil - \left( \frac{n - s}{2} \right) ^2  \notag \\
          & \leq n \cdot \frac{n - s}{2}  - \left( \frac{n - s}{2} \right) ^2 \label{eqnt:1} \\
          & =  \frac{n^2 - s\cdot n}{2} -  \frac{n^2 + s^2 - 2\cdot n \cdot s}{4} \notag \\
          & = \frac{n^2 - 2\cdot s \cdot n - (n^2 - 4m) + 2\cdot n \cdot s}{4} \notag \\ 
          & = \frac{2\cdot n \cdot s - 2 \cdot s \cdot n + 4m}{4} = \frac{4m}{4}\notag \\
          & = m \notag .
      \end{align}
    \end{subequations} 

    Note that inequality (\ref{eqnt:1}) holds since

    \begin{subequations}\label{eqs2:chain}
      \makeatletter
      \renewcommand{\theequation}{\arabic{parentequation}.\arabic{equation}}
      \makeatother

      \begin{align}
            \dfrac{1}{2} \cdot \left\lceil \frac{n - s}{2} \right\rceil & 
            \leq \dfrac{1}{2} \cdot \left( \dfrac{n - s}{2} + 1 \right) \notag \\
            & = \dfrac{1}{2} \cdot \dfrac{n - s}{2} + \dfrac{1}{2} \notag \\
            & \leq \dfrac{1}{2} \cdot \dfrac{n - s}{2} + \dfrac{1}{2} \cdot \dfrac{n - s}{2} \label{eqns2}
            \\
            & = \dfrac{n - s}{2}. \notag
      \end{align}
    \end{subequations}

    The relation (\ref{eqns2}) holds because $\dfrac{n - s}{2} > 1$ by inequality (\ref{ns_geq_2}).

    By Lemma \ref{triploid_rhombs}, we have at least $\left\lfloor \frac{n - t}{4} \right\rfloor^2 \cdot \left\lfloor \frac{t}{2} \right\rfloor$ rhomboids. 
    
    \textbf{Case 1.} \(n^2 - 4m \geq 1\).  
    
     \begin{subequations}\label{eq:chain}
      \makeatletter
      \renewcommand{\theequation}{\arabic{parentequation}.\arabic{equation}}
      \makeatother
    
      \begin{align}
        \Rh(G) \geq \left\lfloor \frac{n - t}{4} \right\rfloor^2 \cdot \left\lfloor \frac{t}{2} \right\rfloor
        &\ge \left(\frac{n - t}{8}\right)^2 \cdot \frac{t}{4}
          = \frac{(n - t)^2 \cdot t}{256}
           \label{eq:chain:22}\\
        & =  \frac{(n - \left\lceil \frac{n - s}{2} \right\rceil)^2 \cdot \left\lceil \frac{n - s}{2} \right\rceil}{256}
          \notag \\
        & \geq  \frac{(n - \left\lceil \frac{n - s}{2} \right\rceil)^2 \cdot \frac{n - s }{2}}{256}
          \notag\\
        &\geq \frac{\bigl(n - \tfrac{n - s + 2}{2}\bigr)^2 \cdot \tfrac{n - s}{2}}{256}
          \notag\\
        &= \frac{\bigl(\tfrac{n + s - 2}{2}\bigr)^2 \cdot \tfrac{n - s}{2}}{256}
          \notag\\
        &= \frac{(n + s - 2)^2 (n - s)}{2048}
          \notag\\
        &\ge \frac{(n + s)^2 (n - s)}{2^{13}}
          = \frac{(n + s)(n^2 - s^2)}{2^{13}}
          \notag \\
        &= \frac{(n + \sqrt{\max(0, n^2 - 4m)})(n^2 - \max(0, n^2 - 4m))}{2^{13}}
      \notag\\
      &= \frac{(n + \sqrt{n^2 - 4m})(n^2 - n^2 + 4m)}{2^{13}}
      \notag\\
      &= \frac{(n + \sqrt{n^2 - 4m}) \cdot 4m}{2^{13}}
      \notag\\
    &= \frac{n\,m + m\,\sqrt{n^2 - 4m}}{2^{11}}
      \notag\\
    &\ge \frac{n\,m + m}{2^{11}}
      \label{eq:chain:1}\\
    &\ge 2^{-14} \cdot \left(\min(m, n^2) \cdot \min(m, n) + m\right).
       \notag
    \end{align}
    \end{subequations}

    Inequality (\ref{eq:chain:22}) holds since 
         \begin{subequations}\label{eq2:chain}
      \makeatletter
      \renewcommand{\theequation}{\arabic{parentequation}.\arabic{equation}}
      \makeatother

        \begin{align}
            n - t & = n - \left\lceil \frac{n - s}{2} \right\rceil \notag \\ & \geq n - \left\lceil \frac{n}{2} \right\rceil \notag \\ & = \left\lfloor \frac{n}{2} \right\rfloor \notag \\ & \geq 8. \notag
        \end{align}  
    \end{subequations}

     Inequality (\ref{eq:chain:1}) holds, since by assumption of this case we have \(n^2 - 4m \geq 1\). 
    
    \textbf{Case 2.} \(n^2 - 4m < 1\). 
    
    Since $n^2 - 4m$ is an integer, that means that $n^2 - 4m \leq 0$, therefore \(t = \left\lceil \frac{n}{2} \right\rceil \):

     \begin{subequations}\label{eq2:chain}
      \makeatletter
      \renewcommand{\theequation}{\arabic{parentequation}.\arabic{equation}}
      \makeatother

        \begin{align}
            \Rh(G) \geq \left\lfloor \frac{n - t}{4} \right\rfloor^2 \cdot \left\lfloor \frac{t}{2} \right\rfloor
        &\ge \left(\frac{n - t}{8}\right)^2 \cdot \frac{t}{4}
          = \frac{(n - t)^2 \cdot t}{256}
          \notag\\
            & = \frac{(n - \left\lceil \frac{n}{2} \right\rceil)^2 \cdot \left\lceil \frac{n}{2} \right\rceil}{256} \notag \geq \frac{(n - \left\lceil \frac{n}{2} \right\rceil)^2 \cdot  \frac{n}{2}}{256} \notag \\
            & \geq \frac{(n - \frac{n + 1}{2} )^2 \cdot  \frac{n}{2}}{256} \notag = \frac{(\frac{n - 1}{2} )^2 \cdot  \frac{n}{2}}{256} \notag \\
            & = \frac{(n - 1)^2 \cdot n}{2048} \notag \\
            &\ge \frac{n^3}{2^{13}} \notag \\
            &\ge \frac{n \cdot m}{2^{13}} \label{eq2:chain:3} \\
            &\geq \frac{n \cdot m + m}{2^{14}} \label{eq2:chain:4} \\ 
            &\geq  2^{-14} \cdot \left(\min(m, n^2) \cdot \min(m, n) + m\right). \notag
        \end{align}  
    \end{subequations}

    Inequality (\ref{eq2:chain:3}) follows from the assumption $n^2 \ge m$. Inequality (\ref{eq2:chain:4}) holds since $n \ge 1$ and, therefore, $\frac{n \cdot m}{2} \ge \frac{m}{2}$. 

    Both inequalities (\ref{triploid_relation}) and (\ref{triploid_second_relation}) automatically follow from (\ref{eq:mlown}).
\end{proof}

\begin{lemma} \label{mbign}
    For any integers $n, m \geq 4$ satisfying $n^2 < m$ there exists an oriented graph $G = (V, E)$ with $|V| = n$ and $|E| = m$ such that inequalities (\ref{triploid_relation}) and (\ref{triploid_second_relation}) hold and such that $G \cong T(n_1, n_2, n_3, n_0, e)$ for some integers $n_1, n_2, n_3, n_0, e$.
\end{lemma}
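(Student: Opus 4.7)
The plan is to construct an explicit triploid whose parameters balance the rhomboid count against the loop count. In the regime $n^2 < m$, we have $\min(m,n^2)\cdot\min(m,n) = n^3$, so the target lower bounds are $\Rh(G) \geq 2^{-14}\,n^3$ and $\Rh(G)+\L(G) \geq 2^{-14}(n^3 + m)$. The natural strategy is to make $n_1, n_2, n_3$ each of order $n/4$, which saturates the rhomboid count from Lemma \ref{triploid_rhombs}, and to dump all excess edges as loops at the vertex $v_1^1$, which is feasible precisely because $m$ outstrips $n^2$.

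Concretely, I would set
\[
G = T\bigl(\lfloor n/4 \rfloor,\; 2\lfloor n/4 \rfloor,\; \lfloor n/4 \rfloor,\; n - 4\lfloor n/4 \rfloor,\; m\bigr).
\]
Well-definedness is easy: the four parts sum to $n$, the offset $n_0 = n - 4\lfloor n/4 \rfloor$ is nonnegative, $n_1 = \lfloor n/4 \rfloor \geq 1$ since $n \geq 4$, and the constraint $e \geq n_2(n_1+n_3) = 4\lfloor n/4 \rfloor^2 \leq n^2/4 < m$ holds by the case hypothesis $n^2 < m$.

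By Lemma \ref{triploid_rhombs}, $\Rh(G) \geq \lfloor n/4 \rfloor^3$. Using the elementary bound $\lfloor n/4 \rfloor \geq n/8$, which is valid for every $n \geq 4$ (it is trivial for $4 \leq n \leq 7$, where the floor equals $1$, and for $n \geq 8$ it follows from $\lfloor n/4 \rfloor \geq n/4 - 1 \geq n/8$), we obtain
\[
\Rh(G) \;\geq\; n^3/2^{9} \;\geq\; 2^{-14}\cdot n^3 \;=\; 2^{-14}\cdot \min(m,n^2)\cdot\min(m,n),
\]
which establishes (\ref{triploid_second_relation}). For the loop count, the same lemma gives $\L(G) = m - 4\lfloor n/4 \rfloor^2 \geq m - n^2/4$, and since $n^2 < m$ we deduce $\L(G) > 3m/4$. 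Adding the two bounds,
\[
\Rh(G) + \L(G) \;>\; 2^{-9}\,n^3 + \tfrac{3}{4}m \;\geq\; 2^{-14}(n^3 + m),
\]
which is (\ref{triploid_relation}).

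There is no substantive obstacle here; the entire lemma reduces to choosing triploid parameters that simultaneously realize a cubic number of disjoint rhomboids and leave a positive proportion of the $m$ edges available as loops. The ratio $n_1 : n_2 : n_3 = 1 : 2 : 1$ is essentially forced: it keeps $n_2$ even so that $\lfloor n_2/2\rfloor = n_2/2$ is maximized, while ensuring $n_2(n_1 + n_3) \leq n^2/4$, which is strictly below $m$ by hypothesis and therefore frees up at least $3m/4$ edges to become loops.
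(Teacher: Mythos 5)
Your construction is correct and follows essentially the same strategy as the paper's proof: choose a triploid with $n_1 \approx n_3 \approx n/4$ and $n_2 \approx n/2$ so that Lemma~\ref{triploid_rhombs} yields $\Theta(n^3)$ disjoint rhomboids, and let the remaining $m - \Theta(n^2)$ edges (a $\Theta(m)$ quantity since $m > n^2$) be loops. The only difference is a cosmetic one in parameterization -- you set $n_2 = 2\lfloor n/4\rfloor$ rather than $\lfloor n/2\rfloor$, which makes $\lfloor n_2/2\rfloor = \lfloor n/4\rfloor$ exact and slightly cleans up the constant-tracking, but the bounds and the argument are otherwise the same.
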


\begin{proof}
    Let us take \( G = T\left(\left\lfloor \frac{n}{4} \right\rfloor, \left\lfloor \frac{n}{2} \right\rfloor, \left\lfloor \frac{n}{4} \right\rfloor, n - 2\cdot  \left\lfloor \frac{n}{4} \right\rfloor- \left\lfloor \frac{n}{2} \right\rfloor, m \right)\). It is well-defined since
    $$m > n^2 \ge \left\lfloor \frac{n}{4} \right\rfloor \cdot \left\lfloor \frac{n}{2} \right\rfloor \cdot 2.$$  
    Then 
    \begin{subequations}\label{eq3:chain}
      \makeatletter
      \renewcommand{\theequation}{\arabic{parentequation}.\arabic{equation}}
      \makeatother
      
        \begin{align}
        \L(G) = m - 2 \cdot \left\lfloor \frac{n}{4} \right\rfloor \cdot \left\lfloor \frac{n}{2} \right\rfloor & \geq m - 2\cdot \frac{n}{4} \cdot \frac{n}{2} \notag \\ 
        & = m - \frac{n^2}{4} \notag \\
        & \geq m - \frac{n^2}{2} = \frac{2m - n^2}{2} \notag \\
        & \geq \frac{m}{2}. \label{eq3:chain:1}
        \end{align}
    \end{subequations}
    Note that inequality (\ref{eq3:chain:1}) holds since we have $m > n^2$ by assumption. Also, we have

    \begin{subequations}\label{eq5:chain}
      \makeatletter
      \renewcommand{\theequation}{\arabic{parentequation}.\arabic{equation}}
      \makeatother
      
        \begin{align}
        \Rh(G) 
        &\geq \left\lfloor \frac{n}{4} \right\rfloor^2 \cdot \left\lfloor \frac{\left\lfloor \frac{n}{2} \right\rfloor}{2} \right\rfloor \notag \\ 
        &\geq \left( \frac{n}{8} \right) ^2 \cdot \frac{\left\lfloor \frac{n}{2} \right\rfloor - 1}{2} \label{qechain:3} \\ 
        &\geq \left( \frac{n}{8} \right) ^2 \cdot \frac{\left\lfloor \frac{n}{2} \right\rfloor}{4} \geq \left( \frac{n}{8} \right) ^2 \cdot \frac{ n - 1 }{8}   \notag \\ 
        & \geq \left( \frac{n}{8} \right) ^2 \cdot \frac{ n }{16}   \notag \\ 
        &= \frac{n^3}{1024} \notag \\
        & \geq 2^{-14} \cdot \left(\min(m, n^2) \cdot \min(m, n)\right). \label{eq5:chain:1}
        \end{align}
    \end{subequations}
    Condition (\ref{qechain:3}) holds because $n \geq 4$. inequality (\ref{eq5:chain:1}) holds, since $m > n^2$. This proves the inequality (\ref{triploid_second_relation}).

    Finally, let us show that inequality  (\ref{triploid_relation}) holds.

        \begin{subequations}\label{eq5:chain}
      \makeatletter
      \renewcommand{\theequation}{\arabic{parentequation}.\arabic{equation}}
      \makeatother
      
        \begin{align}
        \L(G) + \Rh(G) 
        &\geq \frac{m}{2} + 2^{-14} \cdot \left(\min(m, n^2) \cdot \min(m, n)\right) \notag \\
        & \geq 2^{-14} \cdot \left(\min(m, n^2) \cdot \min(m, n) + m\right).
        \end{align}
    \end{subequations}
\end{proof}

Now we are able to finish the proof of the main result of this section.

\begin{proof}[Proof of Theorem \ref{nu_ge}]
    Lemmas \ref{m16}, \ref{mlown}, \ref{mn} and \ref{mbign} prove inequalities (\ref{triploid_relation}) and (\ref{triploid_second_relation}) for all possible cases of positive $n$ and $m$. Therefore, the statement of Theorem \ref{nu_ge} is true with constant $C = 2^{-14}$.
\end{proof}

\end{document}